\documentclass[10pt]{article}
\usepackage{amsfonts}
\usepackage[leqno]{amsmath}
\usepackage{graphicx}
\usepackage{latexsym}
\usepackage{amsmath,amsfonts,amssymb,amsthm,mathrsfs,euscript,makeidx,color}
\usepackage{enumerate}
\allowdisplaybreaks[1]
\usepackage{multirow}

\oddsidemargin  = 0pt \evensidemargin = 0pt \marginparwidth = 1in
\marginparsep   = 0pt \leftmargin     = 1.25in \topmargin =0pt
\headheight     = 0pt \headsep        = 0pt \topskip =0pt
\footskip       =0.25in \textheight     = 9in \textwidth      =
6.5in

\def\sqr#1#2{{\vcenter{\vbox{\hrule height.#2pt
				\hbox{\vrule width.#2pt height#1pt \kern#1pt \vrule width.#2pt}
				\hrule height.#2pt}}}}
\def\signed #1{{\unskip\nobreak\hfil\penalty50
		\hskip2em\hbox{}\nobreak\hfil#1
		\parfillskip=0pt \finalhyphendemerits=0 \par}}
\def\endpf{\signed {$\sqr69$}}

\def\3n{\negthinspace \negthinspace \negthinspace }
\def\2n{\negthinspace \negthinspace }
\def\1n{\negthinspace }
\def\bel{\begin{equation}\label}
	\def\eel{\end{equation}}

\def\dbE{\mathbb{E}}
\def\dbF{\mathbb{F}}

\def\dbH{\mathbb{H}}

\def\dbP{\mathbb{P}}

\def\dbR{\mathbb{R}}
\def\dbS{\mathbb{S}}
\def\dbT{\mathbb{T}}

\def\dbX{\mathbb{X}}
\def\dbY{\mathbb{Y}}
\def\dbZ{\mathbb{Z}}

\def\sD{\mathscr{D}}

\def\sR{\mathscr{R}}
\def\sS{\mathscr{S}}

\def\sU{\mathscr{U}}


\def\fU{\mathfrak{U}}

\def\={\buildrel \triangle \over =}

\def\ds{\displaystyle}

\def\ns{\noalign{\ss}}
%
%
\def\a{\alpha}
\def\b{\beta }
\def\g{\gamma}
\def\d{\delta}
\def\e{\varepsilon}
\def\z{\zeta}

\def\l{\lambda}
\def\m{\mu}
\def\n{\nu}
\def\si{\sigma}
\def\t{\tau}
\def\f{\varphi}
\def\th{\theta}
\def\o{\omega}

\def\i{\infty}
%
%

\def\G{\Gamma}

\def\Th{\Theta}
\def\L{\Lambda}
\def\Si{\Sigma}
\def\F{\Phi}
\def\O{\Omega}

\def\BTh{{\bf\Theta}}
%

\def\cD{{\cal D}}

\def\cF{{\cal F}}

\def\cJ{{\cal J}}
\def\cK{{\cal K}}
\def\cL{{\cal L}}

\def\cQ{{\cal Q}}
\def\cR{{\cal R}}
\def\cS{{\cal S}}

%
%

%

\def\Bq{{\bf q}}
\def\Br{{\bf r}}

\def\BBxi{\boldsymbol\xi}

\def\G{\Gamma}

\def\Th{\Theta}
\def\L{\Lambda}
\def\Si{\Sigma}
\def\F{\varPhi}
\def\O{\Omega}

\def\BTh{\boldsymbol\Theta}

\def\no{\noindent}

\def\ss{\smallskip}
\def\ms{\medskip}
\def\bs{\bigskip}
\def\q{\quad}
\def\qq{\qquad}
\def\hb{\hbox}

%
%

\def\lan{{\langle}}
\def\ran{{\rangle}}

\def\esssup{\mathop{\rm esssup}}

\def\h{\widehat}
\def\wt{\widetilde}

\def\cd{\cdot}
\def\cds{\cdots}

\def\ae{\hbox{\rm a.e.{ }}}
\def\as{\hbox{\rm a.s.}}

\def\les{\leqslant}
\def\ges{\geqslant}

\def\({\Big (}
\def\){\Big )}
\def\[{\Big[}
\def\]{\Big]}
\def\lan{\langle}
\def\ran{\rangle}

\def\bde{\begin{definition}\label}
	\def\ede{\end{definition}}

	\def\bel{\begin{equation}\label}
		\def\ee{\end{equation}}
	\def\bt{\begin{theorem}\label}
		\def\et{\end{theorem}}
	\def\bc{\begin{corollary}\label}
		\def\ec{\end{corollary}}
	\def\bl{\begin{lemma}\label}
		\def\el{\end{lemma}}
	\def\bp{\begin{proposition}\label}
		\def\ep{\end{proposition}}
	\def\bas{\begin{assumption}}
		\def\eas{\end{assumption}}
	\def\br{\begin{remark}\label}
		\def\er{\end{remark}}
	\def\ba{\begin{array}}
		\def\ea{\end{array}}
	\def\ed{\end{document}}

\def\rf{\eqref}
\newcommand{\ad}{&\!\!\!\displaystyle}

\def\square#1{\vbox{\hrule\hbox{\vrule height#1%
			\kern#1\vrule}\hrule}}
\def\rectangle#1#2{\vbox{\hrule\hbox{\vrule height#1%
			\kern#2\vrule}\hrule}}


\font\tenbb=msbm10 \font\sevenbb=msbm7 \font\fivebb=msbm5

\newfam\bbfam
\scriptscriptfont\bbfam=\fivebb \textfont\bbfam=\tenbb
\scriptfont\bbfam=\sevenbb



\newtheorem{theorem}{Theorem}[section]
\newtheorem{corollary}[theorem]{Corollary}

\newtheorem{lemma}[theorem]{Lemma}
\newtheorem{proposition}[theorem]{Proposition}

\theoremstyle{definition}
\newtheorem{definition}[theorem]{Definition}

\newtheorem{remark}[theorem]{Remark}

\makeatletter

\@addtoreset{equation}{section}
\makeatother

%
\usepackage{xcolor}
\makeatletter

\input pdf-trans
\newbox\qbox
\def\usecolor#1{\csname\string\color@#1\endcsname\space}
\newcommand\bordercolor[1]{\colsplit{1}{#1}}
\newcommand\fillcolor[1]{\colsplit{0}{#1}}
\newcommand\outline[1]{\leavevmode%
	\def\maltext{#1}%
	\setbox\qbox=\hbox{\maltext}%
	\boxgs{Q q 2 Tr \thickness\space w \fillcol\space \bordercol\space}{}%
	\copy\qbox%
}
\makeatother
\newcommand\colsplit[2]{\colorlet{tmpcolor}{#2}\edef\tmp{\usecolor{tmpcolor}}%
	\def\tmpB{}\expandafter\colsplithelp\tmp\relax%
	\ifnum0=#1\relax\edef\fillcol{\tmpB}\else\edef\bordercol{\tmpC}\fi}
\def\colsplithelp#1#2 #3\relax{%
	\edef\tmpB{\tmpB#1#2 }%
	\ifnum `#1>`9\relax\def\tmpC{#3}\else\colsplithelp#3\relax\fi
}
\bordercolor{black}
\fillcolor{white}
\def\thickness{.3}

\def\hTh{\outline{$\Theta$}}

\begin{document}

	\title{\bf Linear-Quadratic Optimal Control Problem for Mean-Field Stochastic Differential Equations with a Type of Random Coefficients\thanks{This work is supported in part by NSF Grant DMS-2305475.}}
	
	\author{Hongwei Mei\footnote{ Department of Mathematics and Statistics, Texas Tech University, Lubbock, TX 79409, USA. Email: {\tt hongwei.mei@ttu.edu}}~~~~~Qingmeng Wei\footnote{School of Mathematics and Statistics, Northeast Normal University, Changchun 130024, China. Email: {\tt weiqm100@nenu.edu.cn}. This author is supported by  NSF of Jilin Province for Outstanding Young Talents (No.
			20230101365JC) and  NSF of P.R. China (No. 11971099)} ~~~\text{ and }~~~
		Jiongmin Yong\footnote{Department of Mathematics, University of Central Florida, Orlando, FL 32816, USA. Email: {\tt jiongmin.yong@ucf.edu.}} }
	
	\maketitle

\centerline{({\it This paper is dedicated to Professor G.~George Yin on the occasion of his 70th birthday})}

\bs
	
	\no\bf Abstract: \rm Motivated by linear-quadratic optimal control problems (LQ problems, for short) for mean-field stochastic differential equations (SDEs, for short) with the coefficients containing regime switching governed by a Markov chain, we consider an LQ problem for an SDE with the coefficients being adapted to a filtration independent of the Brownian motion driving the control system. Classical approach of completing the square is applied to the current problem and obvious shortcomings are indicated. Open-loop and closed-loop solvability are introduced and characterized.

	\ms
	
	\no\bf Keywords: \rm  Linear-quadratic problem, forward-backward stochastic differential equations, differential Riccati equation.
	
	\ms
	
	\no\bf AMS Mathematics Subject Classification. \rm 93E20, 49N10, 60F17.

	\section{Introduction}

	Let $(\O,\cF,\dbF,\dbP)$ be a complete filtered probability space on which a one-dimensional standard Brownian motion $W(\cd)$ is defined. We begin with the following controlled linear mean-field stochastic differential equation (MF-SDE, for short):
	\bel{SDE0}\left\{\2n\ba{ll}
	\ds dX(t)\1n=\1n\(A(t,\a(t))X(t)\1n+\1n\bar A(t,\a(t))\dbE[X(t)]\1n+\1n B(t,\a(t))u(t)\1n+\1n\bar B(t,\a(t))\dbE[u(t)]\1n+\1n b(t)\)dt\\
	\ns\ds\qq\q+\1n\(\1n C(t,\a(t))X(t)\1n+\1n\bar C(t,\a(t))\dbE[X(t)]\1n+\1n D(t,\a(t))u(t)\1n+\1n\bar D(t,\a(t))\dbE[u(t)]\1n+\1n\si(t)\1n\)dW(t),~t\1n\ges\1n s,\\
	\ns\ds X(s)=\xi,\ea\right.\ee
	In the above, $u(\cd)$ is a control process valued in $\dbR^m$, and $\a(\cd)$ is a Markov chain, independent of $W(\cd)$, valued in some finite subset of $\dbR$, determining the regime switching for the system, $X(\cd)$ is the corresponding state process. A typical form of Markov chain $\a(\cd)$ is the solution to the following SDE:
	\bel{SDE12}d\a(t)=\int_\dbR\m(t,\a(t-),\th)N(dt,d\th),\q t\ges s;\qq\a(s)=\a_0.\ee
	Here, $N(dt,d\th)$ is a Poisson random measure on $\dbR$ with intensity measure
	$\dbE[N(dt,d\th)]$, and $\m(\cd)$ is some given map. Let $\dbF^N\equiv\{\cF_t^N\}_{t\ges0}$ be the filtration of the Poisson process associated with $N(dt,d\th)$. Then the coefficients $A(\cd\,,\a(\cd))$, etc. are all assumed to be $\dbF^N$-adapted. Under proper conditions, for any control $u(\cd)$ (taken from certain class), and initial triple $(s,\xi,\a_0)$, state equation \rf{SDE0} admits a unique solution $X(\cd)\equiv X(\cd\,;s,x,\a_0,u(\cd))$. To measure the performance of the control, one could introduce a quadratic cost functional. Then a corresponding linear-quadratic (LQ, for short) optimal control problem for such a system can be formulated in a finite time horizon.
	
	\ms
	
	There is a vast number of papers dealing with various LQ problems since the seminal works of Bellman--Glicksberg--Gross \cite{Bellman-Glicksberg-Gross-1958}, Kalman \cite{Kalman-1960} and Letov \cite{Letov-1960} appeared around 1960. Let us briefly mention a very small portion of the relevant works on LQ problems. Standard LQ theory for ordinary differential equations can be found in Lee--Markus \cite{Lee-Markus-1967}, Anderson--Moore \cite{Anderson-Moore-1971}, Willems \cite{Willems-1971}, Wonham \cite{Wonham-1979}, and so on. Study of stochastic LQ problems began with the works of Kushner \cite{Kushner-1962} and Wonham \cite{Wonham-1968} in the 1960s. See also McLane \cite{McLane-1971}, Davis \cite{Davis-1977}, Bensoussan \cite{Bensoussan-1981}, and so on, for classical stochastic LQ theory. In 1998, Chen--Li--Zhou \cite{Chen-Li-Zhou-1998} found that for stochastic LQ problems, the weighting matrices in the cost functional could be indefinite to some extent. See Yong--Zhou \cite{Yong-Zhou-1999} and Sun--Yong \cite{Sun-Yong-2020a} for some comprehensive presentations along this line. Stochastic LQ problems with mean-field was studied by Yong \cite{Yong-2013} in 2013. See Huang--Li--Yong \cite{Huang-Li-Yong-2015}, Li--Sun--Yong \cite{Li-Sun-Yong-2016}, Sun \cite{Sun-2017}, Wei--Yong--Yu \cite{Wei-Yong-Yu-2019}, Sun--Yong \cite{Sun-Yong-2020b}, Li--Shi--Yong \cite{Li-Shi-Yong-2021} for some follow-up works. On the other hand, egordic LQ control problem was studied by Mei--Wei--Yong \cite{Mei-Wei-Yong-2021} and LQ problem with regime switching in finite time horizon was studied by Zhang--Li \cite{Zhang-Li-2018}, Wen--Li--Xiong--Zhang \cite{Wen-Li-2022 }.
	
	\ms
	
	We now introduce the general framework of this paper, which is strictly more general than the above, and it could cover some other interesting situations. Let $(\O,\cF,\dbF,\dbP)$ and $W(\cd)$ be as above.  Also, let a square integrable c\`adl\`ag martingale $M(\cd)$ be defined. We assume that $W(\cd)$ and $M(\cd)$ are independent and their natural filtrations augmented by all the $\dbP$-null sets in $\cF$ are denoted by $\dbF^W\equiv\{\cF^W_t\}_{t\ges0}$ and $\dbF^M\equiv\{\cF^M_t\}_{t\ges0}$, respectively. Next, let $\dbF=\dbF^W\vee\dbF^M\equiv\{\cF^W_t\vee\cF^M_t\}_{t\ges0}$ (with $\cF^W_t\vee\cF^M_t=\si\big(\cF^W_t\cup\cF^M_t\big)$) and denote $\dbE_t^M[\,\cd\,]:=\dbE[\,\cd\,|\cF_t^M]$. Further, we write $\dbE^M[\,\cd\,]:=\dbE[\,\cd\,|\cF_\infty^M]$ with $\cF^M_\infty=\si\big(\bigcup_{t\ges0}\cF^M_t\big)$. Now, we consider the following controlled SDE:
	\bel{SDE1}\left\{\2n\ba{ll}
	\ds dX(t)=\[A(t)X(t)+\bar A(t)\dbE^M[X(t)]+B(t)u(t)+\bar B(t)\dbE^M[u(t)]+b(t)\]dt,\\
	\ns\ds\qq\qq+\[C(t)X(t)+\bar C(t)\dbE^M[X(t)]+D(t)u(t)+\bar D(t)\dbE^M[u(t)]
	+\si(t)\]dW(t),\qq t\ges s,\\
	\ns\ds X(s)=\xi,\ea\right.\ee
	where $X(\cd)$ is the state process valued in $\dbR^n$ (with initial pair $(s,\xi)$), and $u(\cd)$ is a control process valued in $\dbR^m$, both are $\dbF$-adapted. The coefficients $A(\cd),\bar A(\cd),C(\cd),\bar C(\cd)$ and $B(\cd),\bar B(\cd)$, $D(\cd),\bar D(\cd)$ are either $\dbR^{n\times n}$ or $\dbR^{n\times m}$-valued, $\dbF^M$-adapted stochastic processes; the nonhomogeneous terms $b(\cd),\si(\cd)$ are $\dbR^n$-valued, $\dbF$-adapted stochastic processes. We see that \rf{SDE1} is a linear SDE with special type mean-field terms and special type random coefficients. By Lemma \ref{infty=t} (in the appendix), we will see that
	$$\dbE^M[X(t)]=\dbE^M_t[X(t)],\qq\dbE^M[u(t)]=\dbE_t^M[u(t)],\qq t\ges s.$$
	Thus, $\dbE^M$ in \rf{SDE1} can also be replaced by $\dbE^M_t$.  Note that the mean-field terms $\dbE^M[X(t)]$ and $\dbE^M[u(t)]$ are $\cF^M_t$-measurable for each $t$. In the case that $M$ is a constant process, \rf{SDE1} reduces to the classical linear mean-field SDE with deterministic coefficients (see \cite{Yong-2013,Sun-2017}). On the other hand, if $M(t)=N(t)-\lambda t$ where $N(\cd)$ is a Poisson process with intensity $\lambda$, our framework recovers the usual regime switching case. It could also be the case that $M(\cd)$ is another Brownian motion that is independent of $W(\cd)$ and/or a pair of a Brownian motion and a Poisson process which are independent of $W(\cd)$, and so on. From now on, by  $\lan M\ran$, we denote the quadratic variation process of $M$. Moreover, we assume that the natural filtration $\dbF^M$ of $M$ (augmented by all the $\dbP$-null sets) is complete, quasi-left continuous, right-continuous.
	
	\ms
	
	In the case that the {\it nonhomogeneous terms} $b(\cd)$ and $\si(\cd)$ are zero, the control system is said to be {\it homogeneous}, and such a system is denoted by $[A(\cd),\bar A(\cd),C(\cd),\bar C(\cd);B(\cd),\bar B(\cd),D(\cd),\bar D(\cd)]$. The state process of the homogeneous system is denoted by $X^0(\cd)\equiv X^0(\cd\,;s,\xi,u(\cd))$. Thus,
	\bel{SDE1-h}\left\{\2n\ba{ll}
	\ds dX^0(t)=\(A(t)X^0(t)+\bar A(t)\dbE^M[X^0(t)]+B(t)u(t)+\bar B(t)\dbE^M[u(t)]\)dt\\
	\ns\ds\qq\qq\qq+\(C(t)X^0(t)+\bar C(t)\dbE^M[X^0(t)]+D(t)u(t)+\bar D(t)\dbE^M[u(t)]\)dW(t),\qq t\ges s,\\
	\ns\ds X^0(s)=\xi.\ea\right.\ee
	Next, for given Euclidean space $\dbH$ (which could be $\dbR^n$, $\dbR^{n\times m}$, etc.), we let
	$$\ba{ll}
	\ds L^2_{\cF_s}(\O;\dbH)=\Big\{\xi:\O\to\dbH\bigm|\xi\hb{ is $\cF_s$-measurable, }\dbE|\xi|^2<\infty\Big\},\\
	\ds L^2_\dbF(s,T;\dbH)\1n\equiv\1n\Big\{\f\1n:\1n[s,T]\1n\times\1n\O\1n\to\1n
	\dbH\bigm|\f(\cd)\hb{ is $\dbF$-progressively measurable, }\dbE\2n\int_s^T\3n|\f(t)|^2dt\1n<\1n\i\Big\},\\
	\ds L^2_{\dbF_-}(s,T;\dbH)\1n\equiv\1n\Big\{\f\in L^2_\dbF(s,T;\dbH)\bigm|\f(\cd)\hb{ is $\dbF$-predictable}\Big\},\ea$$
	and
	$$\ba{ll}
	\ds\sD=\Big\{(s,\xi)\bigm|s\in[0,\i),~\xi\in L_{\cF_s}^2(\O;\dbR^n)\Big\},\\
	\ns\ds\sU[s,T]=L^2_\dbF(s,T;\dbR^m),\q\hb{if }0<T<\infty;\qq\sU[s,\i)=L^2_\dbF(s,\i;\dbR^m).\ea$$
	Under proper conditions, for any given {\it initial pair} $(s,\xi)\in\sD$ with $0\les s<T<\i$, and any control $u(\cd)\in\sU[s,T]$, the state equation \rf{SDE1} admits a unique strong solution $X(\cd)\equiv X(\cd\,;s,\xi,u(\cd))$ on $[s,T]$. To measure the performance of the control $u(\cd)$ over $[s,T]$, we may introduce the following quadratic cost functional:
	\bel{cost[0,T]}\ba{ll}
	\ns\ds J(s,\xi;u(\cd))=\dbE\int_s^Tf\big(t,X(t), \dbE^M[X(t)],u(t),\dbE^M[u(t)]\big)dt+\dbE\big[ F\big(X(T),\dbE^M[X(T)]\big)\big],\ea\ee
	where
	\bel{f-F}\left\{\2n\ba{ll}
	\ds f(t ,x,\bar x,u,\bar u)=\frac12\[ \lan Q(t)x,x\ran+2\lan S(t)x,u\ran+\lan R(t)u,u\ran +\lan\bar Q(t)\bar x,\bar x\ran\1n+2\lan\bar S(t) \bar x ,\bar u\ran\1n+\1n\lan\bar R(t)\bar u,\bar u\ran\\ [1mm]
	\ns\ds\qq\qq\qq\qq+2\lan q(t),x\ran+2\lan\bar q(t),\bar x\ran+2\lan r(t),u\ran+2\lan\bar r(t),\bar u\ran\], \\
	\ns\ds F(x,\bar x)=\frac12\[\lan Gx,x\ran+\lan\bar G\bar x,\bar x\ran+2\lan g,x\ran+2\lan\bar g,\bar x\ran\] ,\ea\right.\ee
	with $Q(\cd),\bar Q(\cd),S(\cd),\bar S(\cd),R(\cd),\bar R(\cd)$ being some $\dbF^M$-adapted matrix-valued processes of suitable sizes, $q(\cd)$, $r(\cd)$ being $\dbF$-adapted (vector-valued) process, $\bar q(\cd)$, $\bar r(\cd)$ being $\dbF^M$-adapted (vector-valued) processes, $G,\bar G$ being $\cF^M_T$-measurable random matrices, $g$ being $\cF_T$-measurable random vector, and $\bar g$ being $\cF^M_T$-measurable random vector. The two terms on the right-hand side of \rf{cost[0,T]} are called the {\it expected running cost} and the {\it expected terminal cost}, respectively. Correspondingly, $f(\cd)$ defined in \rf{f-F} is called the {\it running cost rate function}. In the case that $q(\cd),\bar q(\cd),r(\cd),\bar r(\cd),g,\bar g$ are all zero, we denote the cost functional by $J^0(s,\xi;u(\cd))$. Hence,
	\bel{cost[0,T]-h}\ba{ll}
	\ns\ds J^0(s,\xi;u(\cd))=\dbE\int_s^Tf^0\big(t,X^0(t),\dbE^M[X^0(t)],u(t),\dbE^M[u(t)]\big)dt
	+\dbE\big[ F^0\big(X^0(T),\dbE^M[X^0(T)]\big)\big],\ea\ee
	where
	\bel{f-F0}\left\{\2n\ba{ll}
	\ds f^0(t,x,\bar x,u,\bar u)=\frac12\[\lan Q(t)x,x\ran+2\lan S(t)x,u\ran+\lan R(t)u,u\ran+\lan\bar Q(t)\bar x,\bar x\ran\1n+2\lan\bar S(t)\bar x,\bar u\ran\1n+\1n\lan\bar R(t)\bar u,\bar u\ran\]\\ [1mm]
	\ns\ds F^0(x,\bar x)=\frac12\[\lan Gx,x\ran+\lan\bar G\bar x,\bar x\ran\].\ea\right.\ee
	Our finite time-horizon optimal control problem can be stated as follows.
	
	\ms
	
	\no	\bf Problem (MF-LQ). \rm For any given $(s,\xi)\in\sD$ having $s\in[0,T)$, with the state equation \rf{SDE1} and cost functional \rf{cost[0,T]}, find a control $\bar u(\cd)\in\sU[s,T]$ such that
	\bel{J=inf-open}J(s,\xi;\bar u(\cd))=\inf_{u(\cd)\in\sU[s,T]}J(s,\xi;u(\cd))\equiv V(s,\xi).\ee
	We call any $\bar u(\cd)$ satisfies the above as an {\it open-loop optimal control}, and corresponding $\bar X(\cd)$, $(\bar X(\cd),\bar u(\cd))$ as {\it open-loop optimal state process} and {\it open-loop optimal pair} of Problem (MF-LQ)$^0$, respectively. We also call $V(\cd\,,\cd)$ the {\it value function} of Problem (MF-LQ).	
	
	\ms
	
	The problem for the homogeneous state equation \rf{SDE1-h} and purely quadratic cost functional \rf{cost[0,T]-h} is denoted by Problem (MF-LQ)$^0$.	
	
	\ms

	Now, we highlight the main contributions and clues of the current paper.
	
	\ms
	
	(i) The first natural approach to LQ problem is to use the method of completing the squares. Applying such a method, one could obtain sufficient conditions for the existence of optimal control. It is not clear whether the imposed conditions are necessary for Problem (MF-LQ) to have an optimal control. This motivates our study below.
	
	\ms
	
	(ii) In order to explore the above, mainly inspired by Sun--Yong \cite{Sun-Yong-2020a, Sun-Yong-2020b}, we introduce the open-loop solvability of Problem (MF-LQ), and obtained its characterization in terms of the solvability of the optimality system which is a coupled mean-field forward-backward stochastic differential equation (MF-FBSDE, for short). From this, the open-loop optimal control can be written in terms of the predictable solution of this MF-FBSDE. Note that due to the appearance of the martingale $M(\cd)$, the corresponding MF-FBSDE is different from the classical FBSDE (\cite{Ma-Yong-1999}).
	
	\ms
	
	(iii) It is clear that the above obtained representation of the open-loop optimal control is anticipating: the future information of the open-loop optimal state process is used. To remedy that, we try to find a closed-loop representation of the open-loop optimal control by decoupling the MF-FBSDE using the idea of invariant imbedding (\cite{Bellman-Kalaba-Wing-1960}) (or the Four-Step Scheme, introduced in \cite{Ma-Protter-Yong-1994}). This naturally derives  a differential Riccati equation which is a backward stochastic differential equation (BSDE, for short) in general. As long as such an equation has a predictable solution, the open-loop optimal control admits a closed-loop representation, which is non-anticipating.
	
	\ms
	
	(iv) This logically suggests us introduce the closed-loop solvability of the LQ problem, which in turn can be characterized by the regular solvability of the differential Riccati equation that is derived earlier. Note that, in the current case, the martingale $M(\cd)$ appears not only in the FBSDE, but also in the differential Riccati equation. This brings some new features into the study. It is worthy to point out that the method used in \cite{Li-Sun-Yong-2016} seems to be difficult to extend here. Therefore some new methods will be created.
	
	\ms
	
	The rest of the paper is organized as follows.	Some preliminary results are collected in
	Section 2. In Section 3, we follow the idea of completing the squares for Problem (MF-LQ), naturally deriving the Riccati equation. In Section 4, the open-loop solvability of Problem (MF-LQ)$^0$ is introduced and characterized by a mean-field FBSDE. Close-loop representation of open-loop optimal control is established by decoupling the FBSDE in Section 5. In Section 6, closed-loop solvability is introduced and characterized by the regular solvability of the Riccati equation. In Section 7, it was proved that the strongly regular solvability of the Reccati equation is equivalent to the uniform convexity of the cost functional. This gives the natural sufficient condition under which Problem (MF-LQ) is closed-loop solvable. Finally, some simple conclusion remarks are put in  the last section.
	
	\section{Preliminary}

	First of all, besides the spaces $L^2_{\cF_s}(\O;\dbH)$ and $L^2_\dbF(s,T;\dbH)$ defined in the previous section, we introduce some more spaces: For $0\les s<T$, define
	$$\ba{ll}
	\ns\ds L_{\cF_s^M}^2(\O;\dbH):=\Big\{\xi\in L^2_{\cF_s}(\O;\dbH)\bigm|\xi\hb{ is $\cF_s^M$-measurable}\Big\},\\
	\ns\ds L^2_{\dbF^M}(s,T;\dbH)=\Big\{\f(\cd)\in L^2_\dbF(s,T;\dbH)  \bigm|  \f(\cd) \hb{ is $\dbF^M$-progressively measurable}\Big\},\\
	\ns\ds L^2_{\dbF^M_-}(s,T;\dbH)=\Big\{\f(\cd)\in L^2_\dbF(s,T;\dbH)  \bigm|  \f(\cd) \hb{ is $\dbF^M$-predictable}\Big\},\\
	\ns\ds L^\infty_{\dbF}(s,T;\dbH)=\Big\{\f:[s,T]\times\O\to\dbH\bigm|\f(\cd)\hb{ is $\dbF$-progressively measurable, }\esssup_{t\in[s,T]}\|\f(t,\cd)\|_\i <\infty\Big\},\\
	\ns\ds L^\infty_{\dbF^M}(s,T;\dbH)=\Big\{\f(\cd)\in L^\infty_{\dbF}(s,T;\dbH)\bigm| \f(\cd)\hb{ is $\dbF^M$-progressively measurable}\Big\}. \ea$$
	Let $\dbS^k$ ($\dbS^k_+$) be the set of all the $k\times k$ symmetric (positive-definite) matrices. For a symmetric positive semi-definite matrix $\Psi\in\dbS^{k}$, denoted by $\Psi\ges 0$, we denote the pesudo-inverse of $\Psi$ by $\Psi^\dag$, and the range of $\Psi$ by $\sR(\Psi)$.
	In the following, we may misuse a little the notation $\lan\cd\,,\cd\ran$ that represents inner products in different spaces which can be identified from the contexts.
	
	\ms
	
	Next, for any $s\in [0,T)$, we know that $L^2_{\cF_s}(\O;\dbH)$ is a Hilbert space under the following inner product
	$$\dbE\lan\xi,\eta\ran\equiv\int_\O\lan\xi(\o),\eta(\o)\ran d\dbP(\o),\qq\xi,\eta\in L^2_{\cF_s}(\O;\dbH).$$
	The space $L_{\cF_s^M}^2(\O;\dbH)$ with the same inner product as above, is a closed subspace of $L^2_{\cF_s}(\O;\dbH)$, and its orthogonal complement in $L^2_{\cF_s}(\O;\dbH)$ is given by
	$$L^2_{\cF_s^M}(\O;\dbH)^\perp:=\Big\{\xi\in L^2_{\cF_s}(\O;\dbH)\bigm|\dbE\lan\xi,\eta\ran=0,\q\forall
	\eta\in L^2_{\cF_s^M}(\O;\dbH)\Big\}.$$
	Let the orthogonal projection from $L^2_{\cF_s}(\O;\dbH)$ onto $L^2_{\cF_s^M}(\O;\dbH)$ be denoted by $\Pi_s$, and let $\Pi_s^\perp=I-\Pi_s$. Then, we get an orthogonal decomposition of $L^2_{\cF_s}(\O;\dbH)$ as follows,
	\bel{spacede}L^2_{\cF_s}(\O;\dbH)=L^2_{\cF_s^M}(\O;\dbH)^\perp\oplus L^2_{\cF_s^M}(\O;\dbH).\ee
	Now, for any $\xi\in L^2_{\cF_s}(\O;\dbH)$ and $\eta\in L^2_{\cF^M_s}(\O;\dbH)$,
	$$\dbE\lan\xi-\dbE^M_s[\xi],\eta\ran=\dbE\lan\xi,\eta\ran-\dbE\lan \dbE^M_s[\xi],\eta\ran=0.$$
	Thus, $\xi-\dbE^M_s[\xi]\in L^2_{\cF^M_s}(\O;\dbH)^\perp$. Consequently,
	$$\Pi_s[\xi]-\dbE^M_s[\xi]=\(\xi-\dbE^M_s[\xi]\)-\(\xi-\Pi_s[\xi]\)\in L^2_{\cF^M_s}
	(\O;\dbH)\cap L^2_{\cF^M_s}(\O;\dbH)^\perp=\{0\}.$$
	Hence,
	\bel{Pi_s=E_s}\Pi_s[\xi]=\dbE^M_s[\xi],\qq\forall\xi\in L^2_{\cF_s}(\O;\dbH).\ee
	On the other hand, thanks to Lemma \ref{infty=t} (from the Appendix), we have
	$\dbE^M_s[\,\cd\,]=\dbE^M[\,\cd\,]\equiv\dbE[\,\cd\,|\cF^M_T]$. Therefore,
	\bel{Pi_s=E}\Pi_s[\xi]=\dbE_s^M[\xi]=\dbE^M[\xi],\qq\Pi_s^\perp[\xi]=\xi-\dbE_s^M[\xi]
	=\xi-\dbE^M[\xi],\qq\forall\xi\in L^2_{\cF_s}(\O;\dbH).\ee
	We should note that any non-zero element in $L^2_{\cF_s^M}(\O;\dbH)^\perp$ is not
	$\cF_s^M$-measurable. Also, for any $\xi\in L^2_{\cF_s}(\O;\dbH)$, $\dbE^M_s[\xi]$ is in $L^2_{\cF_s^M}(\O;\dbH)$, and it is not necessarily a deterministic vector.
	
	\ms
	
	Based on the above, for any $0\les s< T$, we further define $\Pi: L^2_{\dbF}(s,T;\dbH)\to L^2_{\dbF^M}(s,T;\dbH) $ as follows:
	\bel{Pi}\Pi[v(\cd)](t)=\Pi_t[v(t)]\equiv\dbE^M_t[v(t)]=\dbE^M[v(t)],\qq\ae t\in[s,T],~\forall v(\cd)\in L^2_\dbF(s,T;\dbH).\ee
	Note that for any $v(\cd)\in L^2(s,T;\dbH)$, $\Pi[v(\cd)]$ is defined for almost all
	$t\in[s,T]$, as a process defined on $[s,T]$.
	We now show that $\Pi$ is the orthogonal projection from $L^2_\dbF(0,T;\dbH)$ onto $L^2_{\dbF^M}(0,T;\dbH)$. In fact, first of all, if $v(\cd)=\bar v(\cd)$ in $L^2_{\dbF}(s,T;\dbH)$, we get
	$$\dbE\int_s^T\big|\Pi[v(\cd)](t)-\Pi[\bar v(\cd)](t)\big|^2dt\les\dbE\int_s^T
	\big|v(t)-\bar v(t)\big|^2dt=0,$$
	which leads to $\Pi[v(\cd)]=\Pi[\bar v(\cd)]$ in $L^2_{\dbF^M}(s,T;\dbH)$. This means that $\Pi$ is well-defined. Clearly, $\Pi^2=\Pi$ and
	$$\ba{ll}
	\ns\ds\lan\Pi[v(\cd)],\bar v(\cd)\ran=\dbE\int_s^T\lan\dbE^M[v(t)],\bar v(t)\ran dt=\dbE\int_s^T\lan\dbE^M[v(t)],\dbE^M[\bar v(t)]\ran dt\\
	\ns\ds\qq\qq\qq=\dbE\int_s^T\lan v(t),\dbE^M[\bar v(t)]\ran dt=\lan v(\cd),\Pi[\bar v(\cd)]\ran.\ea$$
	Thus, $\Pi$ is a self-adjoint idempotent, which means that $\Pi$ is an orthogonal projection from $L^2_\dbF(s,T;\dbH)$ onto $L^2_{\dbF^M}(s,T;\dbH)$. Next, we denote $\Pi^\perp:= I-\Pi$, which is the orthogonal projection from $L^2_{\dbF}(s,T;\dbH)$ onto $L^2_{\dbF^M}(s,T;\dbH)^\perp$,  where
	$$L^2_{\dbF^M}(s,T;\dbH)^\perp:=\Big\{v(\cd)\in L^2_{\dbF}(s,T;\dbH)\bigm|\dbE\int_s^T \lan v(t),\bar v(t)\ran dt=0,\q\forall\bar v(\cd)\in L^2_{\dbF^M}(s,T;\dbH)\Big\}.$$
	Also
	$$L^2_{\dbF_-^M}(s,T;\dbH)^\perp:=\Big\{v(\cd)\in L^2_{\dbF^M}(s,T;\dbH)^\perp\bigm|\f(\cd)\hb{ is $\dbF$-predictable }\Big\}=\Pi^\perp\(L^2_{\dbF_-}(s,T;\dbH)\).$$
	From the above, we know that $L^2_{\dbF^M}(s,T;\dbH)\oplus L^2_{\dbF^M}(s,T;\dbH)^\perp$
	is an orthogonal decomposition of $L^2_{\dbF}(s,T;\dbH)$, and $L^2_{\dbF^M_-}(s,T;\dbH)\oplus L^2_{\dbF^M_-}(s,T;\dbH)^\perp$
	is an orthogonal decomposition of $L^2_{\dbF_-}(s,T;\dbH)$

	\br{} \rm Note that although we have not indicated, the projections $\Pi_s$, $\Pi_s^\perp$ and $\Pi[v(\cd)]$, $\Pi^\perp[v(\cd)]$ in the above definitions  are actually depending on the dimension $k$ of the underlying space $\dbH$. For notational simplicity, we will not indicate such a dependence explicitly, which is clear from the context.
	
	\er
	
	In what follows, for  given $s\in[0,T)$, we will conventionally make the following identifications: For any Euclidean space $\dbH$,
	\bel{convention}\ba{ll}
	\ns\ds\xi=\Pi^\perp_s[\xi]+\Pi_s[\xi]=\xi_1\oplus\xi_2\equiv(\xi_1,\xi_2),\qq\forall\xi\in L^2_{\cF_s}(\O;\dbH);\\
	\ns\ds\f(\cd)=\Pi^\perp[\f(\cd)]+\Pi[\f(\cd)]=\f_1(\cd)\oplus\f_2(\cd)\equiv(\f_1(\cd),\f_2(\cd)),\qq\forall\f(\cd)\in L^2_\dbF(s,T;\dbH).\ea\ee
	Here, $\xi_1\oplus\xi_2$ stands for $\xi_1+\xi_2$ with $\xi_1$ and $\xi_2$ being mutually perpendicular; the meaning of $\f_1(\cd)\oplus\f_2(\cd)$ is similar. In the above, $\f(\cd)$ could be $X(\cd)$, $u(\cd)$, $b(\cd)$ and so on. For simplicity, we will do not distinguish $\Pi_s$ and $\Pi$ below. Further, we will denote
	$$\Pi_1=\Pi^\perp= I-\Pi,\qq\Pi_2=\Pi.$$
	Next, let us make a couple of additional observations.
	
	\ms
	
	$\bullet$ By the independence of $W(\cd)$ and $M(\cd)$, we have
	\bel{Pi W=0}\ba{ll}
	\ns\ds\Pi_1\int_0^t\f(s)dW(s)=(I-\Pi)\int_0^t\f(s)dW(s)=\int_0^t\f(s)dW(s),
	\q\forall\f(\cd)\in L^2_\dbF(0,T;\dbH),\\
	\ns\ds\Pi_2\int_0^t\f(s)dW(s)\equiv\dbE^M\int_0^t\f(s)dW(s)=0,\q\forall\f(\cd)\in
	L^2_\dbF(0,T;\dbH),\\
	\ns\ds\Pi_i\int_0^t\f(s)dM(s)=\int_0^t\Pi_i[\f(s)]dM(s),\q\forall\f(\cd)\in L^2_\dbF(0,T;\dbH),\q i=1,2.\ea\ee
	From the first two in the above, one sees that
	\bel{2.7}\int_0^\cd\f(s)dW(s)\in L^2_{\dbF^M}(0,T;\dbH)^\perp,\qq\forall\f(\cd)\in
	L^2_\dbF(0,T;\dbH).\ee
	The point in the above is that $\f(\cd)$ is arbitrary and it does not have to be
	$\dbF^M$-adapted, or in $L^2_{\dbF^M}(0,T;\dbH)^\perp$.
	
	\ms
	
	$\bullet$ For any $P(\cd)\in L^\i_{\dbF^M}(s,T;\cL(\dbH_1,\dbH_2))$ and $\f(\cd)\in L^2_\dbF(s,T;\dbH_1)$,
	$$\Pi_i\big[P(t)\f(t)\big]=P(t)\Pi_i[\f(t)],\qq i=1,2.$$
	
	\ms
	
	Before going further, let us introduce the following hypothesis which will be assumed throughout of the paper.
	
	\ms
	
	\no	{\bf(H2.1)} Let $0<T<\i$.
	
	\ms
	
	(i) The coefficients and nonhomogeneous terms of the state equation \rf{SDE1} satisfy
	$$\left\{\2n\ba{ll}
	\ds A(\cd),\bar A(\cd),C(\cd),\bar C(\cd)\in L^\infty_{\dbF^M}(0,T;\dbR^{n\times n}),
	\q B(\cd),\bar B(\cd), D(\cd),\bar D(\cd)\in L^\infty_{\dbF^M}(0,T;\dbR^{n\times m}),\\
	\ns\ds b(\cd),\si(\cd)\in L^2_{\dbF}(0,T;\dbR^n).\ea\right.$$
	
	(ii) The weighting coefficients of the cost functional \eqref{cost[0,T]} satisfy
	$$\left\{\2n\ba{ll}
	\ds Q(\cd),\bar Q(\cd)\in L^\infty_{\dbF^M}(0,T;\dbS^{n}),~ R(\cd),\bar R(\cd)\in
	L^\infty_{\dbF^M}(0,T;\dbS^{m}),~ S(\cd),\bar S(\cd)\in L^\infty_{\dbF^M}(0,T;
	\dbR^{m\times n}),\\
	\ns\ds q(\cd)\in L^2_{\dbF}(0,T;\dbR^n),~\bar q(\cd)\in L^2_{\dbF^M}(0,T;\dbR^n),~
	r(\cd)\in L^2_{\dbF}(0,T;\dbR^m),~\bar r(\cd)\in L^2_{\dbF^M}(0,T;\dbR^m),\\
	\ns\ds G,\bar G\in L^\infty_{\cF_T^M}(\O;\dbS^n),~g\in L^2_{\cF_T}(\O;\dbR^n),~
	\bar g\in L^2_{\cF_T^M}(\O;\dbR^n).\ea\right.$$
	
	\ss
	
	It is clear that under (H2.1), for any $(s,\xi)\in\sD$ with $s\in[0,T)$, and
	$u(\cd)\in\sU[s,T]$, the state equation \rf{SDE1} admits a unique solution $X(\cd)\in
	L^2_\dbF(s,T;\dbR^n)$, and the cost functional \rf{cost[0,T]} is well-defined.
	Therefore, Problem (MF-LQ) is well-formulated.
	
	\ms
	
	Now, applying the orthogonal projection $\Pi_2$ to state equation \rf{SDE1}, one has
	(noting the convention \rf{convention})
	\bel{dX_2}\ba{ll}
	\ds dX_2(t)=\[A(t)X_2(t)+\bar A(t)X_2(t)+B(t)u_2(t)+\bar B(t)u_2(t)+b_2(t)\]dt\\
	\ns\ds\qq\q\;=\[\big(A(t)+\bar A(t)\big)X_2(t)+\big(B(t)+\bar B(t)\big)u_2(t)+b_2(t)\]dt.\ea\ee
	Noting $X_1(t)=X(t)-X_2(t)$, and $u_1(t)=u(t)-u_2(t)$, etc., subtracting the above from state equation \rf{SDE1}, we obtain
	\bel{dX_1}\ba{ll}
	\ds dX_1(t)=\[A(t)X_1(t)+B(t)u_1(t)+b_1(t)\]dt\\
	\ns\ds\qq\qq+\[C(t)X_1(t)+\big(C(t)+\bar C(t)\big)X_2(t)+D(t)u_1(t)+\big(D(t)+\bar D(t)\big)u_2(t)+\si(t)\]dW(t).\ea\ee
	Hence, by letting
	\bel{A_1}\ba{ll}
	\ns\ds A_1(\cd):=A(\cd),\q A_2(\cd):=A(\cd)+\bar A(\cd),\q B_1(\cd):=B(\cd),\q B_2(\cd):=B(\cd)+\bar B(\cd),\\
	\ns\ds C_1(\cd):=C(\cd),\q C_2(\cd):=C(\cd)+\bar C(\cd),\q D_1(\cd):=D(\cd),\q D_2(\cd):=D(\cd)+\bar D(\cd),\ea\ee
	we see that state equation \rf{SDE1} is equivalent to the following system (with $t$
	being properly suppressed):
	\bel{SDE2}\left\{\2n\ba{ll}
	\ds dX_1(t)=[A_1 X_1+B_1u_1+b_1]dt+[C_1 X_1+C_2 X_2+D_1u_1+D_2u_2+\si_1+\si_2]dW(t),\q t\in[s,T],\\
	\ns\ds dX_2(t)=[ A_2X_2+B_2u_2+b_2]dt,\q t\in[s,T],\\
	\ns\ds X_1(s)=\xi_1,~X_2(s)=\xi_2, \q \mbox{ with } (\xi_1,\xi_2)=\big(\Pi_1[\xi],\Pi_2[\xi]\big).\ea\right.\eel
	For the cost functional \rf{cost[0,T]}, we observe the following
	$$\ba{ll}
	\ns\ds\dbE\[\lan Q(t)X(t),X(t)\ran\]=\dbE\[\big\lan Q(t)\big(\dbE^M[X(t)]+X(t)-\dbE^M[X(t)]\big),\dbE^M[X(t)]+X(t)-\dbE^M[X(t)]\big\ran\]\\
	\ns\ds=\dbE\[\big\lan Q(t)\Pi_1[X](t),\Pi_1[X](t)\big\ran+\big\langle Q(t)\Pi_2[X](t),\Pi_2[X](t)\big\ran\]\\
	\ns\ds=\dbE\[\big\lan Q X_1(t),X_1(t)\big\ran+\big\lan Q X_2(t), X_2(t)\big\ran\].\ea$$
	Similar calculations apply to the other terms in the cost functional, which leads   \rf{cost[0,T]} to the following:
	\bel{cost1}\ba{ll}
	\ns\ds J(s,\xi_1,\xi_2;u_1(\cd),u_2(\cd))\equiv J(s,\xi;u(\cd))\\
	\ns\ds=\frac12\sum_{i=1}^2\dbE\[\int_s^T\(\lan Q_iX_i,X_i\ran\! +\!2\lan S_iX_i, u_i\ran\!+\!\lan R_iu_i,u_i\ran+2\lan q_i,X_i\ran+2\lan r_i,u_i\ran\)dt\\
	\ns\ds\qq\qq\qq\qq+\lan G_iX_i(T), X_i(T)\ran+ 2\lan g_i,X_i(T)\ran\],\ea\ee
	with 	
	\bel{Q_1}\ba{ll}
	\ns\ds Q_1(\cd):=Q(\cd),\q Q_2(\cd):=Q(\cd)+\bar Q(\cd),\q  S_1(\cd):=S(\cd),\q S_2(\cd):=S(\cd)+\bar S(\cd),\\
	\ns\ds R_1(\cd):=R(\cd),\q R_2(\cd):=R(\cd)+\bar R(\cd),\\
	\ns\ds q_1(\cd)=\Pi_1[q(\cd)],\q q_2(\cd)=\Pi_2[q(\cd)]+\bar q(\cd),\q  r_1(\cd)=\Pi_1[r(\cd)],\q r_2(\cd)=\Pi_2[r(\cd)]+\bar r(\cd),\\
	\ns\ds G_1:=G,\q G_2:=G+\bar G,\q g_1=\Pi_1[g],\q g_2=\Pi_2[g]+\bar g.\ea \ee
	Then Problem (MF-LQ) can be equivalently formulated with the state equation \rf{SDE2} and the cost functional \rf{cost1}.
	
	\ms
	
	In the case that $b(\cd),\si(\cd),q(\cd),\bar q(\cd), r(\cd),\bar r(\cd), g, \bar g$ are all zero, \rf{SDE2} becomes (compare with \rf{SDE1-h})
	\bel{SDE4}\left\{\2n\ba{ll}
	\ds dX_1^0(t)=[A_1X_1^0+B_1u_1]dt+[C_1X_1^0+C_2X_2^0 +D_1u_1+D_2u_2]dW(t),\qq t\in[s,T],\\
	\ns\ds dX_2^0(t)=[A_2X_2^0+B_2u_2]dt,\qq t\in[s,T],\\
	\ns\ds X_1^0(s)=\xi_1,\qq X_2^0(s)=\xi_2.\ea\right.\eel
	The cost functional \rf{cost1} becomes
	\bel{cost2}\ba{ll}
	\ns\ds J^0(s,\xi_1,\xi_2;u_1(\cd),u_2(\cd))\equiv J^0(s,\xi;u(\cd))\\
	\ns\ds=\frac12\dbE\Big\{\int_s^T\[\lan QX^0,X^0\ran\! +\!2\lan SX^0,u\ran\1n+\1n\lan Ru,u\ran\]dt+\lan GX^0(T),X^0(T)\ran\Big\}\\
	\ns\ds=\frac12\sum_{i=1}^2\dbE\Big\{\int_s^T\[\lan Q_iX_i^0,X_i^0\ran\! +\!2\lan S_iX_i^0, u_i\ran\!+\!\lan R_iu_i,u_i\ran\]dt+\lan G_iX^0_i(T),X^0_i(T)\ran\Big\}.\ea\ee
	In this case, we recall, from the introduction, that the corresponding LQ problem is
	named Problem (MF-LQ)$^0$. This will play an important role later.
	
	\section{Completing the Square --- a Classical Approach.}	
	
	\ms
	
	For LQ problems, the most natural approach is the {\it completing the square}. In this section, we present such an approach for our Problem (MF-LQ).
	
	\ms
	
	For any given $(s,\xi)\in\sD$, $J(s,\xi;u(\cd))$ contains the running terms (the terms in the integral over $[s,T]$) and the terminal terms (the terms at $T$). Such a mixed form is not convenient for us to complete the squares. Therefore, our first step is to write the terminal terms into the running terms. To this end, we introduce the following backward stochastic differential equations (BSDEs, for short):
	($i=1,2$)%
	\bel{P_i}\left\{\2n\ba{ll}
	\ds dP_i(t)=\G_i(t)dt+\L_i^M(t)dM(t),\qq t\in[s,T],\\
	\ns\ds P_i(T)=G_i,\ea\right.\ee
	for some undetermined $\G_i(\cd)\in L^\infty_{\dbF^M}(s,T;\dbS^n)$. Thus, for such a case, the drift, denoted $\dot P_i$, of $P_i$ is bounded. 
		By saying $(P_i(\cd),\L_i^M(\cd))$ to be a {\it predictable solution} of \rf{P_i}, we mean that $P_i(\cd),\dot P_i(\cd)\in L^\infty_{\dbF^M}(s,T;\dbS^n)$ and $\L^M_i(\cd)\in L^2_{\dbF_-^M}(s,T;\dbS^n)$ and \rf{P_i} is satisfied in the usual sense (note that $G_i\in L^\i_{\cF_T^M}(\O;\dbS^n)$). Further we introduce the following BSDEs:
	\bel{eta_1}\left\{\2n\ba{ll}
	\ds d\eta_1(t)=\g_1(t)dt+\z_1(t)dW(t)+\z_1^M(t)dM(t),\qq t\in[s,T],\\
	\ns\ds\eta_1(T)=g_1,\ea\right.\ee
	for some undetermined $\g_1(\cd)\in L^2_{\dbF^M}(s,T;\dbR^n)^\perp$, and
	\bel{eta_2}\left\{\2n\ba{ll}
	\ds d\eta_2(t)=\g_2(t)dt+\z_2^M(t)dM(t),\qq t\in[s,T],\\
	\ns\ds\eta_2(T)=g_2,\ea\right.\ee
	for some undetermined $\g_2(\cd)\in L^2_{\dbF^M}(s,T;\dbR^n)$. Similar to the above, by saying that $(\eta_1(\cd),\z_1(\cd),\z_1^M(\cd))$ to be a predictable solution of \rf{eta_1}, we mean that
	\bel{eta_1-z_1}(\eta_1(\cd),\z_1(\cd),\z_1^M(\cd))\in L^2_{\dbF^M}(s,T;\dbR^n)^\perp\times L^2_\dbF(s,T;\dbR^n)\times L^2_{\dbF_-^M}(s,T;\dbR^n)^\perp,\ee
	and \rf{eta_1} is satisfied in the usual sense; and by saying that $(\eta_2(\cd),\z_2^M(\cd))$ to be a predictable solution of \rf{eta_2}, we mean that
	\bel{eta_2-z_2}(\eta_2(\cd),\z_2^M(\cd))\in L^2_{\dbF^M}(s,T;\dbR^n)\times L^2_{\dbF_-^M}(s,T;\dbR^n),\ee
	and \rf{eta_2} is satisfied in the usual sense. Note that due to \rf{2.7}, $\z_1(\cd)\in
	L^2_\dbF(s,T;\dbR^n)$ (not necessarily in $L^2_{\dbF^M}(s,T;\dbR^n)^\perp$). Then, by It\^o's formula (see Appendix B for details),
	\bel{Lagrange}\ba{ll}
	\ns\ds J(s,\xi_1,\xi_2;u_1(\cd),u_2(\cd))\equiv J(s,\xi;u(\cd))\\
	\ns\ds=\frac12\sum_{i=1}^2\dbE\Big\{\lan P_i(s)\xi_i,\xi_i\ran+2\lan\eta_i(s),\xi_i\ran+\int_s^T\[\lan[\G_i+\cQ_i(P_i,P_1)]X_i,X_i\ran
	+2\lan\cS_i(P_i,P_1)X_i+\Br_i,u_i\ran\\
	\ns\ds\qq\qq+\lan\cR_i(P_1)u_i,u_i\ran+2\lan\g_i+\Bq_i,X_i\ran+2\lan\eta_i,b_i\ran
	+2\lan\Pi_i[\z_1],\si_i\ran+\lan P_1\si_i,\si_i\ran\]dt\Big\},\ea\ee
	where
	\bel{cQ-cR-cS}\ba{ll}
	\ns\ds\cQ_i(\cd)\equiv\cQ_i(P_i,P_1)(\cd):=  P_i(\cd)A_i(\cd)+A_i(\cd)^\top P_i(\cd)+C_i(\cd)^\top P_1(\cd)C_i(\cd) +Q_i(\cd) ,\\
	\ns\ds\cR_i(\cd)\equiv\cR_i(P_1)(\cd):= R_i(\cd)+D_i(\cd)^\top P_1(\cd)D_i(\cd)  ,\\
	\ns\ds\cS_i(\cd)\equiv\cS_i(P_i,P_1)(\cd)=B_i(\cd)^\top P_i(\cd)+ D_i(\cd)^\top P_1(\cd) C_i(\cd)+ S_i(\cd),\q i=1,2,\ea\eel
	and
	\bel{Bq}\left\{\2n\ba{ll}
	\ds\Bq_i:=A_i^\top\eta_i+C_i^\top\Pi_i[\z_1]+P_ib_i+C_i^\top P_1\si_i+q_i,\\
	\ns\ds\Br_i:=B_i^\top\eta_i+D_i^\top\Pi_i[\z_1]+D_i^\top P_1\si_i+r_i,\ea\qq i=1,2.\right.\ee
	This finishes the first step: The cost functional \rf{Lagrange} contains only the integral terms. Such a functional is said to be of the {\it Lagrange form}. Now, we perform the second step: completing the square. Assume the following:
	\bel{cR>0}\cR_i(P_1)\ges0,\qq\cS_i(P_i,P_1)X_i+\Br_i\in\sR\big(\cR_i(P_1)\big),\qq i=1,2.\ee
	Then we have
	$$\ba{ll}
	\ns\ds2\lan\cS_iX_i+\Br_i,u_i\ran+\lan\cR_iu_i,u_i\ran\\
	\ns\ds=|\cR_i^\frac12u_i+[\cR_i^\dag]^\frac12[\cS_iX_i+\Br_i]|^2
	-\lan\cR_i^\dag[\cS_iX_i+\Br_i],\cS_i+\Br_i\ran\\
	\ns\ds=|\cR_i^\frac12u_i+[\cR_i^\dag]^\frac12[\cS_iX_i+\Br_i]|^2
	-\lan\cS_i^\top\cR_i^\dag\cS_iX_i,X_i\ran-2\lan\cS_i^\top\cR_i^\dag\Br_i,X_i\ran-\lan\cR_i^\dag
	\Br_i,\Br_i\ran.\ea$$
	Hence, \rf{Lagrange} becomes
	\bel{Lagrange2}\ba{ll}
	\ns\ds J(s,\xi;u(\cd))=\frac12\sum_{i=1}^2\dbE\Big\{\lan P_i(s)\xi_i,\xi_i\ran+2\lan\eta_i(s),\xi_i\ran\\
	\ns\ds\qq\q+\int_s^T\3n\[\lan(\G_i+\cQ_i-\cS_i^\top\cR_i^\dag\cS_i)X_i,
	X_i\ran+|\cR_i^\frac12u_i+(\cR_i^\dag)^\frac12(\cS_iX_i+\Br_i)|^2\\
	\ns\ds\qq\qq+2\lan\g_i+\Bq_i-\cS_i^\top\cR_i^\dag\Br_i,X_i\ran+2\lan\eta_i,b_i\ran
	+2\lan\Pi_i[\z_1],\si_i\ran+\lan P_1\si_i,\si_i\ran-\lan\cR_i^\dag\Br_i,\Br_i\ran\]dt\Big\}.\ea\ee
	This gives the completion of squares involving controls $u_1(\cd)$ and $u_2(\cd)$. The third step is to make a right choice of $\G_i(\cd)$ and $\g_i(\cd)$. We make the following natural choices:
	\bel{G_i}\ba{ll}
	\ns\ds\G_i=-\big[\cQ_i(P_i,P_1)-\cS_i(P_i,P_1)^\top\cR_i(P_1)^\dag\cS_i(P_i,P_1)\big]\\
	\ns\q=-\big[P_iA_i+A_i^\top P_i+C_i^\top P_1C_i+Q_i\\
	\ns\ds\qq\qq-(P_iB_1+C_i^\top P_1D_i+S_i^\top)(R_i+D_i^\top P_1D_i)^\dag(B_i^\top
	P_i+D_i^\top P_1C_i+S_i)\big],\q i=1,2,\ea\ee
	and
	\bel{g_i}\ba{ll}
	\ns\ds\g_i=-\big[\Bq_i-\cS_i(P_i,P_1)^\top\cR_i(P_1)^\dag\Br_i\big]\\
	\ns\ds\q=-\big[A_i^\top\eta_i+C_i^\top\Pi_i[\z_1]+P_ib_i+C_i^\top P_1\si_i+q_i\\
	\ns\ds\qq\q-(P_iB_i+C_i^\top P_1D_i+S_i^\top)(R_i+D_i^\top P_1D_i)^\dag(B_i^\top\eta_i+D_i^\top\Pi_i[\z_1]+D_i^\top P_1\si_i+r_i)\big].\ea\ee
	Then \rf{P_i} reads completely in details as
	\bel{Riccati1}\left\{\2n\ba{ll}
	\ds dP_i=-\big[P_iA_i+A_i^\top P_i+C_i^\top P_iC_i+Q_i\\
	\ns\ds\qq\q-(P_iB_i\1n+\1n C_i^\top P_1D_i\1n+\1n S_i^\top)(R_i\1n+\1n D_i^\top P_1D_i)^{-1}(B_i^\top P_i
	\1n+\1n D_i^\top P_1C_i\1n+\1nS_i)\big]dt\1n+\1n\L_i^MdM,\q t\1n\in\1n[s,T],\\
	\ns\ds P_i(T)=G_i,\ea\right.\ee
	which is called the {\it backward stochastic differential Riccati equation} (BSDRE, for short), \rf{eta_1}--\rf{eta_2} read
	\bel{BSDE-eta_1}\left\{\2n\ba{ll}
	\ds d\eta_1=-\big[A_1^\top\eta_1+C_1^\top\Pi_1[\z_1]+P_1b_1
	+C_1^\top P_1\si_1+q_1\\
	\ns\ds\qq\q-(P_1B_1+C_1^\top P_1D_1+S_1^\top)(R_1+D_1^\top P_1D_1)^\dag(B_1^\top\eta_1+D_1^\top\Pi_1[\z_1]+D_1^\top P_1\si_1+r_1)\big]dt\\
	\ns\ds\qq\qq\qq+\z_1dW+\z_1^MdM,\qq t\in[s,T],\\
	\ds d\eta_2=-\big[A_2^\top\eta_2+C_2^\top\Pi_2[\z_1]+P_2b_2+C_2^\top P_1\si_2+q_2\\
	\ns\ds\qq\q-(P_2B_2+C_2^\top P_1D_2+S_2^\top)(R_2+D_2^\top P_1D_2)^\dag(B_2^\top\eta_2+D_2^\top\Pi_2[\z_1]+D_2^\top P_1\si_2+r_2)\big]dt\\
	\ns\ds\qq\qq\qq+\z_2^MdM,\qq t\in[s,T],\\
	\ns\ds\eta_1(T)=g_1,\qq\eta_2(T)=g_2,\ea\right.\ee
	Note that the BSDE for $\eta_1$ has a $dW$ term, but the BSDE for $\eta_2$ does not have that term. The cost functional \rf{Lagrange2} becomes
	\bel{Lagrange3}\ba{ll}
	\ns\ds J(s,\xi;u(\cd))=\frac12\sum_{i=1}^2\dbE\Big\{\lan P_i(s)\xi_i,\xi_i\ran+2\lan\eta_i(s),\xi_i\ran+\2n\int_s^T\3n\[\cR_i(P_1)^\frac12u_i\1n+\1n[\cR_i(P_1)^\dag]^\frac12
	[\cS_i(P_i,P_1)X_i\1n+\1n\Br_i]|^2\1n\\
	\ns\ds\qq\qq\qq\qq+2\lan\eta_i,b_i\ran+2\lan\Pi_i[\z_1],\si_i\ran+\lan P_1\si_i,\si_i\ran-\lan\cR_i(P_1)^\dag\Br_i,\Br_i\ran\]dt\Big\}.\ea\ee
	Now, we come to Step 4, to determine the optimal controls. From the above, we see that by taking
	\bel{bar u}\ba{ll}
	\ns\ds\bar u_i=-\cR_i(P_1)^\dag[\cS_i(P_i,P_1)\bar X_i+\Br_i]+[I-\cR_i(P_1)^\dag\cR_i(P_1)]\m_i\\
	\ns\ds\q=-(R_i+D_i^\top P_1D_i)^{-1}\((B_i^\top P_i+D_i^\top P_1C_i
	+S_i)\bar X_i+B_i^\top\eta_i+D_i^\top\Pi_i[\z_1]+D_i^\top P_1\si_i+r_i\)\\
	\ns\ds\qq+[I-\cR_i(P_1)^\dag\cR_i(P_1)]\m_i,\ea\ee
	with $(\bar X_1(\cd),\bar X_2(\cd))$ being the corresponding state, one has, from \rf{Lagrange3}, that
	\bel{Lagrange4}\ba{ll}
	\ns\ds J(s,\xi;u(\cd))\ges J(s,\xi;\bar u(\cd))=\inf_{u(\cd)\in\sU[s,T]}J(s,\xi;u(\cd))\\
	\ns\ds=\frac12\sum_{i=1}^2\dbE\Big\{\lan P_i(s)\xi_i,\xi_i\ran\1n+\1n2\lan\eta_i(s),\xi_i\ran\1n+\2n\int_s^T\3n\[\lan\eta_i,b_i\ran
	+2\lan\Pi_i[\z_1],\si_i\ran+\lan P_1\si_i,\si_i\ran-\lan\cR_i(P_1)^\dag\Br_i.\Br_i\ran\]dt\Big\}.\ea\ee
	Or, equivalently,
	\bel{V}\ba{ll}
	\ns\ds V(s,\xi)=\frac12\sum_{i=1}^2\dbE\Big\{\lan P_i(s)\xi_i,\xi_i\ran+2\lan\eta_1(s),\xi_i\ran+\int_s^T\[\lan\eta_i,b_i\ran
	+2\lan\Pi_i[\z_1],\si_i\ran+\lan P_1\si_i,\si_i\ran\\
	\ns\ds\qq\qq\qq-\big|[\cR_i(P_1)^\dag]^\frac12(B_i^\top\eta_i+D_1^\top\Pi_i[\z_1]+D_i^\top P_i\si_i+r_i)\big|^2\]dt\Big\}.\ea\ee
	The above finishes the classical approach to Problem (MF-LQ). We can state the above result as follows.

	\bt{classical} \sl Let {\rm (H2.1)} hold. Let BSDRE \rf{Riccati1} admit a predictable solution $(P_i(\cd),\L_i^M(\cd))$, satisfying
	\bel{cR>0*}\ba{ll}
	\ns\ds\sR\(B_i^\top P_i+D_i^\top P_1C_i+S_i\)\subseteq\sR\(R_i+D_i^\top P_1D_i\),\\
	\ns\ds R_i+D_i^\top P_1D_i\ges0.\ea\ee
	Let BSDEs \rf{BSDE-eta_1} admit a predictable solution $(\eta_1(\cd),\z_1(\cd),\z_1^M(\cd),\eta_2(\cd),\z_2^M(\cd))$ satisfying
	\bel{B_1eta_1}\left\{\2n\ba{ll}
	\ns\ds B_i^\top\eta_i+D_i^\top\Pi_i[\z_1]+D_i^\top P_1\si_i+r_i\in\sR(\cR_i(P_1)),\\
	%
	%
	\ns\ds\cR_1(P_1)^\dag(B_1^\top\eta_1+D_1^\top\Pi_1[\z_1]+D_1^\top P_1\si_1+r_1)\in L^2_{\dbF^M}(s,T;\dbR^m)^\perp,\\
	\ns\ds\cR_2(P_1)^\dag(B_2^\top\eta_2+D_2^\top\Pi_2[\z_1]+D_2^\top P_1 \si_2+r_2) \in L^2_{\dbF^M}(s,T;\dbR^m).
	\ea\right.\ee
	Then Problem (MF-LQ) admits an optimal control $\bar u(\cd)=\bar u_1(\cd)+\bar u_2(\cd)$ given by \rf{bar u} with $\bar X(\cd)=\bar X_1(\cd)+\bar X_2(\cd)$ being the corresponding state process, and the value function $V(\cd\,,\cd)$ is given by \rf{V}.
	
	\et

	The above result seems pretty good, and it gives a sufficient condition for Problem (MF-LQ) to have an optimal control. However, there are at least two major shortcomings:
	
	\ms
	
	$\bullet$ How the usual Pontryagin type maximum principle is related to Problem (MF-LQ)?
	
	\ms
	
	$\bullet$ It is not clear if the imposed conditions (the solvability of BSDREs \rf{Riccati1}, and BSDEs \rf{BSDE-eta_1} with properties \rf{cR>0*}--\rf{B_1eta_1}) are necessary for Problem (MF-LQ) to have an optimal control.
	
	\ms

	In the rest of the paper, we are going to look at Problem (MF-LQ) from a different angle, try to answer the above questions, inspired by the works of Li--Sun--Yong \cite{Li-Sun-Yong-2016}, and Sun--Yong \cite{Sun-Yong-2020a,Sun-Yong-2020b}.

	\section{Open-Loop Solvability of Problem (MF-LQ)}

	This section is essentially answering the first question in the above, namely, we want to characterize the optimal control of Problem (MF-LQ) by the variational method. We now introduce the following definition.

	\bde{open-optimal control} \rm (i) Problem (MF-LQ) is said to be {\it finite} at $(s,\xi)\in\sD$ if
	\bel{>-infty}\inf_{u(\cd)\in\sU[s,T]}J(s,\xi;u(\cd))>-\infty.\ee
	If the above is true at any $(s,\xi)\in\sD$, we simply say that Problem (MF-LQ) is finite.
	
	\ms
	
	(ii) Problem (MF-LQ) is said to be ({\it uniquely})  {\it open-loop solvable} at
	$(s,\xi)\in\sD$, if there exists a (unique) $\bar u(\cd)\in\sU[s,T]$ such that
	\rf{J=inf-open} holds. In this case, $\bar u(\cd)$ is called an {\it open-loop optimal
		control} of Problem (MF-LQ) at $(s,\xi)\in\sD$,  $\bar X(\cd)$ and $(\bar X(\cd),
	\bar u(\cd))$ are called an {\it open-loop optimal process} and an {\it open-loop
		optimal pair}, respectively. Problem (MF-LQ) is said to be ({\it uniquely}) {\it open-loop solvable} if it is (uniquely) open-loop solvable for any $(s,\xi)\in\sD$.
	
	\ede
	
	Now, we look Problem (MF-LQ) from an abstract viewpoint. For $(s,\xi)\in\sD$ and $u(\cd)\in\sU[s,T]$, by the linearity of the state equation \rf{SDE1}, we have
	$$X(\cd)\equiv X(t;s,\xi,u(\cd))=\F_1(t,s)[u(\cd)]+\F_0(t,s)\xi+\f_0[b(\cd),\si(\cd)](t),\qq t\in[s,T],$$
	for some linear bounded operators $\F_1(\cd\, ,s):\sU[s,T]\to L^2_\dbF(s,T;\dbR^n)$,
	$\F_0(\cd\,,s):L^2_{\cF_s}(\O;\dbR^n)\to L^2_\dbF(s,T;\dbR^n)$, and a linear map $(b(\cd),\si(\cd))\mapsto\f_0[b(\cd).\si(\cd)]$. Consequently,
	\bel{cost-H}J(s,\xi;u(\cd))=\frac12\[\lan\Psi_2u(\cd),u(\cd)\ran+2\lan\psi_1,u(\cd)\ran
	+\psi_0\],\ee
	for some
	\bel{Psi_2}\ba{ll}
	\ns\ds\Psi_2:\sU[s,T]\to\sU[s,T]\q\hb{is linear bounded and self-adjoint},\\
	\ns\ds(\xi,b(\cd),\si(\cd),q(\cd),r(\cd),g)\mapsto\psi_1\equiv\psi_1(\xi,b(\cd),\si(\cd),
	q(\cd),r(\cd),g)\q\hb{is linear},\\
	\ns\ds(\xi,b(\cd),\si(\cd),q(\cd),r(\cd),g)\mapsto\psi_0\equiv\psi_0(\xi,b(\cd),\si(\cd),
	q(\cd),r(\cd),g)\q\hb{is quadratic}.\ea\ee
	Clearly, when $(\xi,b(\cd),\si(\cd),q(\cd),r(\cd),g)=0$,
	\bel{cost-H0}J^0(s,0;u(\cd))=\frac12\lan\Psi_2u(\cd),u(\cd)\ran.\ee
	From the above, we see that Problem (MF-LQ) is equivalent to the problem of minimizing quadratic functional \rf{cost-H} in the Hilbert space $\sU[s,T]$. Therefore, the following lemma is very useful, which can be found in Mou--Yong \cite{Mou-Yong-2006} (see also Sun--Yong \cite{Sun-Yong-2020a}).
	
	\bl{in-H} \sl Let $J(s,\xi;u(\cd))$ be defined by \rf{cost-H}. If
	\bel{finite}\inf_{u(\cd)\in\sU[s,T]}J(s,\xi;u(\cd))>-\infty,\ee
	for some $(s,\xi)\in\sD$, then
	\bel{Psi>0}\Psi_2\ges0.\ee
	If \rf{Psi>0} holds, then there exists a $\bar u(\cd)\in\sU[s,T]$ at which the map
	$u(\cd)\mapsto J(s,\xi;u(\cd))$ achieves its minimum if and only if
	\bel{range}\psi_1\in\sR\big(\Psi_2\big).\ee
	In this case, any $\bar u(\cd)\in\sU[s,T]$ is a minimizer of $u(\cd)\mapsto J(s.\xi;u(\cd))$
	if and only if $\bar u(\cd)$ is a solution of the following equation:
	\bel{equation0}\cD_uJ(s,\xi;\bar u(\cd))=\Psi_2\bar u+\psi_1=0,\ee
	where the
	left-hand side of the above is the Fr\'echet derivative of the map $u(\cd)\mapsto
	J(s,\xi;u(\cd))$ at $\bar u(\cd)$. Moreover,
	\bel{bar u*}\bar u=-\Psi_2^\dag\psi_1+(I-\Psi_2^\dag\Psi_2)\m,\ee
	where $\Psi_2^\dag$ is the pseudo-inverse of $\Psi_2$, and for some $\m\in\sU[s,T]$. In this case, the value function is given by
	\bel{Value}V(s,\xi)=\frac12\big(\psi_0-\lan\Psi_2^\dag\psi_1,\psi_1\ran\big),\qq
	V^0(s,\xi)=-\frac12\lan\Psi_2^\dag\psi_1,\psi\ran,\ee
	and consequently, for some constant $K_0>0$,
	\bel{|V^0|}|V^0(s,\xi)|\les K_0|\xi|^2,\qq\forall(s,\xi)\in\sD.\ee
	Further, if $\Psi_2$ is injective, then $\bar u$ is unique, and is given by
	$$\bar u=-\Psi_2^{-1}\psi_1.$$
	
	\el
	
	Note that condition \rf{Psi>0} is equivalent to the convexity of $u(\cd)\mapsto
	J^0(s,\xi;u(\cd))$, which is equivalent to the convexity of $u(\cd)\mapsto
	J(s,\xi;u(\cd))$ for some $\xi\in L^2_{\cF_s}(\O;\dbR^n)$, or, equivalently, for any
	$\xi\in L^2_{\cF_s}(\O;\dbR^n)$. The second part of the above lemma can also be stated as follows:
	Under condition \rf{Psi>0}, a control $u(\cd)\in\sU[s,T]$ is an open-loop optimal control if and only if
	\rf{equation0} holds. Therefore, it suffices to determine $\cD_uJ(s,\xi;u(\cd))$ and to
	solve equation \rf{equation0}. The following result is for this goal.
	
	\bt{LQ-open} \sl Let {\rm(H2.1)} hold.
	
	\ms
	
	{\rm(i)} If Problem {\rm(MF-LQ)} is finite, then $u(\cd)\mapsto J(s,\xi;u(\cd))$ is convex for some $\xi\in L^2_{\cF_s}(\O;\dbR^n)$ (or, for all $\xi\in L^2_{\cF_s}(\O;\dbR^n)$).
	
	\ms
	
	{\rm(ii)} Suppose the map $u(\cd)\mapsto J(s,\xi;u(\cd))$ is convex. Then a
	state-control pair $(\bar X(\cd),\bar u(\cd))$ is an optimal open-loop pair of
	Problem {\rm(MF-LQ)} if and only if there exists a triple $(\bar Y(\cd),\bar Z(\cd),
	\bar Z^M(\cd))$ of $\dbF$-adapted/predictable processes so that the following BSDE is satisfied:
	\bel{BSDE1}\left\{\2n\ba{ll}
	\ns\ds d\bar Y(t)\1n=\1n-\[A(t)^\top\bar Y(t)\1n+\1n\bar A(t)^\top\bar Y_2(t)\1n+\1n C(t)^\top\bar Z(t)\1n+\1n\bar C(t)^\top\bar Z_2(t)\1n+\1n Q(t)\bar X(t)\1n+\1n\bar Q(t)\bar X_2(t)\\
	\ns\ds\qq\qq+S(t)^\top\bar u(t)+\bar S(t)^\top\bar u_2(t)+q(t)+\bar q(t)\]dt+\bar Z(t)dW(t)+\bar Z^M(t)dM(t),\q t\1n\in\1n[s,T],\\
	\ns\ds\bar Y(T)=G\bar X(T)+\bar G\bar X_2(T)+g+\bar g.\ea\right.\ee
	where $\bar Y_2(\cd)=\Pi_2[\bar Y(\cd)]$, etc., and, in addition, the following {\it stationarity condition} holds:
	\bel{stationarity}\ba{ll}
	\ns\ds B(t)^\top\bar Y(t)+\bar B(t)\bar Y_2(t)+D(t)^\top\bar Z(t)+\bar D(t)^\top\bar Z_2(t)\\
	\ns\ds\q+S(t)\bar X(t)+\bar S(t)\bar X_2(t)+R(t)\bar u(t)+\bar R(t)\bar u_2(t)+r(t)+\bar r(t)=0,\q\ae t\in[s,T],~\as\ea\ee
	
	\et
	
	Note that BSDE \rf{BSDE1} is driven by Brownian motion $W(\cd)$ and martingale
	$M(\cd)$, with the drift independent of $\bar Z^M(\cd)$. The above triple
	$(\bar Y(\cd),\bar Z(\cd),\bar Z^M(\cd))$ is called a {\it predictable solution} of BSDE \rf{BSDE1}.
	By the martingale representation theorem, similar to the standard BSDE (driven by the
	Brownian motion only), one can have the well-posedness of such a BSDE, as long as
	$(\bar X(\cd),\bar u(\cd))$ and all the coefficients are given satisfying (H2.1) (see \cite{Carbone-2008}).
	We point out that both $\bar Z(\cd)$ and $\bar Z^M(\cd)$ are merely $\dbF\equiv\dbF^W
	\vee\dbF^M$-adapted/predictable. The above (ii) is essentially the Pontryagin type maximum principle. Since our problem is linear-quadratic, the set of the conditions is not only necessary, but also sufficient. The proof of the above result is very similar to a relevant result in Sun--Yong \cite{Sun-Yong-2020a}. However, for readers' convenience, we present a proof here.
	
	\ms
	
	\it Proof. \rm (i) is obvious. We now prove (ii).
	
	\ms
	
	(ii) \rm Let $(\bar X(\cd),\bar u(\cd))$ be a given open-loop optimal pair, and $u(\cd)\in\sU[s,T]$. let
	$$u^\e(\cd)=\bar u(\cd)+\e u(\cd),\q X^\e(\cd)=X(\cd\,;s,\xi,u^\e(\cd)).$$
	Clearly,
	$$\lim_{\e\to0}\dbE\int_s^T\Big|\frac{X^\e(t)-\bar X(t)}\e-X^0(t)\Big|^2dt=0,$$
	where $X^0(\cd)$ is the solution to the following homogeneous system (the same as
	\rf{SDE1-h}):
	\bel{SDE*}\left\{\2n\ba{ll}
	\ds dX^0(t)=\(A(t)X^0(t)+\bar A(t)X_2^0(t)+B(t)u(t)+\bar B(t)u_2(t)\)dt,\\
	\ns\ds\qq\qq\qq+\(C(t)X^0(t)+\bar C(t)X_2^0(t)+D(t)u(t)+\bar D(t)u_2(t)\)dW(t),\q t\ges s,\\
	\ns\ds X^0(s)=0,\ea\right.\ee
	Thus,
	\bel{0<J'}\ba{ll}
	\ns\ds0\les \frac1\e\[J(s,\xi;u^\e(\cd))-J(s,\xi;\bar u(\cd))\]\\
	\ns\ds\q=\frac 1{2\e}\[\dbE\int_s^T\(\lan QX^\e,X^\e\ran-\lan Q\bar X,\bar X\ran+\lan\bar QX_2^\e,X_2^\e\ran-\lan\bar Q\bar X_2,\bar X_2\ran\\
	\ns\ds\qq\qq+2\lan SX^\e,u^\e\ran-2\lan S\bar X,\bar u\ran+2\lan\bar SX_2^\e,u_2^\e\ran-2\lan\bar S\bar X_2,\bar u_2\ran\\
	\ns\ds\qq\qq+\lan Ru^\e,u^\e\ran-\lan R\bar u,\bar u\ran+\lan\bar Ru_2^\e,u^\e_2\ran-\lan\bar R\bar u_2,\bar u_2\ran\\
	\ns\ds\qq\qq+2\lan q,X^\e-\bar X\ran+2\lan\bar q,X_2^\e-\bar X_2\ran
	+2\lan r,u^\e-\bar u\ran+2\lan\bar r,u_2^\e-\bar u_2\ran\)dt\\
	\ns\ds\qq\qq+\lan GX^\e(T),X^\e(T)\ran-\lan G\bar X(T),\bar X(T)\ran+\lan
	\bar GX_2^\e(T)],X_2^\e(T)]\ran\\
	\ns\ds\qq\qq-\lan\bar G\bar X_2(T),\bar X_2(T)\ran+2\lan g,X^\e(T)
	-\bar X(T)\ran+2\lan\bar g,X_2^\e(T)-\bar X_2(T)\ran\]\\
	\ns\ds\q\to\dbE\[\int_s^T\(\lan Q\bar X,X^0\ran+\lan\bar Q\bar X_2,X_2^0\ran+\lan S\bar X,u\ran+\lan SX^0,\bar u\ran+\lan\bar S\bar X_2,u_2\ran+\lan\bar SX_2^0,\bar u_2\ran\\
	\ns\ds\qq\qq+\lan R\bar u,u\ran+\lan\bar R\bar u_2,u_2\ran+\lan q,X^0\ran+\lan\bar q,X_2^0\ran
	+\lan r,u\ran+\lan\bar r,u_2\ran\)dt\\
	\ns\ds\qq\qq+\lan G\bar X(T),X^0(T)\ran+\lan
	\bar G\bar X_2(T),X_2^0(T)\ran+\lan g,X^0(T)\ran+\lan\bar g,X_2^0\ran\]\\
	\ns\ds\q=\dbE\[\int_s^T\(\lan X^0,Q\bar X+\bar Q\bar X_2+S^\top\bar u+\bar S^\top\bar u_2+q+\bar q\ran+\lan u,S\bar X+\bar S\bar X+R\bar u+\bar R\bar u_2+r+\bar r\ran\)dt\\
	\ns\ds\qq\qq+\lan X^0(T),G\bar X(T)+\bar G\bar X_2(T)+g+\bar g\ran\].\ea\ee
	The above is true for all $u(\cd)\in\sU[t,T]$. However, the above relation is not explicit enough since $X^0(\cd)$ depends on $u(\cd)$. Now, we would like to use duality to transform $X^0(\cd)$ directly in terms of $u(\cd)$. To this end, let $(\bar Y(\cd),\bar Z(\cd),\bar Z^M(\cd))$ be the predictable solution of BSDE \rf{BSDE1} which we now write it in the following compact form (with $\bar\G(\cd)$ being its drift term):
	\bel{BSDE1*}\left\{\2n\ba{ll}
	\ds d\bar Y(t)=-\bar\G(t)dt+\bar Z(t)dW(t)+\bar Z^M(t)dM(t),\qq t\in[s,T],\\
	\ns\ds\bar Y(T)=\bar Y_T\equiv G\bar X(T)+\bar G\bar X_2(T)+g+\bar g.\ea\right.\ee
	Using It\^o's formula, we have
	$$\ba{ll}
	\ns\ds d\lan X^0(t),\bar Y(t)\ran=\[\lan A(t)X^0(t)+\bar A(t)X_2^0(t)+B(t)u(t)+\bar B(t)u_2(t),\bar Y(t)\ran\\
	\ns\ds\qq\qq\qq\qq-\lan X^0(t),\bar\G(t)\ran+\lan C(t)X^0(t)+\bar C(t)X_2^0(t)+D(t)u(t)+\bar D(t)u_2(t),\bar Z(t)\ran\]dt\\
	\ns\ds\qq\qq\qq\qq+\lan C(t)X^0(t)+\bar C(t)X_2^0(t)+D(t)u(t)+\bar D(t)u_2(t),\bar Y(t)\ran dW(t)\\
	\ns\ds\qq\qq\qq\qq+\lan X^0(t),\bar Z(t)\ran dW(t)+\lan X^0(t),\bar Z^M(t)\ran dM(t).\ea$$
	Thus,
	$$\ba{ll}
	\ns\ds\dbE\[\lan X^0(T),\bar Y_T\ran\]=\dbE\int_s^T\[\lan A(t)X^0(t)+\bar A(t)X_2^0(t)+B(t)u(t)+\bar B(t)u_2(t),\bar Y(t)\ran\\
	\ns\ds\qq\qq\qq\qq-\lan X^0(t),\bar\G(t)\ran+\lan C(t)X^0(t)+\bar C(t)X_2^0(t)+D(t)u(t)+\bar D(t)u_2(t),\bar Z(t)\ran\]dt.\ea$$
	Then \rf{0<J'} gives (suppressing $t$)
	$$\ba{ll}
	\ns\ds0\les\lim_{\e\to0}\frac{J(s,\xi;u^\e(\cd))-J(s,\xi;\bar u(\cd))}\e=\dbE\Big\{\int_s^T\(\lan X^0,Q\bar X+\bar Q\bar X_2+S^\top\bar u+\bar S^\top\bar u_2+q+\bar q\ran\\
	\ns\ds\qq\qq+\lan u,S\bar X+\bar S\bar X_2+R\bar u+\bar R\bar u_2+r+\bar r\ran\)dt+\lan X^0(T),G\bar X(T)+\bar G\bar X_2(T)+g+\bar g-\bar Y(T)\ran\Big\}\\
	\ns\ds\qq\qq+\dbE\int_s^T\[\lan AX^0+\bar AX_2^0+Bu+\bar Bu_2,\bar Y\ran-\lan X^0,\bar\G\ran+\lan CX^0+\bar CX^0_2+Du+\bar Du_2,\bar Z\ran\]dt\\
	\ns\ds\q=\dbE\Big\{\int_s^T\2n\(\lan X^0,Q\bar X\1n+\1n\bar Q\bar X_2\1n+\1n S^\top\bar u\1n+\1n\bar S^\top\bar u_2\1n+\1n q\1n+\1n\bar q\1n+\1n A^\top\bar Y\1n+\1n\bar A^\top\bar Y_2\1n+\1n C^\top\bar Z\1n+\1n\bar C^\top\bar Z_2-\bar\G\ran\\
	\ns\ds\qq\qq+\lan u,S\bar X+\bar S\bar X_2+R\bar u+\bar R\bar u_2+r+\bar r+B^\top\bar Y+\bar B^\top\bar Y_2+D^\top\bar Z+\bar D^\top\bar Z_2\ran\)dt\\
	\ns\ds\qq\qq\qq\qq\qq+\lan X^0(T),G\bar X(T)+\bar G\bar X_2(T)+g+\bar g-\bar Y(T)\ran\Big\}\\
	\ns\ds\q=\dbE\int_s^T\2n\lan\cD_uJ(s,\xi;\bar u(\cd)),u\ran dt.\ea$$
	Here, we have used the definition of $\bar\G$, the terminal condition in \rf{BSDE1} (or \rf{BSDE1*}), and
	$$\cD_uJ(s,\xi;\bar u(\cd))=S\bar X+\bar S\dbE^M[\bar X]+R\bar u+\bar R\dbE^M[\bar u]+r+\bar r+B^\top\bar Y+\bar B\dbE^M[\bar Y]+D^\top\bar Z+\bar D^\top\dbE^M[\bar Z].$$
	Hence, $\cD_uJ(s,\xi;\bar u(\cd))=0$, i.e., the stationarity condition
	\rf{stationarity} holds.
	
	\ms
	
	Conversely, under the convexity condition of $u(\cd)\mapsto J(s,\xi;u(\cd))$, if
	$(\bar X(\cd),\bar u(\cd))$ is a state-control pair, and $(\bar Y(\cd),\bar Z(\cd),
	\bar Z^M(\cd))$ satisfy \rf{BSDE1} such that \rf{stationarity}, then it means that
	\rf{equation0} holds. Hence, $(\bar X(\cd),\bar u(\cd))$ is an open-loop optimal pair
	of Problem (MF-LQ). \endpf
	
	\ms
	
	The state equation \rf{SDE1} and the adjoint equation \rf{BSDE1}, together with the stationarity condition \rf{stationarity}, can be put together as follows, called {\it optimality system}:
	\bel{FBSDE1}\left\{\2n\ba{ll}
	\ds d\bar X(t)=\[A(t)\bar X(t)+\bar A(t)\bar X_2(t)]+B(t)\bar u(t)+\bar B(t)\bar u_2(t)+b(t)\]dt,\\
	\ns\ds\qq\qq+\[C(t)\bar X(t)+\bar C(t)\bar X_2(t)]+D(t)\bar u(t)+\bar D(t)\bar u_2(t)+\si(t)\]dW(t),\qq t\in[s,T],\\
	\ns\ds d\bar Y(t)\1n=\1n-\[A(t)^\top\bar Y(t)\1n+\1n\bar A(t)^\top\bar Y_2(t)\1n+\1n C(t)^\top\bar Z(t)\1n+\1n\bar C(t)^\top\bar Z_2(t)\1n+\1n Q(t)\bar X(t)\1n+\1n\bar Q(t)\bar X_2(t)\\
	\ns\ds\qq\qq+S(t)^\top\bar u(t)+\bar S(t)^\top\bar u(t)+q(t)+\bar q(t)\]dt+\bar Z(t)dW(t)+\bar Z^M(t)dM(t),\q t\1n\in\1n[s,T],\\
	\ns\ds\bar X(s)=\xi,\qq\bar Y(T)=G\bar X(T)+\bar G\bar X_2(T)+g+\bar g,\\
	\ns\ds B(t)^\top\bar Y(t)+\bar B(t)^\top\bar Y_2(t)+D(t)^\top\bar Z(t)+\bar D(t)^\top\bar Z_2(t)\\
	\ns\ds\q+S(t)\bar X(t)+\bar S(t)\bar X_2(t)+R(t)\bar u(t)+\bar R(t)\bar u_2(t)+r(t)+\bar r(t)=0,\q\ae t\in[s,T],~\as\ea\right.\ee
	We may rewrite the above into the following, according to the orthogonal decomposition, dropping the bars in $\bar X_1$, etc. and suppressing $t$, which will have a simpler looking  below.
	\bel{FBSDE2}\left\{\2n\ba{ll}
	\ds dX_1=(A_1X_1+B_1u_1+b_1)dt+(C_1X_1+C_2X_2+D_1u_1+D_2u_2+\si)dW,\q t\in[s,T],\\
	\ns\ds dX_2=(A_2X_2+B_2u_2+b_2)dt,\\
	\ns\ds dY_1=-(A_1^\top Y_1+C_1^\top Z_1+Q_1 X_1+S_1^\top u_1+q_1)dt+ZdW+ Z_1^MdM,\q t\1n\in\1n[s,T],\\
	\ns\ds dY_2=-(A_2^\top Y_2+C_2^\top Z_2+Q_2 X_2+S_2^\top u_2+q_2)dt+Z_2^MdM,\q t\1n\in\1n[s,T],\\
	\ns\ds X_1(s)=\xi_1,\q  X_2(s)=\xi_2,\q Y_1(T)=G_1X_1(T)+g_1,\q
	Y_2(T)=G_2 X_2(T)+g_2,\\
	\ns\ds B_i^\top Y_i+D_i^\top Z_i+S_iX_i+R_iu_i+r_i=0,\q\ae t\in[s,T],~\as\q i=1,2.\ea\right.\ee
	Thus, Theorem \ref{LQ-open} amounts to saying that $u=u_1+u_2$ is an open-loop optimal control of Problem (MF-LQ) if and only if the system \rf{FBSDE2} is solvable.
	
	\ms
	
	Intuitively, it seems that we may further rewrite the above as two FBSDEs:
	\bel{FBSDE2-1}\left\{\2n\ba{ll}
	\ds dX_1=(A_1X_1+B_1u_1+b_1)dt+(C_1X_1+C_2X_2+D_1u_1+D_2u_2+\si)dW,\q t\in[s,T],\\
	\ns\ds dY_1=-(A_1^\top Y_1+C_1^\top Z_1+Q_1 X_1+S_1^\top u_1+q_1)dt+ZdW+ Z_1^MdM,\q t\1n\in\1n[s,T],\\
	\ns\ds X_1(s)=\xi_1,\q Y_1(T)=G_1X_1(T)+g_1,\\
	\ns\ds B_1^\top Y_1+D_1^\top Z_1+S_1X_1+R_1u_1+r_1=0,\q\ae t\in[s,T],~\as,\ea\right.\ee
	and
	\bel{FBSDE2-2}\left\{\2n\ba{ll}
	\ns\ds dX_2=(A_2X_2+B_2u_2+b_2)dt,\\
	\ns\ds dY_2=-(A_2^\top Y_2+C_2^\top Z_2+Q_2 X_2+S_2^\top u_2+q_2)dt+Z_2^MdM,\q t\1n\in\1n[s,T],\\
	\ns\ds X_2(s)=\xi_2,\q Y_2(T)=G_2 X_2(T)+g_2,\\
	\ns\ds B_2^\top Y_2+D_2^\top Z_2+S_2X_2+R_2u_2+r_2=0,\q\ae t\in[s,T],~\as\ea\right.\ee
	However, these two FBSDEs are coupled: The first involves $(X_2,u_2)$ in the diffusion, and the second contains $Z_2$ (in the drift) which can only be determined through the BSDE for $(Y_1,Y_2,Z_1,Z_2)$.
	
	\ms
	
	The following corollary is for the homogeneous Problem (MF-LQ)$^0$, which will be useful later.
	
	\bc{homo} \sl Let {\rm(H2.1)} hold and $u(\cd)\mapsto J^0(s,\xi;u(\cd))$ be convex. Then the 0 control is open-loop optimal for the homogeneous Problem (MF-LQ)$^0$ if and only if the following system is solvable:
	\bel{FBSDE2*}\left\{\2n\ba{ll}
	\ds dX_1=A_1X_1dt+(C_1X_1+C_2X_2)dW,\q t\in[s,T],\\
	\ns\ds dX_2=A_2X_2dt,\qq t\in[s,T],\\
	\ns\ds dY_1=-(A_1^\top Y_1+C_1^\top Z_1+Q_1 X_1)dt+ZdW+ Z_1^MdM,\q t\1n\in\1n[s,T],\\
	\ns\ds dY_2=-(A_2^\top Y_2+C_2^\top Z_2+Q_2 X_2)dt+Z_2^MdM,\q t\1n\in\1n[s,T],\\
	\ns\ds X_1(s)=\xi_1,\q  X_2(s)=\xi_2,\q Y_1(T)=G_1X_1(T),\q
	Y_2(T)=G_2 X_2(T),\\
	\ns\ds B_i^\top Y_i+D_i^\top Z_i+S_iX_i=0,\q\ae t\in[s,T],~\as\q i=1,2.\ea\right.\ee

	\ec
	
	Note that the above result only gives the equivalence between open-loop solvability of Problem (MF-LQ) and that of the FBSDE \rf{FBSDE1} (or \rf{FBSDE2}). Therefore, the existence of an open-loop optimal control is not guaranteed. We now introduce the following further condition.
	
	\ms
	
	{\bf(H4.1)} There exists a $\d>0$ such that
	\bel{uconvexity}J^0(s,0;u(\cd))\ges \d\dbE\int_s^T|u(t)|^2dt,\qq\forall u(\cd)\in \sU[s,T].\ee
	
	\ms
	
	The above condition is equivalent to the uniform convexity of the map $u(\cd)\mapsto
	J(s,\xi;u(\cd))$ for all $(s,\xi)\in\sD$, which means $\Psi_2$ (see \rf{Psi_2}) is
	(uniformly) positive definite on $\sU[0,T]$. Hence, by Lemma \ref{in-H}, one has the following result.
	
	\bp{existence} \sl Under {\rm(H2.1)} and {\rm(H4.1)}, Problem {\rm(MF-LQ)} admits a unique open-loop optimal control. Moreover, the open-loop optimal pair $(\bar X(\cd),\bar u(\cd))$ is determined by \rf{FBSDE1}.
	
	\ep

	\section{Closed-Loop Representation}
	
	We note that although \rf{FBSDE1} (or equivalently, \rf{FBSDE2}) characterizes the open-loop optimal control, it is not practically feasible. Here is the reason: to determine the value $\bar u(t)$ of the open-loop optimal control $\bar u(\cd)$ at the current time $t$, through the stationarity condition, say, under the invertibility condition of $R_1(t)$ and $R_2(t)$, the values $(\bar Y(t),\bar Z(t),\bar Z^M(t))$ are needed; These values are
	determined by solving the BSDE (for $(\bar Y(\cd),\bar Z(\cd),\bar Z^M(\cd))$) with the terminal
	condition involving the future value $\bar X(T)$ of the optimal state process; At
	current time $t$, the future value $\bar X(T)$ of the optimal state process
	$\bar X(\cd)$ is not available. Hence, the open-loop optimal control $\bar u(\cd)$
	cannot be practically constructed through the optimality system \rf{FBSDE1}. We now
	try to obtain a non-anticipating representation of the open-loop optimal control,
	without using the future information of $\bar X(\cd)$. The main idea is inspired by
	the so-called {\it invariant embedding} (due to Bellman--Kalaba--Wing
	\cite{Bellman-Kalaba-Wing-1960}; see also Ma--Protter--Yong \cite{Ma-Protter-Yong-1994}).
	More precisely, we let
	\bel{Y=}Y(t)=P_1(t)\big\{X(t)-\dbE[X(t)]\big\}+P_2(t)\dbE[X(t)]+\eta(t),\qq t\in[s,T],\ee
	for some $\dbF^M$-progressively measurable $\dbS^n$-valued processes $P_1(\cd)$ and $P_2(\cd)$, and $\dbF$-progressively measurable $\dbR^n$-process $\eta(\cd)=\eta_1(\cd)\oplus\eta_2(\cd)$, together with
	\bel{G}G_1X_1(T)+G_2X_2(T)+g_1+g_2=Y(T)=P_1(T)X_1(T)+P_2(T)X_2(T)+\eta_1(T)
	+\eta_2(T).\ee
	Hence, we may let $(P_i(\cd),\L^M_i(\cd))$ be the predictable solution to the BSDEs \rf{P_i}, with $\G_i(\cd)\in L^\infty_{\dbF^M}(s,T;\dbS^n)$ undetermined and $\L_i^M(\cd)\in L^2_{\dbF_-^M}(s,T;\dbS^n)$. We also let $(\eta_1(\cd),
	\z_1(\cd),\z_1^M(\cd))$, and $(\eta_2(\cd),\z_2^M(\cd))$ be the predictable solutions of \rf{eta_1} and \rf{eta_2}, respectively, with undetermined $\g_1(\cd)\in L^2_{\dbF^M}(s,T;\dbR^n)^\perp$, and $\g_2(\cd)\in L^2_{\dbF^M}(s,T;\dbR^n)$ (see \rf{eta_1-z_1} and \rf{eta_2-z_2}).  For convenience, we may simply rewrite \rf{Y=} as
	\bel{Y=*}Y=P_1X_1+P_2X_2+\eta_1+\eta_2,\qq t\in[s,T],\ee
	or equivalently,
	\bel{Y_1=*}Y_1=P_1X_1+\eta_1,\qq t\in[s,T],\ee
	\bel{Y_2=*}Y_2=P_2X_2+\eta_2,\qq t\in[s,T],\ee
	Consequently, by \rf{FBSDE2} and \rf{Y=},
	\bel{dY_1}\ba{ll}
	\ns\ds-(A_1^\top Y_1+C_1^\top
	Z_1+Q_1X_1+S_1^\top u_1+q_1)dt+ZdW+Z^M_1dM\\
	\ns\ds=dY_1=\big[\G_1X_1+\g_1+P_1(A_1X_1+B_1u_1+b_1)\big]dt\\
	\ns\ds\qq+\big[P_1(C_1X_1+C_2X_2+D_1u_1+D_2u_2+\si)+\z_1\big]dW
	+(\L_1^MX_1+\z_1^M)dM\\
	\ns\ds=\big[(\G_1+P_1A_1)X_1+\g_1+P_1B_1u_1+P_1b_1\big]dt\\
	\ns\ds\qq+(P_1C_1X_1+P_1C_2X_2+P_1D_1u_1+P_1D_2u_2+P_1\si+\z_1)dW
	+(\L_1^MX_1+\z_1^M)dM.\ea\ee
	Then,
	$$\ba{ll}
	\ds Z=P_1C_1X_1+P_1C_2X_2+P_1D_1u_1+P_1D_2u_2+P_1\si+\z_1,\\
	\ns\ds Z^M_1=\L_1^MX_1+\z_1^M.\ea$$
	This implies
	$$Z_i=P_1C_iX_i+P_1D_iu_i+P_1\si_i+\Pi_i[\z_1],\qq i=1,2.$$
	Next,
	\bel{dY_i2}\ba{ll}
	\ns\ds-(A_2^\top Y_2+C_2^\top Z_2+Q_2X_2+S_2^\top u_2+q_2)dt+Z^M_2dM\\
	\ns\ds=dY_2=\big[\G_2X_2+\g_2+P_2(A_2X_2+B_2u_2+b_2)\big]dt+(\L_2^MX_2+\z_2^M)dM.
	\ea\ee
	Hence,
	$$Z^M_2=\L^M_2X_2+\z^M_2.$$
	The stationarity conditions read
	$$\ba{ll}
	\ds0=B_i^\top Y_i+D_i^\top Z_i+S_iX_i+R_iu_i+r_i\\
	\ns\ds\q=B_i^\top(P_iX_i+\eta_i)+D_i^\top(P_1C_iX_i+P_1D_iu_i+P_1\si_i+\Pi_i[\z_1])
	+S_iX_i+R_iu_i+r_i\\
	\ns\ds\q=(B_i^\top P_i+D_i^\top P_1C_i+S_i)X_i+(R_i+D_i^\top P_1D_i)u_i
	+B_i^\top\eta_i+D_i^\top\Pi_i[\z_1]+D_i^\top P_1\si_i+r_i\\
	\ns\ds\q\equiv\cS_i(P_i,P_1)X_i+\cR_i(P_1)u_i+B_i^\top\eta_i+D_i^\top\Pi_i[\z_1]+D_i^\top P_1\si_i+r_i,\ea$$
	where see \rf{cQ-cR-cS} for the definition of $\cR_i(P_1)$ and $\cS_i(P_i,P_1)$.
	Now, by assuming the inclusion condition (comparable with \rf{cR>0}):
	\bel{inclusion1}\ba{ll}
	\ns\ds\sR\big(\cS_i(P_i,P_1)\big)\subseteq\sR\big(\cR_i(P_1)\big),\\
	\ns\ds B_i^\top\eta_i+D_i^\top\Pi_i[\z_1]+D_i^\top P_1\si_i+r_i\in\sR\big(\cR_i(P_1)\big),\ea\qq i=1,2,\ee
	which is true, in particular, if $\cR_i(P_1)$ is invertible, then
	\bel{u_1}\ba{ll}
	\ns\ds u_i\1n=\1n-\cR_i(P_1)^\dag\big[\cS_i(P_i,P_1)X_i\1n+\1n B_i^\top\eta_i\1n+\1n D_i^\top\Pi_i[\z_1]\1n
	+\1n D_i^\top P_1\si_i\1n+\1n r_i\big]\1n+\1n\big[I\1n-\1n\cR_i(P_1)^\dag\cR_i(P_1)\big]\m_i,\q i=1,2,\ea\ee
	which is the same as \rf{bar u}.
	Next, by comparing the drift terms in \rf{dY_1}, we have
	$$\ba{ll}
	\ns\ds0=A_i^\top(P_iX_i+\eta_i)+C_i^\top(P_1C_iX_i+P_1D_iu_i+P_1\si_i+\Pi_i[\z_1])\\
	\ns\ds\qq+Q_iX_i+S_i^\top u_i+q_i+(\G_i+P_iA_i)X_i+P_iB_iu_i+P_ib_i+\g_i\\
	\ns\ds\q=(\G_i+P_iA_i+A_i^\top P_i+C_i^\top P_1C_i+Q_i)X_i
	+(P_iB_i+C_i^\top P_1D_i+S_i^\top)u_i\\
	\ns\ds\qq+\g_i+A_i^\top\eta_i+C_i^\top\Pi_i[\z_1]+P_ib_i+C_i^\top P_1\si_i+q_i\\
	\ns\ds\q=\big(\G_i+\cQ_i(P_i,P_1)\big)X_i
	+\cS_i(P_i,P_1)^\top u_i+\g_i+A_i^\top\eta_i+C_i^\top\Pi_i[\z_1]+P_ib_i+C_i^\top P_1\si_i+q_i\\
	\ns\ds\q=\big(\G_i+\cQ_i(P_i,P_1)\big)X_i-\cS_i(P_i,P_1)^\top\cR_i(P_1,P_1)^\dag\big[\cS_i(P_i,P_1)
	X_i\1n+\1n B_i^\top\eta_i\1n+\1n D_i^\top\Pi_i[\z_1]\1n+\1n D_i^\top P_1\si_i\1n+\1n r_i\big]\\
	\ns\ds\qq+\g_i+A_i^\top\eta_i+C_i^\top\Pi_i[\z_1]+P_ib_i+C_i^\top P_1\si_i+q_i\\
	\ns\ds\q=\big[\G_i\1n+\1n\cQ_i(P_i,P_1)\1n-\1n\cS_i(P_i,P_1)^\top\cR_i(P_1,P_1)^\dag\cS_i(P_i)
	\big]X_i\1n+\1n\g_i+A_i^\top\eta_i\1n+\1n C_i^\top\Pi_i[\z_1]\1n+\1n P_ib_i\1n+\1n C_i^\top P_1\si_i\1n+\1n q_i\\
	\ns\ds\qq-\cS_i(P_i,P_1)^\top\cR_i(P_1)^\dag(B_i^\top\eta_i+D_i^\top\Pi_i[\z_1]
	+D_i^\top P_1\si_i+r_i),\ea$$
	where $\cQ_i(P_i,P_1)$ is defined by \rf{cQ-cR-cS}. Then, naturally, one should take
	\bel{G_ia}\G_i=-\(\cQ_i(P_i,P_1)-\cS_i(P_i,P_1)^\top\cR_i(P_1)^\dag\cS_i(P_i,P_1)\),\qq i=1,2,\ee
	and
	\bel{g_1}\ba{ll}
	\ns\ds\g_i=-\(A_i^\top\eta_i+C_i^\top\Pi_i[\z_1]+P_ib_i+C_i^\top P_1\si_i+q_i\\
	\ns\ds\qq\qq-\cS_i(P_i.P_1)^\top\cR_i(P_1)^\dag(B_i^\top\eta_i+D_i^\top\Pi_i[\z_1]
	+D_i^\top P_1\si_i+r_i)\),\qq i=1,2.\ea\ee
	Finally, \rf{G} is implied by the following:
	$$Y_i(T)=G_i,\qq\eta_i(T)=g_i,\qq i=1,2.$$
	Combining the above, we obtain the following result.
	
	\bp{closed-loop-ref} \sl Suppose Problem {\rm(MF-LQ)} admits an open-loop optimal
	control. Let the BSDREs \rf{Riccati1} admit predictable solutions $(P_i(\cd),\L_i^M(\cd))$. Let $(\eta_1(\cd),\z_1(\cd),\z_1^M(\cd),\eta_2(\cd),\z_2^M(\cd))$ be the predictable solution of \rf{BSDE-eta_1}. Then, the open-loop optimal control admits the closed-loop representation \rf{bar u}.
	
	\ep
	
	Clearly, \rf{bar u} is a (current) state-feedback representation, which is non-anticipating, and in principle, is practically realizable. Sometimes, we also call \rf{bar u} a state-feedback control.

	\ms
	
	Note that $(\eta_1(\cd),\z_1(\cd),\z_1^M(\cd),\eta_2(\cd),\z_2^M(\cd))$ can be obtained off-line, which are used to handle
	the nonhomogeneous terms in the state equation and the linear weighting terms in the cost functional. For the homogeneous case (i.e., $(b(\cd),\si(\cd),q(\cd),\bar q(\cd),r(\cd),\bar r(\cd),g,\bar g)=0$), $(\eta_1(\cd),\z_1(\cd),\z_1^M(\cd),\eta_2(\cd),\z_2^M(\cd))=0$. 	
	For such a case, \rf{bar u} will have a much simpler form.

	\ms
	
	In this section, by decoupling the optimality system, an FBSDE, we have formally derived the BSDREs. The method is clearly different from that in Section 3.

	\section{Closed-Loop Solvability}

	From the previous section, we see that under proper conditions, open-loop optimal
	control admits a closed-loop representation (or, of state-feedback form), which is non-anticipating. However, it is not clear if such a state-feedback control is optimal within the class of the state feedback controls. This suggests us introduce the so-called {\it closed-loop
		solvability} of the LQ problem. In this section, we are going to make this precise.
	
	\ms
	
	For any $0\les s<T<\i$, recall
	$$ L_{\dbF^M}^\i\1n(s,\1n T;\dbR^{m\times n})\1n=\1n\Big\{\Th\1n:\1n[s,T]\1n\times\1n\O\1n\to\1n\dbR^{m\times n}\bigm|\Th(\cd)\hb{ is $\dbF^M$-progressively measurable, }\esssup_{t\in[s,T]}\|\Th(t,\cd)\|_\i <\infty\Big\},$$
	and set
	$$\hTh[s,T]=L^\i_{\dbF^M}(s,T;\dbR^{m\times n})\times L^\i_{\dbF^M}(s,T;\dbR^{m\times n})\equiv L^\i_{\dbF^M}(s,T;\dbR^{m\times n})^2.$$
	Any element in $\hTh[s,T]$ is denoted by $\BTh(\cd)\equiv(\Th_1(\cd),\Th_2(\cd))$, and any $(\BTh(\cd),v(\cd))\in\hTh[s,T]\times\sU[s,T]$ is called a {\it closed-loop strategy}. Note that both $\Th_1(\cd)$ and $\Th_2(\cd)$ are $\dbF^M$-progressively measurable, whereas $v(\cd)=v_1(\cd)\oplus v_2(\cd)$ is in $\sU[s,T]=L^2_{\dbF^M}(s,T;\dbR^m)^\perp\oplus L^2_{\dbF^M}(s,T;\dbR^m)$. Namely, $v_2(\cd)$ is $\dbF^M$-progressively measurable, and $v_1(\cd)$ is merely $\dbF$-progressively measurable (not $\dbF^M$-measurable, unless it is zero).
	
	\ms
	
	For any closed-loop strategy $(\BTh(\cd),v(\cd))\equiv(\Th_1(\cd),\Th_2(\cd),v_1(\cd),
	v_2(\cd))\in\hTh[s,T]\times\sU[s,T]$, and any initial pair $(s,\xi)\in\sD$, let
	\bel{outcome1}\ba{ll}
	\ns\ds u(t)=[\Th_1(t)X_1(t)+v_1(t)]\oplus[\Th_2(t)X_2(t)+v_2(t)]\\
	\ns\ds\qq\equiv(\Th_1(t),\Th_2(t))\begin{pmatrix}X_1(t)\\ X_2(t)\end{pmatrix}+v(t)
	\equiv\BTh(t)X(t)+v(t),\qq t\in[s,T],\ea\ee
	which is called an {\it outcome} of the closed-loop strategy $(\BTh(\cd),v(\cd))$
	corresponding to the initial pair $(s,\xi)$, where $X(\cd)$ is the solution of the
	following {\it closed-loop system}: (see Appendix B)
	\bel{closed1}dX=\(A_1^{\Th_1}X_1\1n+\1n A_2^{\Th_2}X_2\1n+\1n
	B_1v_1+B_2v_2+b\)dt\1n+\1n\(C_1^{\Th_1}X_1+C_2^{\Th_2}X_2+D_1v_1+D_2v_2+\si
	\)dW(t),\ee
	where the initial state $X(s)=\xi$, and (see \rf{A_1})
	\bel{A^Th}A_i^{\Th_i}(\cd)=A_i(\cd)+B_i(\cd)\Th_i(\cd),\q C_i^{\Th_i}(\cd)=C_i(\cd)+D_i(\cd)\Th_i(\cd),\q i=1,2.\ee
	Or, equivalently
	\bel{closed1*}\left\{\2n\ba{ll}
	\ds dX_1=\(A_1^{\Th_1}X_1\1n+\1n
	B_1v_1+b_2\)dt\1n+\1n\(C_1^{\Th_1}X_1+C_2^{\Th_2}X_2+D_1v_1+D_2v_2+\si
	\)dW(t),\q t\in[s,T],\\
	\ns\ds dX_2=\(A_2^{\Th_2}X_2\1n+B_2v_2+b_2\)dt.\ea\right.\ee
	Note that all the coefficients are $\dbF^M$-adapted. The control $u(\cd)$ defined by \rf{outcome1} is also called a {\it state-feedback control} (which depends on the initial pair $(s,\xi)$ through the state process). We further note that in \rf{outcome1}, $\Th_1(\cd)$ and $\Th_2(\cd)$ could be different. If they were the same, say, equal to some $\Th_0(\cd)$, then
	\bel{outcome2}u(t)=\Th_0(t)X_1(t)+\Th_0(t)X_2(t)+v(t)=\Th_0(t)X(t)+v(t).\ee
	We should distinguish the meaning of $\Th_0(t)X(t)$ on the right-hand of the above and that of $\BTh(t)X(t)$ on the right-hand side of \rf{outcome1}. Clearly, the class of feedback controls of form \rf{outcome1} is much larger than that of the form \rf{outcome2}. Such an idea is borrowed from Yong \cite{Yong-2013}.
	
	\ms
	
	Correspondingly, (see Appendix B)
	\bel{cost2b}\ba{ll}
	\ns\ds J(s,\xi;\BTh(\cd),v(\cd)):=J(s,\xi;\Th_1(\cd)X_1(\cd)+\Th_2(\cd)X_2(\cd)+v(\cd))\\
	\ns\ds=\frac12\dbE\Big\{\int_s^T\[\lan Q_1^{\Th_1}X_1,X_1\ran+2\lan S_1^{\Th_1} X_1,v_1\ran+\lan R_1v_1,v_1\ran+2\lan q_1^{\Th_1},X_1\ran+2\lan r_1,v_1\ran\\
	\ns\ds\qq\qq\qq+\lan Q_2^{\Th_2}X_2,X_2\ran+2\lan S_2^{\Th_2}X_2,v_2\ran+\lan R_2v_2,v_2\ran+2\lan q_2^{\Th_2},X_2\ran+2\lan r_2,v_2\ran\]dt\\
	\ns\ds\qq\qq\qq+\lan G_1X_1(T),X_1(T)\ran+\!2\lan g_1,X_1(T)\ran+\lan G_2X_2(T),X_2(T)\ran+2\lan g_2,X_2(T)\ran\Big\},\ea\ee
	where (see \rf{Q_1})
	\bel{notation-QRS}\ba{ll}
	\ns\ds Q_i^{\Th_i}(t)=Q_i(t)+\Th_i(t)^\top S_i(t)+S_i(t)^\top\Th_i(t)+\Th_i(t)^\top R_i(t)\Th_i(t),\\
	\ns\ds S_i^{\Th_i}(t)=S_i(t)+R_i(t)\Th_i(t),\qq q_i^{\Th_i}(t)=q_i(t)+\Th_i(t)^\top r_i(t),\qq i=1,2.\ea\ee
	In particular, for the homogenous case, one has
	\bel{J^0(Th)}\ba{ll}
	\ns\ds J^0(s,\xi;\BTh(\cd),v(\cd))=\frac12\dbE\Big\{\int_s^T\[\lan Q_1^{\Th_1}X_1,X_1\ran+2\lan S_1^{\Th_1} X_1,v_1\ran+\lan R_1v_1,v_1\ran\\
	\ns\ds\qq\qq\qq\qq\qq\qq+\lan Q_2^{\Th_2}X_2,X_2\ran+2\lan S_2^{\Th_2}X_2,v_2\ran+\lan R_2v_2,v_2\ran\]dt\\
	\ns\ds\qq\qq\qq\qq\qq\qq+\lan G_1X_1(T),X_1(T)\ran+\lan G_2X_2(T),X_2(T)\ran\Big\},\ea\ee
	Note that by taking $\Th_1(\cd)=\Th_2(\cd)=0$, we recover the original state equation and cost functional (with $u_i(\cd)=v_i(\cd)$.
	
	\ms
	
	We now introduce the following definition.
	
	\bde{optimal control} \rm Problem (MF-LQ) is said to be ({\it uniquely})
	{\it closed-loop solvable} at $s\in[0,T)$, if there exists a (unique) pair $(\bar\BTh,
	\bar v(\cd))\in\hTh[s,T]\times\sU[s,T]$ such that for any $\xi\in L^2_{\cF_s}(\O;
	\dbR^n)$,
	\bel{bar J1}J(s,\xi;\bar\BTh(\cd),\bar v(\cd))\les J(s,\xi;\BTh(\cd),v(\cd)),\qq\forall(\BTh(\cd),v(\cd))\in\hTh[s,T]\times\sU[s,T].\ee
	In this case, $(\bar\BTh(\cd),\bar v(\cd))$ is called a (the) {\it closed-loop optimal strategy}. The corresponding state process $\bar X(\cd)$ is called a (the) {\it closed-loop optimal state process}.
	\ede
	
	We emphasize that the closed-loop optimal strategy $(\bar\BTh(\cd),\bar v(\cd))$ (if it exists) is independent of the initial state $\xi\in L^2_{\cF_s}(\O;\dbR^n)$. This is the main feature of closed-loop optimal strategies which distinguishes it from that of the open-loop open controls (see Sun--Yong \cite{Sun-Yong-2020a}). We have the following simple result.

	\bp{6.2} \sl {\rm(i)} If $(\bar\BTh(\cd),\bar v(\cd))$ is a closed-loop optimal strategy of Problem {\rm(MF-LQ)}. Then
	\bel{bar J1b}J(s,\xi;\bar\BTh(\cd),\bar v(\cd))\les J(s,\xi;u(\cd)),\qq\forall u(\cd)\in\sU[s,T],\ee
	which means any outcome of $(\bar\BTh(\cd),\bar v(\cd))$ is an open-loop optimal control.
	
	\ms
	
	{\rm(ii)} If $(\bar\BTh(\cd),\bar v(\cd))\in\hTh[s,T]\times\sU[s,T]$ such that any outcome of it is an open-loop optimal control of $(s,\xi)\in\sD$, that is
	$$J(s,\xi;\bar \BTh(\cd),\bar v(\cd))=\inf_{u(\cd)\in\sU[s,T]}J(s,\xi;u(\cd)),$$
	then it must be a closed-loop optimal control.
	
	\ms
	
	{\rm(iii)} Let Problem {\rm(MF-LQ)} be closed-loop solvable at $s\in[0,T)$. Then for any $s'\in(s,T)$, Problem {\rm(MF-LQ)} is closed-loop solvable at $s'$.
	
	\ep

	\it Proof. \rm (i) For $(s,\xi)\in\sD$, by letting $\bar u(\cd)$ as \rf{outcome1}, we see that it is an open-loop optimal control, which means \rf{bar J1b}  or  \rf{bar J1} holds.
	
	\ms
	
	(ii) For any $(\BTh(\cd),v(\cd))\in\hTh[s,T]\times\sU[s,T]$, and $(s,\xi)\in\sD$, let $u(\cd)$ be the corresponding outcome. Then by our condition,
	$$J(s,\xi;\bar\BTh(\cd),\bar v(\cd))=\inf_{u(\cd)\in\sU[s,T]}J(s,\xi;u(\cd))\les J(s,\xi;u(\cd))\les J(s,\xi;\BTh(\cd),v(\cd)).$$
	Thus proves our conclusion.
	
	\ms
	
	(iii) Let Problem (MF-LQ) be closed-loop solvable at $s\in [0,T)$ and $s'\in (s,T)$. Then there exists a closed-loop optimal strategy $(\bar\BTh(\cd),\bar v(\cd))$. It is clear that the restriction $(\bar\BTh(\cd)\big|_{s',T]},\bar v(\cd)\big|_{[s',T]})$ of $(\bar\BTh(\cd),\bar v(\cd))$ on $[s',T)$ is a closed-loop optimal strategy of Problem (MF-LQ) at $s'$. \endpf
	
	\ms
	
	Let us recall the following BSDRE (see \rf{Riccati1})
	\bel{Riccati2}\left\{\2n\ba{ll}
	\ds dP_i(t)=-\big[\cQ_i(P_i,P_1)-\cS_i(P_2,P_1)^\top\cR_i(P_1)^\dag\cS_i(P_i,P_1) \big](t)dt+\L^M_i(t)dM(t),\qq t\in[s,T],\\
	\ns\ds P_i(T)=G_i,\ea\right.\ee
	and the following BSDEs (see \rf{BSDE-eta_1}):
	\bel{eta*}\left\{\2n\ba{ll}
	\ds d\eta_1(t)=-\big[A_1^\top\eta_1+C_1^\top\Pi_1[\z_1]+P_1b_1+C_1^\top P_1\si_1+q_1\\
	\ns\ns\qq\qq\q-\cS_1(P_1,P_1)^\top \cR_1(P_1)^\dag(B_1^\top\eta_1\1n+\1n D_1^\top\Pi_1[\z_1]\1n+\1n D_1^\top P_1\si_1\1n+\1n r_1)\big]dt+\z_1dW(t)+\z^M_1dM(t),\\
	\ns\ds d\eta_2(t)=-\big[A_2^\top\eta_2+C_2^\top\Pi_2[\z_1]+P_2b_2+C_2^\top P_1\si_2+q_2\\
	\ns\ds\qq\qq\q-\cS_2(P_2,P_1)^\top \cR_2(P_1)^\dag(B_2^\top\eta_2+D_2^\top\Pi_2[\z_1]+D_2^\top P_1\si_2+r_2)\big]dt+\z^M_2dM(t), \\
	\ns\ds\eta_1(T)=g_1,\qq\eta_2(T)=g_2.\ea\right.\ee
	From the previous sections, we see that under (H2.1), if the above equations have predictable solutions so that the following conditions are satisfied:
	\bel{R>0}\sR\big(\cS_i(P_i,P_1)\big)\subseteq\sR\big(\cR_i(P_1)\big),\qq\cR_i(P_1)\ges0,
	\ee
	and (recall \rf{Bq})
	\bel{B_1eta_1*}\left\{\2n\ba{ll}
	\ns\ds\Br_i\equiv B_i^\top\eta_i+D_i^\top\Pi_i[\z_1]+D_i^\top P_1\si_i+r_i\in\sR(\cR_i(P_1)),\\
	\ns\ds\cR_1(P_1)^\dag\Br_1\in L^2_{\dbF^M}(s,T;\dbR^m)^\perp,\qq
	\cR_2(P_1)^\dag\Br_2\in L^2_{\dbF^M}(s,T;\dbR^m),\ea\right.\ee
	then Problem (MF-LQ) has an open-loop optimal control which has a state-feedback representation:
	\bel{bar u*b}\bar u_i=-\cR_i(P_1)^\dag\cS_i(P_i,P_1)\bar X_i-\cR_i(P_1)^\dag\Br_i+[I-\cR_i(P_1)^\dag\cR_i(P_1)]\m_i,\qq i=1,2,\ee
	for some $\m_i$. Now, if we define
	\bel{cl-opt}\left\{\2n\ba{ll}
	\ns\ds\bar\Th_i=-\cR_i(P_1)^\dag\cS_i(P_i,P_1)+[I-\cR_i(P_1)^\dag\cR_i(P_1)]
	\Th_{0i}\in L^\infty_{\dbF^M}(s,T;\dbR^{m\times n}),\\
	\ns\ds\bar v_1=-\cR_1(P_1)^\dag\Br_1\1n+\1n[I\1n-\1n\cR_1(P_1)^\dag\cR_1(P_1)]\th_1\in L^2_{\dbF^M}(s,T;\dbR^{ m})^\perp,\\
	\ns\ds\bar v_2=-\cR_2(P_1)^\dag\Br_2+[I-\cR_2(P_1)^\dag\cR_2(P_1)]\th_2\in L^2_{\dbF^M}(s,T;\dbR^{m}),\ea\right.\ee
	for some $\Th_{0i}$ and $\th_i$, then
	$$\bar u_i=\bar\Th_i\bar X_i+\bar v_i,\qq i=1,2.$$
	This means that $\bar u(\cd)$ is the outcome of $(\bar\BTh(\cd),\bar v(\cd))$. Hence, by Proposition \ref{6.2}, (ii), we see that the closed-loop strategy defined by \rf{cl-opt} is a closed-loop optimal strategy (at $s$). The purpose of this section is to show that the conditions imposed above are also necessary for Problem (MF-LQ) to be closed-loop solvable. For convenience, we introduce the following notion.
	
	\bde{r-solution} \rm Two pairs $(P_i(\cd),\L^M_i(\cd))\in L^\i_{\dbF^M}(s,T;\dbS^n)\times L^2_{\dbF_-^M}(s,T;\dbS^n)$ ($i=1,2$) are called a {\it regular predictable solution} of BSDRE \rf{Riccati2} if (the drift) $\dot P_i\in L^\infty_{\dbF^M}(s,T;\dbS^n)$ and \rf{R>0} holds, and \rf{Riccati2} is satisfied in the usual sense. In this case, the BSDRE \rf{Riccati2} is sad to be {\it regularly solvable}. If, in addition, the following holds:
	\bel{cR>d}\cR_i(P_1)\ges\d I,\qq i=1,2,\ee
	for some $\d>0$, then these pairs are called a {\it strong regular predictable solution} of \rf{Riccati2}. In this case, the BSDRE \rf{Riccati2} is said to be {\it strongly regularly solvable}.
	
	\ede
	
	\ms
	
	We now present the main result of this section.

	\bt{mainthm1}\sl  Let {\rm (H2.1)} hold. Problem {\rm(MF-LQ)} is closed-loop solvable at $s\in[0,T)$ if and only if the following hold:
	
	\ss
	
	{\rm(i)} The BSDRE \eqref{Riccati2} admits a regular predictable solution $(P_i(\cd),\L_i^M(\cd))\in L^\i_{\dbF^M}(s,T;\dbS^n)\times L^2_{\dbF_-^M}(s,T;\dbS^n)$.

	{\rm(ii)} There exist processes $(\eta_1(\cd),\z_1(\cd),\z_1^M(\cd))\in L_{\dbF^M}^2(s,T;\dbR^n)^\perp\times L_\dbF^2(s,T;\dbR^n)\times L_{\dbF^M_-}^2(s,T;\dbR^n)^\perp$ and $(\eta_2(\cd),\z_2^M(\cd))\in L_{\dbF^M}^2(s,T;\dbR^n)\times L_{\dbF^M_-}^2(s,T;\dbR^n)$ having properties \rf{B_1eta_1*}, such that BSDEs \rf{eta*} is satisfied on $[s,T]$.
	
	\ms
	
	In this case, any closed-loop optimal strategy is given by \rf{cl-opt}. Moreover, \rf{V} holds.

	\et
	
	\it Proof. \rm Sufficiency follows from Theorem 3.1, so is the representation of the value function \rf{V}, with a little possible modification. Hence, we need only to prove the necessity. The proof is lengthy, and we split it into a couple of steps.
	
	\ms
	
	\it Step 1. \rm Solvability of BSDREs \rf{Riccati2}.
	
	\ms
	
	Let $(\bar\BTh(\cd),\bar v(\cd))$ be a closed-loop optimal strategy which satisfies, by definition,
	\bel{les}J(s,\xi,\bar u(\cd))=J(s,\xi;\bar\BTh(\cd),\bar v(\cd))\les J(s,\xi;\bar\BTh(\cd),v(\cd)),\qq\forall v(\cd)\in\sU[s,T],\ee
	where $\bar u(\cd)=\bar\Th_1(\cd)\bar X_1(\cd)+\bar\Th_2(\cd)\bar X_2(\cd)+\bar v(\cd)$, the outcome of the optimal strategy $(\bar\BTh(\cd),\bar v(\cd))$. Note that under state feedback control $u(\cd)=\bar\Th_1(\cd)X_1(\cd)+\bar\Th_2(\cd)X_2(\cd)+v(\cd)$, the state
	equation reads (taking $\Th_i(\cd)=\bar\Th_i(\cd)$ in \rf{closed1*})
	\bel{closed4}\left\{\2n\ba{ll}
	\ds dX_1=(A_1^{\bar\Th_1}X_1+B_1v_1+b_1)dt
	+(C_1^{\bar\Th_1}X_1+C_2^{\bar\Th_2}X_2+D_1v_1+D_2v_2+\si)dW(t),\\
	\ns\ds dX_2=(A_2^{\bar\Th_2}X_2+B_2v_2+b_2)dt,\\
	\ns\ds X_1(s)=\xi_1,\qq X_2(s)=\xi_2.\ea\right.\ee
	Correspondingly, similar to \rf{cost2}. we have
	\bel{bar J}\ba{ll}
	\ns\ds\bar J(s,\xi;v(\cd)):=J(s,\xi;\bar\BTh(\cd)X(\cd)+v(\cd))\\
	\ns\ds=\frac12\dbE\Big\{\int_s^T\[\lan Q_1^{\bar\Th_1}X_1,X_1\ran+2\lan S_1^{\bar\Th_1} X_1,v_1\ran+\lan R_1v_1,v_1\ran+2\lan q_1^{\bar\Th_1},X_1\ran+2\lan r_1,v_1\ran\\
	\ns\ds\qq\qq\qq+\lan Q_2^{\bar\Th_2}X_2,X_2\ran+2\lan S_2^{\bar\Th_2}X_2,v_2\ran+\lan R_2v_2,v_2\ran+2\lan q_2^{\bar\Th_2},X_2\ran+2\lan r_2,v_2\ran\]dt\\
	\ns\ds\qq\qq\qq+\lan G_1X_1(T),X_1(T)\ran+\!2\lan g_1,X_1(T)\ran+\lan G_2X_2(T),X_2(T)\ran+2\lan g_2,X_2(T)\ran\Big\},\ea\ee
	with $Q_1^{\bar\Th}$, and so on, defined by \rf{notation-QRS} (replacing $\Th$ by $\bar\Th$). The above means that $\bar v(\cd)$ is an open-loop optimal control of the problem with state equation \rf{closed4} and the cost functional \rf{bar J}, and the optimal state process is $\bar X(\cd)$. Hence, from Theorem \ref{LQ-open} and \rf{FBSDE2}, we have the following (suppressing $t$):
	\bel{FBSDE3}\left\{\2n\ba{ll}
	\ds d\bar X_1=(A_1^{\bar\Th_1}\bar X_1+B_1\bar v_1+b_1)dt+(C_1^{\bar\Th_1}\bar X_1+C_2^{\bar\Th_2}\bar X_2+D_1\bar v_1+D_2\bar v_2+\si)dW,\q t\in[s,T],\\
	\ns\ds d\bar X_2=(A_2^{\Th_2}\bar X_2+B_2\bar v_2+b_2)dt,\\
	\ns\ds d\bar Y_1=-\big[(A_1^{\bar\Th_1})^\top\bar Y_1+(C_1^{\bar\Th_1})^\top\bar Z_1+Q_1^{\bar\Th_1}\bar X_1+(S_1^{\bar\Th_1})^\top\bar v_1+q_1^{\bar\Th_1}\big]dt+\bar ZdW+\bar Z_1^MdM,\q t\1n\in\1n[s,T],\\
	\ns\ds d\bar Y_2=-\big[(A_2^{\bar\Th_2})^\top\bar Y_2+(C_2^{\bar\Th_2})^\top\bar Z_2+Q_2^{\bar\Th_2}\bar X_2+(S_2^{\bar\Th_2})^\top\bar v_2+q^{\bar\Th_2}_2\big]dt+\bar Z_2^MdM,\q t\1n\in\1n[s,T],\\
	\ns\ds\bar X_1(s)=\xi_1,\q \bar X_2(s)=\xi_2.\q\bar Y_1(T)=G_1\bar X_1(T)+g_1,\q\bar Y_2(T)=G_2\bar X_2(T)+g_2,\\
	\ns\ds B_i^\top\bar Y_i+D_i^\top\bar Z_i+S_i^{\bar\Th_i}\bar X_i+R_i\bar v_i+r_i=0,\q\ae t\in[s,T],~\as,\q i=1,2.\ea\right.\ee
	Note that with the same $(\bar\BTh(\cd),\bar v(\cd))$, the above admits a predictable solution $(\bar X,\bar Y,\bar Z,\bar Z^M)$ (which depends on the initial state $\xi$). Hence, if we let $(\bar X^0,\bar Y^0,\bar Z^0,(\bar Z^M)^0)$ be the predictale solution of the above FBSDE corresponding to $\xi=0$, then
	$$\h X=\bar X-\bar X^0,\q\h Y=\bar Y-\bar Y^0,\q\h Z=\bar Z-\bar Z^0,\q\h Z^M=\bar Z^M-(Z^M)^0$$
	satisfies
	\bel{FBSDE4}\left\{\2n\ba{ll}
	\ds d\h X_1=A_1^{\bar\Th_1}\h X_1dt+(C_1^{\bar\Th_1}\h X_1+C_2^{\bar\Th_2}\h X_2)dW,\q t\in[s,T],\\
	\ns\ds d\h X_2=A_2^{\Th_2}\h X_2dt,\qq t\in[s,T],\\
	\ns\ds d\h Y_1=-\big[(A_1^{\bar\Th_1})^\top\h Y_1+(C_1^{\bar\Th_1})^\top\h Z_1+Q_1^{\bar\Th_1}\h X_1\big]dt+\h ZdW+\h Z_1^MdM,\q t\1n\in\1n[s,T],\\
	\ns\ds d\h Y_2=-\big[(A_2^{\bar\Th_2})^\top\h Y_2+(C_2^{\bar\Th_2})^\top\h Z_2+Q_2^{\bar\Th_2}\h X_2\big]dt+\h Z_2^MdM,\q t\1n\in\1n[s,T],\\
	\ns\ds\h X_1(s)=\xi_1,\q \h X_2(s)=\xi_2,\q\h Y_1(T)=G_1\h X_1(T),\q\h Y_2(T)=G_2\bar X_2(T),\\
	\ns\ds B_i^\top\h Y_i+D_i^\top\h Z_i+S_i^{\bar\Th_i}\h X_i=0,\q\ae t\in[s,T],~\as\q i=1,2.\ea\right.\ee
	By Corollary \ref{homo}, we know that 0 control is open-loop optimal for the homogeneous Problem (MF-LQ)$^0$. We note that for given $\xi=\xi_1+\xi_2$, the above decoupled FBSDE for $(\h X(\cd),\h Y(\cd),\h Z(\cd),\h Z^M(\cd))$ admits a unique predictable solution, and the last stationarity conditions are satisfied (for the given $\bar\BTh(\cd)=(\bar\Th_1(\cd),\bar\Th_2(\cd))$). Next, we introduce the following linear BSDEs for $\dbS^n$-valued $\dbF^M$-predictable processes $(P_i(\cd),\L_i^M(\cd))$ ($i=1,2$):
	\bel{BSDE-Pi}\left\{\2n\ba{ll}
	\ds dP_i=-\big[P_iA_i^{\bar\Th_i}+(A_i^{\bar\Th_i})^\top P_i+(C_i^{\bar\Th_i})^\top P_1C_i^{\bar\Th_i}+Q_i^{\bar\Th_i}\big]dt+\L_i^MdM,\qq t\in[s.T],\\
	\ns\ds P_i(T)=G_i.\ea\right.\ee
	These two BSDEs admit unique predictable solutions with $P_i(\cd),\dot P_i(\cd)\in L^\infty_{\dbF^M}(0,T;\dbS^n)$. Now, we define $\wt Y=\wt Y_1+\wt Y_2$ by the following:
	$$\wt Y_i=P_i\h X_i,
	\qq i=1,2.$$
	By It\^o's formula, we have
	$$\ba{ll}
	\ns\ds d\wt Y_1=d(P_1\h X_1)=\[-\(P_1A_1^{\bar\Th_1}+(A^{\bar\Th_1}_1)^\top P_1+(C_1^{\bar\Th_1})^\top P_1C_1^{\bar\Th_1}+Q_1^{\bar\Th_1}\)\h X_1+ P_1A_1^{\bar\Th_1}\h X_1)\]dt\\
	\ns\ds\qq\qq\qq\qq\qq+P_1(C_1^{\bar\Th_1}\h X_1+C_2^{\bar\Th_2}\h X_2)dW+\L_1^M\h X_1dM\\
	\ns\ds\qq=-\((A^{\bar\Th_1}_1)^\top\wt Y_1+(C_1^{\bar\Th_1})^\top P_1C_1^{\bar\Th_1}\h X_1+Q_1^{\bar\Th_1}\h X_1\)dt+(P_1C_1^{\bar\Th_1}\h X_1+ P_1C_2^{\bar\Th_2}\h X_2)dW+\L_1^M\h X_1dM.\ea$$
	Thus, if we let
	$$\wt Z=P_1C^{\bar\Th_1}_1\h X_1+P_1C_2^{\bar\Th_2}\h X_2,\qq\wt Z^M_1=\L^M_1\h X_1.$$
	Then by orthogonal decomposition,
	$$\wt Z_1=P_1C_1^{\bar\Th_1}\h X_1,\qq\wt Z_2=P_1C_2^{\bar\Th_2}\h X_2,$$
	and
	$$d\wt Y_1=-\((A^{\bar\Th_1}_1)^\top\wt Y_1+(C_1^{\bar\Th_1})^\top\wt Z_1+Q_1^{\bar\Th_1}\h X_1\)dt+\wt ZdW+\wt Z^M_1dM.$$
	Likewise,
	$$\ba{ll}
	\ns\ds d\wt Y_2=d(P_2\h X_2)=\[-\(P_2A_2^{\bar\Th_2}+(A_2^{\bar\Th_2})^\top P_2+(C_2^{\bar\Th_2})^\top  P_1C_2^{\bar\Th_2}+Q_2^{\bar\Th_2}\)\h X_2+ P_2A_2^{\bar\Th_2}\h X_2\]dt+\L^M_2\h X_2dM\\
	\ns\ds\qq=-\((A_2^{\bar\Th_2})^\top\wt Y_2+(C_2^{\bar\Th_2})^\top\wt Z_2+Q_2^{\bar\Th_2}\h X_2\)dt+\wt Z_2^MdM,\ea$$
	with
	$$\wt Z_2^M=\L_2^M\h X_2.$$
	Thus, $(\wt Y(\cd),\wt Z(\cd),\wt Z^M(\cd))$ is a predictable solution of the BSDE in \rf{FBSDE4} for $(\h Y(\cd),\h Z(\cd),\h Z^M(\cd))$. By the uniqueness, we obtain
	$$\h Y_i=\wt Y_i=P_i\h X_i,\qq\h Z_i=\wt Z_i=P_1C_i^{\bar\Th_i}\h X_i,\qq
	\h Z_i^M=\wt Z_i^M=\L^M_i\h X_i.$$
	Next, from the stationarity condition in \rf{FBSDE4}, one has
	$$\ba{ll}
	\ns\ds0=B_i^\top\h Y_i+D_i^\top\h Z_i+S_i^{\bar\Th_i}\h X_i=(B_i^\top P_i+D_i^\top P_1C_i^{\bar\Th_i}+S_i^{\bar\Th_i})\h X_i\\
	\ns\ds\q=\big[B_i^\top P_i+D_i^\top P_1C_i+S_i+(R_i+D_i^\top P_1D_i)\bar\Th_i\big]\h X_i=\big[\cS_i(P_i,P_1)+\cR_i(P_1)\bar\Th_i\big]\h X_i,\\
	\ns\ds\qq\qq\qq\qq\qq\qq\qq\qq\qq\qq\qq\ae t\in[s,T],~\as\ea$$
	Since the above holds for all $(s,\xi)\in\sD$, by making use of Proposition \ref{6.2} (iii), we have
	\bel{S+RTh=0}\cS_i\big(P_i(t),P_1(t)\big)+\cR_i\big(P_1(t)\big)\bar\Th_i(t)=0,\qq\ae t\in[s,T],~\as\ee
	Consequently, we obtain the range inclusion conditions:
	$$\sR\(\cS_i\big(P_i(t),P_1(t)\big)\)\subseteq\sR\(\cR_i\big(P_1(t)\big)\),$$
	and
	$$\bar\Th_i(t)=-\cR_i\big(P_1(t)\big)^\dag\cS_i\big( P_i(t),P_1(t)\big)+\big[I-\cR_i\big(P_1(t)\big)^\dag\cR_i\big(P_1(t)\big)\big]\Th_{i0}(t),$$
	for some $\Th_{i0}(\cd)$. This results in (see Appendix B)
	$$P_iA_i^{\bar\Th_i}+(A_i^{\bar\Th_i})^\top P_i+(C_i^{\bar\Th_i})^\top P_1C_i^{\bar\Th_i}+Q_i^{\bar\Th_i}=\cQ_i(P_i,P_1)-\cS_i(P_i,P_1)\cR_i(P_1)^\dag\cS_i(P_i,P_1),$$
	and
	\bel{cost1*}\ba{ll}
	\ns\ds J^0(s,\bar\BTh(\cd),v(\cd))=\dbE\Big\{\lan P_1(s)\xi_1,\xi_1\ran+\lan P_2(s)\xi_2,\xi_2\ran+\int_s^T\[\lan(\cR_1(P_1)v_1,v_1\ran+\lan(\cR_2(P_1)v_2,v_2\ran\]dt\Big\}.\ea\ee
	Now, by the optimality of control $0$ for the homogeneous Problem (MF-LQ)$^0$, we must have the nonnegativity of $\cR_i(P)$ i.e., \rf{cR>0} holds. Then $(P_i(\cd),\L_i^M(\cd))$ is a regular predictable solution of  Riccati differential equations \rf{Riccati2}.
	
	\ms
	
	\it Step 2. \rm Solvability of BSDEs \rf{eta*}.
	
	\ms
	
	Set
	$$\eta_i(t)=\bar Y_i(t)-P_i(t)\bar X_i(t),\qq t\in[s,T],\q i=1,2.$$
	By It\^o's formula, we have
	$$\ba{ll}
	\ns\ds d\eta_1(t)=-\big[(A_1^{\bar\Th_1})^\top\bar Y_1+(C_1^{\bar\Th_1})^\top\bar Z_1+Q_1^{\bar\Th_1}\bar X_1+(S_1^{\bar\Th_1})^\top\bar v_1+q_1^{\bar\Th_1}\big]dt+\bar ZdW+\bar Z_1^MdM\\
	\ns\ds\qq\qq+\(P_1A_1^{\bar\Th_1}+(A^{\bar\Th_1}_1)^\top  P_1+(C_1^{\bar\Th_1})^\top
	P_1C_1^{\bar\Th_1}+Q_1^{\bar\Th_1}\)\bar X_1dt-\L_1^M \bar X_1dM\\
	\ns\ds\qq\qq-P_1(A_1^{\bar\Th_1}\bar X_1+B_1\bar v_1+b_1)dt-P_1(C_1^{\bar\Th_1}\bar X_1+C_2^{\bar\Th_2}\bar X_2+D_1\bar v_1+D_2\bar v_2+\si)dW\\
	\ns\ds=-\((A_1^{\bar \Th_1})^\top(\bar Y_1-P_1\bar X_1)+(C_1^{\bar\Th_1})^\top(\bar Z_1-P_1C_1^{\bar\Th_1}\bar X_1)+\big((S_1^{\bar\Th_1})^\top+P_1B_1\big)\bar v_1+q_1^{\bar\Th_1}+P_1b_1\)dt\\
	\ns\ds\qq\qq+\(\bar Z-P_1(C_1^{\bar\Th_1}\bar X_1+C_2^{\bar\Th_2}\bar X_2+D_1\bar v_1+D_2\bar v_2+\si)\)dW+\(\bar Z_1^M-\L_1^MX_1\)dM\\
	\ns\ds=-\((A_1^{\bar\Th_1})^\top\eta_1+(C_1^{\bar\Th_1})^\top[\Pi_1[\z_1]
	+P_1D_1\bar v_1+P_1\sigma_1]+\big((S_1^{\bar\Th_1})^\top+P_1B_1\big)\bar v_1+q_1^{\bar\Th_1}+P_1b_1\)dt\\
	\ns\ds\qq\qq+\z_1dW+\z_1^MdM\\
	\ns\ds=-\((A_1^{\bar\Th_1})^\top\eta_1+(C_1^{\bar\Th_1})^\top \Pi_1[\z_1]+[P_1B_1+(C_1^{\bar\Th_1})^\top P_1 D_1+(S_1^{\bar\Th_1})^\top]\bar v_1+q_1^{\bar\Th_1}+P_1b_1+(C_1^{\bar\Th_1})^\top P_1\sigma_1\)dt\\
	\ns\ds\qq\qq+\z_1dW+\z_1^MdM\\
	\ns\ds=-\(A_1^\top\eta_1+C_1^\top\Pi_1[\z_1]+(P_1B_1+C_1^\top P_1D_1+S_1^\top)\bar v_1+q_1+P_1b_1+C_1^\top P_1\sigma_1\)dt\\
	\ns\ds\q-\bar\Th_1^\top( B_1^\top\eta_1+D_1^\top\Pi_1[\z_1]+D_1^\top P_1D_1\bar v_1+R_1\bar v_1+r_1+D_1^\top P_1\si_1)dt+\z_1dW+\z_1^MdM,\ea$$
	where
	\bel{z_1}\z_1=\bar Z-P_1(C_1^{\bar\Th_1}\bar X_1+C_2^{\bar\Th_2}\bar X_2+D_1\bar v_1+D_2\bar v_2+\si),\q\z_1^M=\bar Z_1^M-\L_1^MX_1.\ee
	Likewise,
	$$\ba{ll}
	\ds d\eta_2(t)=-\big[(A_2^{\bar\Th_2})^\top\bar Y_2+(C_2^{\bar\Th_2})^\top\bar Z_2+Q_2^{\bar\Th_2}\bar X_2+(S_2^{\bar\Th_2})^\top\bar v_2+q^{\bar\Th_2}_2\big]dt+\bar Z_2^MdM\\
	\ns\ds\q+\big[(A_2^{\bar\Th_2})^\top P_2+ P_2A_2^{\bar\Th_2}+(C_2^{\bar\Th_2})^\top  P_1C_2^{\bar\Th_2}+Q_2^{\bar\Th_2}\big]\bar X_2dt+ \L^M_2 \bar X_2dM-P_2(A_2^{\bar\Th_2}\bar X_2+B_2\bar v_2+b_2)dt\\
	\ns\ds=-\((A_2^{\bar\Th_2})^\top(\bar Y_2-P_2\bar X_2)+(C_2^{\bar\Th_2})^\top(\bar Z_2-P_1C_2^{\bar\Th_2}\bar X_2)+[(S_2^{\bar\Th_2})^\top+P_2B_2]\bar v_2+P_2b_2+q^{\bar\Th_2}_2\)dt\\
	\ns\ds\qq\qq\qq+(\bar Z_2^M-\L_2^M\bar X_2)dM\\
	\ns\ds=-\((A_2^{\bar\Th_2})^\top\eta_2+(C_2^{\bar\Th_2})^\top(\Pi_2[\z_1]+P_1D_2\bar v_2+P_1\si_2)+[(S_2^{\bar\Th_2})^\top+P_2B_2]\bar v_2+P_2b_2+q^{\bar\Th_2}_2\)dt+\z_2^MdM\\
	\ns\ds=-\[A_2^\top\eta_2+C_2^\top(\Pi_2[\z_1]+P_1D_2\bar v_2+P_1\si_2)+(P_2B_2+S_2^\top)\bar v_2+P_2b_2+q_2\\
	\ns\ds\qq\qq\bar\Th_2^\top\(B_2^\top\eta_2+D_2^\top(\Pi_2[\z_1]+D_2^\top P_1D_2\bar v_2+D_2^\top P_1\si_2)+R_2\bar v_2+r_2\)\]dt+\z_2^MdM\\
	\ns\ds=-\[A_2^\top\eta_2+C_2^\top\Pi_2[\z_1]+(P_2B_2+C_2^\top P_1D_2+S_2^\top)\bar v_2+C_2^\top P_1\si_2+P_2b_2+q_2\\
	\ns\ds\qq\qq\bar\Th_2^\top\(B_2^\top\eta_2+D_2^\top\Pi_2[\z_1]+(R_2+D_2^\top P_1D_2)\bar v_2+D_2^\top P_1\si_2+r_2\)\]dt+\z_2^MdM.\ea$$
	Note that
	\bel{Beta=0}\ba{ll}
	\ds B_i^\top\eta_i+D_i^\top\Pi_i[\z_1]+(R_i+D_i^\top P_1D_i)\bar v_i+D_i^\top P_1\si_i+r_i\\
	\ns\ds=B_i^\top (\bar Y_i-P_i\bar X_i)+D_i^\top[\bar Z_i-P_1C_i^{\bar\Th_i}\bar X_i-P_1D_i\bar v_i-P_1\si_i]+(R_i+D_i^\top P_1D_i)\bar v_i+D_i^\top P_1\si_i+r_i\\
	%
	%
	%
	\ns\ds=B_i^\top\bar Y_i+D_i^\top\bar Z_i+S_i^{\bar\Th_i}\bar X_i+R_i\bar v_i+r_i-\big[\cS_i(P_i)^\top+\cR_i(P_1)\bar\Th_i\big]\bar X_i=0.\ea\ee
	The first term is $0$ because of the stationary condition (see \rf{FBSDE3}, line 5) and the second term is $0$ because of \rf{S+RTh=0}. Hence, the above calculation yields that
	$$d\eta_1=-\(A_1^\top\eta_1+C_1^\top\Pi_1[\z_1]+P_1b_1+C_1^\top P_1\si_1+q_1+\cS_1(P_1)^\top\bar v_1\)dt+\z_1dW+\z_1^MdM,$$
	and
	$$d\eta_2(t)=-\(A_2^\top\eta_2+C_2^\top\Pi_2[\z_1]+P_2b_2+C^\top_2P_1\si_2+q_2
	+\cS_2(P_2)^\top\bar v_2\)dt+\z_2^MdM.$$
	On the other hand,
	$$\ba{ll}
	\ns\ds0=B_i^\top\bar Y_i+D_i^\top\bar Z_i+S_i^{\bar\Th_i}\bar X_i+R_i\bar v_i+r_i\\
	\ns\ds\q=B_i^\top(\eta_i+P_i\bar X_i)+D_i^\top\(\Pi_i[\z_1]+P_1C_i^{\bar\Th_i}\bar X_i+D_i\bar v_i+\si_i)\)+S_i^{\bar\Th_i}\bar X_i+R_i\bar v_i+r_i\\
	\ns\ds\q=(B_i^\top P_i+D_i^\top P_1C_i^{\bar\Th_i}+S_i^{\bar\Th_i})\bar X_i+B_i^\top\eta_i+D_i^\top\Pi_i[\z_1]+D_i^\top P_1\si_i+r_1+(R_i+D_i^\top P_1D_i)\bar v_1\\
	\ns\ds\q=B_i^\top\eta_i+D_i^\top\Pi_i[\z_1]+D_i^\top P_1\si_i+r_i+\cR_i(P_1)\bar v_i.\ea$$
	Thus the conclusion in \rf{B_1eta_1*} follows and the equation in \rf{eta*} hold.
	This proves this step. \endpf
	
	\ms
	
	We should point out that the similar problem with constant coefficients was studied in Li-Sun-Yong (\cite{Li-Sun-Yong-2016}). A different method was used in proving the necessity. That method seems to be difficult to extend here.
	
	\ms
	
	In above theorem, we characterized the existence of an optimal closed-loop strategy in terms of the regular solvability of the BSDREs. The natural question is to ask when those BSDREs admit regular solutions? For the open-loop optimal control case,  (H4.1) is adopted for the existence (and uniqueness) of the open-loop optimal control. We expect that such an assumption (together with (H2.1)) also ensures the existence of a closed-loop optimal strategy. This will be established in the next section.
	
	\section{Strongly Regular Solvability of BSDREs}
	
	We begin this section with the following lemma.
	
	\bl{6.5} \sl Let {\rm (H2.1)} hold. Then for any $(\Th_1(\cd),\Th_2(\cd))\in\hTh[s,T]$, there exist unique predictable solutions $(P_i,\L^M_i)\in L^\i_{\dbF^M}(s,T;\dbS^n)\times L^2_{\dbF_-^M}(s,T;\dbS^n)$ to the following BSDEs
	\bel{BSDE-P_i}\left\{\2n\ba{ll}
	\ds dP_i(t)=-\(P_iA_i^{\Th_i}+(A_i^{\Th_i})^\top P_i\1n+\1n(C_i^{\Th_i})^\top P_1C_i^{\Th_i}\1n+\1n Q_i^{\Th_i}\)dt+\L_i^M(t)dM(t),\\
	\ns\ds P_i(T)=G_i,\ea\right.\ee
	where $A_i^{\Th_i}$ and $C_i^{\Th_i}$ are defined by \rf{A^Th}. Moreover, for any $v_1(\cd)\oplus v_2(\cd)\in\sU[s,T]$, we have
	\bel{repJT0}\ba{ll}
	\ns\ds J^0(s,\xi_1,\xi_2;\Th_1,\Th_2,v_1,v_2 )\equiv J^0(s,\xi_1,\xi_2,\Th_1X_1+v_1,\Th_2X_2+v_2)\\
	\ns\ds= \sum_{i=1}^2\dbE\[\lan P_i(s)\xi_i,\xi_i\ran+\int_s^T\(\lan \cR_i(P_1)v_i,v_i\ran+2\lan\big(\cS_i(P_i,P_1)+\cR_i(P_1)\Th_i\big)X_i^0,v_i\ran\)dt\].\ea\ee
	Further, if, in addition, {\rm(H4.1)} holds, then
	\bel{R+DPD}\cR_i(P_1)\ges\d I,\qq P_i\ges-K_0,\qq\as~\ae t\in[s,T],\q i=1,2,\ee
	where $\d>0$ can be taken the same as that in \rf{uconvexity} and $K_0>0$ can be taken as that in \rf{|V^0|}.
	
	\el
	
	\it Proof. \rm Note that \eqref{BSDE-P_i} is a linear BSDE driven by a c\`adl\`ag martingale with bounded coefficients, under (H2.1) and for any $\BTh(\cd)\equiv(\Th_1(\cd),\Th_2(\cd))\in\hTh[s,T]$. Thus, the existence and uniqueness of the predictable solution can be seen from \cite{Carbone-2008}. Moreover, $P_1$ and $P_2$ are bounded almost surely and almost everywhere $t\in[s,T]$. Now, for the given $(\Th_1(\cd),\Th_2(\cd))\in\hTh[s,T]$ and any $(v_1(\cd),v_2(\cd))\in\sU[s,T]$,
	applying It\^o's formula  (see \eqref{itoRC-2} and \rf{cost1*}), we have
	\rf{repJT0}.
	
	\ms
	
	Further, let \eqref{uconvexity} hold additionally. Setting $(\xi_1,\xi_2)=(0,0)$, and denoting $(X_1^{0,0}(\cd),X_2^{0,0}(\cd))$ the state corresponding to the homogeneous state equation under $(\BTh(\cd),v(\cd))$, using \rf{repJT0},
	we get
	$$\ba{ll}
	\ns\ds\sum_{i=1}^2\d\dbE\int_s^T|\Th_i(t)X_i^{0,0}(t)+v_i(t)|^2dt\les J^0(s,0,0;\Th_1X_1^{0,0}+v_1,\Th_2X_2^{0,0}+v_2)\\
	\ns\ds=\sum_{i=1}^2\dbE\int_s^T\(\lan\cR_i(P_1) v_i,v_i\ran+2\lan(\cS_i(P_i,P_1)+  \cR_i(P_1) \Th_i) X_i^{0,0}(t),v_i\ran\)dt.\ea$$
	Hence,
	$$\ba{ll}
	\ns\ds\sum_{i=1}^2\dbE\int_s^T\(\lan(\cR_i(P_1)-\d I)v_i,v_i\ran
	+2\lan[\cS_i(P_i,P_1)+(\cR_i(P_i)-\d I)\Th_i] X_i^{0,0}(t),v_i\ran\)dt\\
	\ns\ds\ges\sum_{i=1}^2\d\dbE\int_s^T\(|\Th_i(t)X^{0,0}_i(t)+v_i(t)|^2+|v_i|^2
	-2\lan\Th_iX^{0,0}_i(t)+v_i(t),v_i(t)\ran\)dt\\
	\ns\ds=\sum_{i=1}^2\d\dbE\int_s^T|\Th_i(t)X_i^{0,0}(t)|^2dt\ges0.\ea$$
	Further, for $t\in[s,T]$, $h>0$ and any $\n_1,\n_2\in L^2_{\cF_t}(\O;\dbR^m)$, setting $v_1(r)=\n_1{\bf I}_{[t,t+h]}(r)$, $v_2(r)=\n_2{\bf I}_{[t,t+h]}(r)$, we know
	\bel{SDE-2}\left\{\2n\ba{ll}
	\ns\ds d\dbE_r^M[X_1^{0,0}(r) ]=\( A_1^{\Th_1}(r)  \dbE_r^M[X_1^{0,0}(r)]+B_1(r)\n_1{\bf I}_{[t,t+h]}(r)\)dr,\q r\in[s,T], \\
	\ns\ds d X_2^{0,0}(r)=\(A_2^{\Th_2}(r)X_2^{0,0}(r)+B_2(r)\n_2{\bf I}_{[t,t+h]}(r)\)dr,\q r\in[s,T],\\
	\ns\ds X_1^{0,0}(s)=0,\qq X_2^{0,0}(s)=0.\ea\right.\ee
	Then,
	$$\ba{ll}\dbE_r^M[X_1^{0,0}(r) ]=\left\{\2n\ba{ll}
	\ds0,\qq\qq\qq\qq\qq\qq\qq\q\ r\in[s,t),\\
	\ns\ds\F_1(r)\int_t^{r\land(t+h)}\Phi_1(\t)^{-1}B_1(\t)\n_1d\t,\q r\in[t,T],   \ea\right.\\
	\ns\ds X_2^{0,0}(r)=\left\{\2n\ba{ll}0,\qq\qq\qq\qq\qq\qq\qq\q\   r\in[s,t),\\ \ns\ds \Phi_2(r)\int_t^{r\land(t+h)}\Phi_2(\t)^{-1}B_2(\t)\n_2d\t,\q r\in[t,T],   \ea\right.\ea$$
	where
	$$d\F_i(r)=A_i^{\Th_i}(r)\F_i(r)dr,\q\F_1(s)=I.$$
	Consequently,
	$$\ba{ll}
	\ns\ds\sum_{i=1}^2\dbE\int_t^{t+h}\Big[\lan(\cR_i(P_1)-\d I)\n_i,\n_i\ran +2\lan[\cS_i(P_i,P_1)+(\cR_i(P_1)-\d I)\Th_i]X_i^{0,0}(r),\n_i\ran \Big]dr\\
	\ns\ds=\sum_{i=1}^2\int_t^{t+h}\dbE\Big[\lan(\cR_i(P_1)-\d I)\n_i,\n_i\ran
	+2\lan[\cS_i(P_i,P_1)+(\cR_i(P_1)-\d I)\Th_i]\dbE_r^M[X_i^{0,0}(r) ],\n_i\ran \Big]dr\\
	\ns\ds=\sum_{i=1}^2\2n\int_t^{t+h}\2n\dbE\Big[\lan(\cR_i(P_1)-\d I)\n_i,\n_i\ran+2\lan[\cS_i(P_i,P_1)+(\cR_i(P_1)-\d I)\Th_i]\Phi_i(r)\int_t^{r}\Phi_i(\t)^{-1}B_i\n_id\t,\n_i\ran \Big]dr
	\1n\ges\1n0.\ea$$
	Noting that
	$$\ba{ll}
	\ns\ds\Big|\dbE\int_t^{t+h}\[2\lan[\cS_i(P_i,P_1)+(\cR_i(P_1)-\d I)\Th_i]\Phi_i(r)\int_t^{r}\Phi_i(\t)^{-1}B_i\n_id\t,\n_i\ran \Big]dr\1n\Big|\les K\2n\int_t^{t+h}\3n
	\int_t^rd\t dr\les Kh^2,\ea$$
	for some constant $K>0$. Therefore, dividing the above by $h$ and letting $h\to0$, we get
	$$\dbE\Big[\lan(\cR_i(P_1)(t)-\d I)\n_i,\n_i\ran\Big]\ges 0 ,\q t\in[s,T],\qq i=1,2,$$
	which implies \eqref{R+DPD}. Moreover, by \rf{|V^0|}, for any $(s,\xi)\in\sD$, we have
	$$-K_0\dbE(|\xi_1|^2+|\xi_2|^2)\les V^0(s,\xi_1,\xi_2)\les J^0(s,\xi_1,\xi_2;0,0) =\sum_{i=1}^2\dbE\lan P_i(s)\xi_i,\xi_i\ran.$$
	Therefore, $P_i\ges-K_0$. \endpf
	
	\ms		
	
	With the above preparations, we now present the following crucial result.

	\bl{7.2}\sl Let {\rm (H2.1)} hold, Then {\rm(H4.1)} hold if and only if BSDRE \eqref{Riccati1} admits a strongly regular solution.
	\el

	\it Proof. \rm {\it Sufficiency.} Let $(P_i(\cd),\L^M_i(\cd))\in L^\i_{\dbF^M}(s,T;\dbS^n)\times L^2_{\dbF^M_-}(s,T;\dbS^n)$ be a strongly regular solution of \rf{Riccati1}. Thus, there exists a $\d>0$ such that
	$$\cR_i(P_1)\ges\d I,\qq\as,\ae t\in[s,T],\q i=1,2.$$
	Let
	$$(\Th_1,\Th_2)=(-\cR_1(P_1)^{-1}\cS_1(P_1,P_1),-\cR_2(P_1)^{-1}\cS_2(P_2,P_1))\in \hTh[s,T].$$
	For any $u(\cd)=u_1(\cd)+u_2(\cd)\in\sU[s,T]$, let
	$$v_i(\cd)=u_i(\cd)-\Th_i(\cd)X_i^{0,0}(t),\qq i=1,2.$$
	Then, by Lemma \ref{6.5} and Lemma 2.3 in \cite{Sun-2016}, we have (noting $\xi_1+\xi_2=0$)
	\bel{repJT0*}\ba{ll}
	\ns\ds  J^0(s,0,0;u_1(\cd),u_2(\cd))=J^0(s,0,0;\Th_1,\Th_2,v_1,v_2 )\\
	\ns\ds= \sum_{i=1}^2\[\dbE\int_s^T\(\lan \cR_i(P_1)v_i,v_i\ran+2\lan\big(\cS_i(P_i,P_1)+\cR_i(P_1) \Th_i \big)X_i^{0,0},v_i \ran\)dt\]\\
	\ns\ds=\sum_{i=1}^2\dbE\int_s^T\lan\cR_i(P_i)(u_i-\Th_i X_i^{0,0}),u_i-\Th_i X_i^{0,0}\ran dt\ges\delta\g\dbE\int_s^T\big[|u_1|^2+|u_2|^2\big]dt\ea\ee	
	for some $ \gamma>0$. This proves the uniform convexity of $J^0(s,0,0;u_1(\cd),u_2(\cd))$, namely, (H4.1).
	
	\ms
	
	{\it Necessity.} Suppose (H4.1) holds. We consider the following sequence of BSDREs parameterized by $k=1,2,\cds$,

	\bel{dP-k}\left\{\2n\ba{ll}
	\ns\ds dP_i^k(t)=-\G_i^{k-1}(P_i^k,P_1^k)dt+\L_i^{M,k}dM, \q t\in[s,T],\\
	\ns\ds P_i^k(T)=G_i,\ea\right.\ee
	where
	\bel{G_i^k}\G_i^{k-1}(P_i,P_1)=P_iA_i^{\Th_i^{k-1}}+(A_i^{\Th_i^{k-1}})^\top P_i+(C_i^{\Th_i^{k-1}})^\top P_1C_i^{\Th_i^{k-1}}+Q_i^{\Th_i^{k-1}},\ee
	with $Q_i^{\Th_i^{k-1}}$ being defined by \rf{notation-QRS} and
	\bel{6.36}\left\{\2n\ba{ll}
	\ns\ds\Th_i^0=0,\\
	\ns\ds\Th^k_ip:=-(R_i+D_i^\top  P_1^kD_i)^{-1}(B_i^\top P_i^k+D_i^\top P_1^kC_i+S_i)\equiv\cR_i(P_1^k)^{-1}\cS_i(P_i^k,P_1^k).\ea\right.\ee
	By Lemma \ref{6.5}, when $\Th_i^0=0$, \rf{dP-k} admits a unique predictable solution $(P_i^1,\L_i^{M,1})\in L^\i_{\dbF^M}(0,T;\dbS^{ n})\times L^2_{\dbF^M_-}(0,T;\dbS^{  n})$, and
	\bel{R+DPD1}\cR_i(P_1^1)\ges\d I,\qq P_i^1\ges-K_0 I,\qq\as,~\ae t\in[s,T] \ee
	with $\d,K_0>0$. Once $(P^{k-1}_i,\L_i^{M,k-1})$ ($i=1,2$) is determined, we clearly have
	$$\ba{ll}
	\ns\ds\Th^k_1=-\cR_1(P^k_1) ^{-1}\cS_1(P_1^k,P_1^k)\in L^\i_{\dbF^M}(s,T;\dbR^{m\times n})^\perp,\\
	\ns\ds\Th^k_2=-\cR_2(P^k_1) ^{-1}\cS_2(P_2^k,P_1^k)\in L^\i_{\dbF^M}(s,T;\dbR^{m\times n}),\ea\qq k\ges1.$$
	Using Lemma \ref{6.5}, by induction, we can get a predictable solution $(P_i^k,\L_i^{M,k})$ to the BSDEs so that
	\bel{cR(P^k)>d}\cR_i( P^k_1)\ges\d I,\qq\as,~\ae t\in[s,T],\q k\ges1.\ee
	Next, we shall show the convergence of the sequence   $\{(P_i^k(\cd),\L^{M,k}_i(\cd),\Th_i^k(\cd))\}_{i\ges 1}$.
	To this end, we observe
	$$\ba{ll}
	\ns\ds\G_i^{k-1}(P_i^k,P_1^k)=P_i^kA_i^{\Th_i^{k-1}}+(A_i^{\Th_i^{k-1}})^\top P_i^k+(C_i^{\Th_i^{k-1}})^\top P_1^kC_i^{\Th_i^{k-1}}+Q_i^{\Th_i^{k-1}} \\
	\ns\ds=P_i^kA_i^{\Th_i^k}+(A_i^{\Th_i^k})^\top P_i^k+(C_i^{\Th_i^k})^\top P_1^kC_i^{\Th_i^k}+Q_i^{\Th_i^k}\\
	\ns\ds\q+\[\(P_i^kA_i^{\Th_i^{k-1}}+(A_i^{\Th_i^{k-1}})^\top P_i^k+(C_i^{\Th_i^{k-1}})^\top P_1^kC_i^{\Th_i^{k-1}}+Q_i^{\Th_i^{k-1}}\)\\
	\ns\ds\q-\(P_i^kA_i^{\Th_i^k}+(A_i^{\Th_i^k})^\top P_i^k+(C_i^{\Th_i^k})^\top P_1^kC_i^{\Th_i^k}+Q_i^{\Th_i^k}\)\]\\
	\ns\ds\equiv\G_i^k(P_i^k,P_1^k)+\Upsilon_i^k,\ea$$
	where
	$$\ba{ll}
	\ns\ds\Upsilon_i^k\equiv\G_1^{k-1}(P_i^k,P_1^k)-\G_i^k(P_i^k,P_1^k)=\(P_i^kA_i^{\Th_i^{k-1}}+(A_i^{\Th_i^{k-1}})^\top P_i^k+(C_i^{\Th_i^{k-1}})^\top P_1^kC_i^{\Th_i^{k-1}}+Q_i^{\Th_i^{k-1}}\)\\
	\ns\ds\q-\(P_i^kA_i^{\Th_i^k}+(A_i^{\Th_i^k})^\top P_i^k+(C_i^{\Th_i^k})^\top P_1^kC_i^{\Th_i^k}+Q_i^{\Th_i^k}\)\\
	\ns\ds=(\Th_i^{k-1})^\top(B_i^\top P_i^k+D_i^\top P_1^kC_i+S_i)
	+(P_i^kB_i+C_i^\top P_i^kD_i+S_i^\top)\Th_i^{k-1}+(\Th_1^{k-1})^\top(R_i
	+D_i^\top P_1^kD_i)\Th_i^{k-1}\\
	\ns\ds\q-\Big[(\Th_i^k)^\top(B_i^\top P_i^k+D_i^\top
	P_1^kC_i+S_i)+(P_i^kB_i+C_i^\top P_1^kD_i+S_i^\top)
	\Th_i^k+(\Th_i^k)^\top(R_i+D_i^\top P_1^kD_i)
	\Th_i^k\Big]\\
	\ns\ds\equiv(\Th^{k-1}_i)^\top\cS_i+\cS_i^\top\Th_i^{k-1}
	+(\Th_i^{k-1})^\top\cR_i\Th_i^{k-1}-\((\Th^k_i)^\top\cS_i+\cS_i^\top\Th_i^k
	+(\Th_i^k)^\top\cR_i\Th_i^k\)\\
	\ns\ds=(\Th_i^{k-1}+\cR_i^{-1}\cS_i)^\top\cR_i(\Th_i^{k-1}+\cR_1^{-1}\cS_1)-
	(\Th_i^k+\cR_i^{-1}\cS_i)^\top\cR_i(\Th_i^k+\cR_1^{-1}\cS_1)\\
	\ns\ds=(\Th_i^{k-1}+\cR_i^{-1}\cS_i)^\top\cR_i(\Th_i^{k-1}+\cR_1^{-1}\cS_1)\ges0.\ea$$
	Here, we have used the definition of $\Th^k_i$ (see \rf{6.36}). We thus have proved
	$$\G_i^{k-1}(P_i^k,P_1^k)\ges\G_i^k(P_i^k,P_1^k).$$
	Now, comparing \rf{dP-k} with the following
	\bel{dP-k+1}\left\{\2n\ba{ll}
	\ns\ds dP_i^{k+1}(t)=-\G_i^k(P_i^{k+1},P_1^{k+1})dt+\L_i^{M,k+1}dM, \q t\in[s,T],\\
	\ns\ds P_i^{k+1}(T)=G_i,\ea\right.\ee
	Since
	$$\ba{ll}
	\ns\ds\G_i^{k-1}(P_i^k,P_1^k)-\G_i^k(P_i^{k+1},P_1^{k+1})=\Upsilon_i^k+\G_i^k(P_i^k,P_1^k)\
	-\G_i^k(P_i^{k+1},P_1^{k+1})\\
	\ns\ds=\Upsilon_i^k+(P_i^k-P_i^{k+1})A_i^{\Th_i^k}+(A_i^{\Th_i^k})^\top (P_i^k-P_i^{k+1})+(C_i^{\Th_i^k})^\top(P_1^k-P_i^{k+1})
	C_i^{\Th_i^k},\ea$$
	we see that $\h P_i^k=P_i^k-P_i^{k+1}$, $\h\L_i^{M,k}=\L_i^{M,k}
	-\L_i^{M,k+1}$ satisfies the following linear BSDE:
	$$\left\{\2n\ba{ll}
	\ns\ds d\h P_i^k=-\(\h P_i^kA_i^{\Th_i^k}+(A_i^{\Th_i^k})^\top\h P_i^k+(C_i^{\Th_i^k})^\top\h P_1^kC_i^{\Th_i^k}+\Upsilon_i^k\)dt+\h\L_i^{M,k},\\
	\ns\ds\h P_i^k(T)=0,\ea\right.$$
	with $\Upsilon_i^k\ges0$. Then we obtain
	$$P_i^k(t)-P_i^{k+1}(t)=\h P_i^k(t)\ges0,\qq k\ges1.$$
	Hence, one has
	$$-K_0I\les P_i^k(t)\les P_i^1(t),\qq k\ges1.$$
	Since $P_i^1(\cd)$ is uniformly bounded, so are the whole sequences $\{P_i^k(\cd)\}_{k\ges1}$ ($i=1,2$). Then by their monotonicity and the dominated convergence theorem, we have the convergence of $P_i^k(\cd)$ to some $P_i(\cd)$ almost surely, almost everywhere on $\O\times[s,T]$ and in the space $L^2_{\dbF^M}(s,T;\dbS^n)$.
	On the other hand, we have $\cR_i(P_1^k)\ges\d I$ uniformly for every $k\ges1$. Thus, by the convergence of $P_i^k(\cd)$ and the definition of $\Th_i^k(\cd)$, we must have
	$$\lim_{k\to\i}\Th_i^k(t)=\Th_i(t)=-\cR_1(P_i(t))^{-1}\cS_i(P_i(t),P_1(t)).$$
	Clearly $(\Th_1(\cd),\Th_2(\cd))\in\hTh[s,T]$. Finally, it is routine that
	$$\ba{ll}
	\ns\ds\dbE\int_s^T|\L_i^{M,k}(t)-\L_i^{M.k'}(t)|^2d\lan M\ran(t)=\dbE\Big|\int_s^T\(\L_i^{M,k}(t)-\L_i^{M,k'}(t)\)dM\Big|^2\\
	\ns\ds=\dbE\Big|\int_s^T\(\G_i^{k-1}(P_i^k,P_1^k)-\G_i^{k'-1}(P_i^{k'},P_1^{k'})\)dt\Big|^2.\ea$$
	The right-hand side goes to zero as $k,k'\to\i$ since $P_i^k(\cd)$ is Cauchy in the right space. Therefore, $\L_i^{M,k}(\cd)$ converges to some $\L_i^M(\cd)$. The proof is complete. \endpf
	
	\ms
	
	We now present the following corollary.

	\bc{convexclos}\sl Let {\rm(H2.1)} and {\rm(H4.1)} hold. Then, Problem {\rm (MF-LQ)} is closed-loop solvable.
	
	\ec
	
	\it Proof. \rm Under our assumptions, according to Lemma \ref{7.2}, we know that \rf{Riccati2} admits a unique strongly regular solution $(P_1(\cd),\L^M_1(\cd)),$ $(P_2(\cd),\L^M_2(\cd))$ such that \eqref{R+DPD} holds, which implies (i), (ii)  in Theorem \ref{mainthm1} is true. Therefore,  Problem {\rm (MF-LQ)}$_\dbT$ is closed-loop solvable. \endpf

	\section{Concluding Remarks}
	
	In the paper, we have studied linear quadratic optimal control problems for a mean-field stochastic differential equations whose coefficients are adapted to another independent martingale. To deal with the mean-field terms involved, an orthogonal projection is introduced which leads to a new linear optimal control problem on the product of two orthogonal spaces. We begin with the classical approach of completing the square to the LQ problem, which leads to the backward stochastic differential Riccati equation (BSDRE) formally. However, in doing this, it is vague about the relationship among the three key notions: The existence of optimal control, the Pontryagin type maximum principle, and the Riccati equation. Inspired by the works of Sun--Yong (\cite{Sun-Yong-2020a, Sun-Yong-2020b}, which discussed the constant coefficient cases), we look at the open-loop and closed-loop solvability of Problem (MF-LQ) and fully characterize them. Finally, we end up with the result that the uniform convexity of the cost functional implies both the open-loop and closed-loop solvability of Problem (MF-LQ).
	
	\appendix
	
	\section{Some Lemmas}\label{AB}
	
	\bl{infty=t}\sl If $\xi$ is $\cF_s$-measurable and integrable, then
	\bel{E=E}\dbE^M_s[\xi]=\dbE^M_t[\xi],\qq\forall t\ges s.\ee
	Consequently,
	\bel{E=E*}\dbE_s^M[\xi]=\dbE^M[\xi].\ee
\end{lemma}

\it Proof. \rm Set $t\ges s$. Since $\dbF^W$ and $\dbF^M$ are independent, for any  $\G\in\cF^M_t$, we have
\bel{WMin}\dbE[{\bf1}_\G|\cF_s]\equiv\dbE[{\bf1}_\G|\cF_s^M\vee\cF_s^W]=\dbE[{\bf1}_\G|\cF_s^M].\eel
Then noting that $\dbE[{\bf1}_\G|\cF_s^M]$ is $\cF_s^M$-measurable and by setting $\eta=\dbE[\xi|\cF_s^M]$, we have
$$\ba{ll}
\ns\ds\dbE\[\eta{\bf1}_\G\]=\dbE\[\dbE[\xi|\cF_s^M]{\bf1}_\G\]
=\dbE\[\dbE\(\dbE[\xi|\cF_s^M]{\bf1}_\G\bigm|\cF_s^M\)\]\\
\ns\ds=\dbE\[\dbE[\xi|\cF_s^M]\dbE[{\bf1}_\G|\cF_s^M]\]=\dbE\[\dbE\(\xi\dbE[{\bf1}_\G|
\cF_s^M]\bigm|\cF_s^M\)\]=\dbE\[\xi\dbE[{\bf1}_\G|\cF_s^M]\]\qq \text{(by \eqref{WMin}})\\
\ns\ds=\dbE\[\xi\dbE[{\bf1}_\G|\cF_s]\]=\dbE\[\dbE[\xi{\bf1}_\G|\cF_s]\]=\dbE[\xi {\bf1}_\G].\ea$$
Since $\eta$ is $\cF_s^M$-measurable as well, which is also $\cF^M_t$-measurable, by the definition of conditional expectation, we have $\eta=\dbE^M_t[\xi]$. Then \rf{E=E} holds and \rf{E=E*} also follows. \endpf

\begin{lemma}\label{allowito}\sl
	Suppose that  $$(Y,\dot Y,\xi)\in L_{\dbF}^2(0,T;\dbR^{k})\times L_{\dbF}^2(0,T;\dbR^{k})\times L_{\dbF}^2(0,T;\dbR^{k}),$$
	$$( P,\dot{P},\zeta)\in L_{\dbF^M}^\infty(0,T;\dbR^{k\times k})\times L_{\dbF^M}^\infty(0,T;\dbR^{k\times k})\times  L_{\dbF^M_-}^2(0,T;\dbR^{k\times k})$$
	and $$(\wt Y,\dot{\wt Y},\wt\xi,\wt\z)\in L_{\dbF}^2(0,T;\dbR^{k})\times L_{\dbF}^2(0,T;\dbR^{k})\times L_{\dbF}^2(0,T;\dbR^{k})\times L_{\dbF_-}^2(0,T;\dbR^{k})$$
	satisfy
	$$dY=\dot Y dt+\xi dW_t, ~dP=\dot {P} dt+\z dM_t \text{  and  }d\wt Y=\dot {\wt Y} dt+\wt\xi dW_t+\wt\z dM_t.$$ Then  for $t\in[s,T]$ a.e., we have
	\bel{itoRC}\dbE \lan Y(t),\wt Y(t)\ran-\dbE \lan Y(s),\wt Y(s)\ran=\dbE\int_s^t\lan\dot Y,\wt Y\ran+ \lan Y,\dot{\wt Y}\ran+\lan\BBxi,\wt\xi\ran dr,\eel
	and
	\bel{itoRC-2}\dbE\lan P(t)Y(t),Y(t)\ran-\dbE\lan P(s)Y(s),Y(s)\ran=\dbE\int_s^t\lan\dot PY,Y\ran+\lan P\dot Y,Y\ran+\lan PY,\dot Y\ran+\lan P\xi,\xi\ran dr.\ee
\end{lemma}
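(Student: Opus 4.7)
The plan is to apply the standard It\^o product formula for c\`adl\`ag semimartingales and then take expectations, exploiting the independence of $W$ and $M$ to annihilate all the relevant cross variations. The two identities \rf{itoRC} and \rf{itoRC-2} are then established in parallel.

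For \rf{itoRC}, I would compute
$$d\lan Y,\wt Y\ran=\lan dY,\wt Y\ran+\lan Y,d\wt Y\ran+d[Y,\wt Y].$$
The covariation $[Y,\wt Y]$ picks up contributions only from the two $dW$-integrals, because the $dM$-integral appearing in $d\wt Y$ has zero covariation with every $dW$-integral (here one uses $\lan M,W\ran\equiv0$, which follows from independence of $\dbF^M$ and $\dbF^W$ and the continuity of $W$). Hence $d[Y,\wt Y]=\lan\xi,\wt\xi\ran\,dr$; integrating from $s$ to $t$ and taking expectation wipes out the martingale parts $\int\wt Y^\top\xi\,dW$, $\int Y^\top\wt\xi\,dW$, and $\int Y^\top\wt\z\,dM$, yielding \rf{itoRC}.

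For \rf{itoRC-2}, I would apply It\^o's formula entry-wise to the triple product $P_{ij}Y_iY_j$ via
$$d(ABC)=AB\,dC+AC\,dB+BC\,dA+C\,d[A,B]+B\,d[A,C]+A\,d[B,C].$$
Each cross variation $[P_{ij},Y_k]$ vanishes, again because the martingale part of $P$ is driven by $M$ while that of $Y$ is driven by $W$ and $[M,W]=0$; only $d[Y_i,Y_j]=\xi_i\xi_j\,dr$ survives. Summing over $i,j$ and re-collecting, the $dt$-part of $d\lan PY,Y\ran$ is precisely $\lan\dot PY,Y\ran+\lan PY,\dot Y\ran+\lan P\dot Y,Y\ran+\lan P\xi,\xi\ran$, matching \rf{itoRC-2}, while the $dW$- and $dM$-martingale pieces vanish under expectation.

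The main obstacle is soft: one must justify that the stochastic integrals $\int Y^\top\z Y\,dM$ and $\int Y^\top P\xi\,dW$ are true martingales, not merely local ones, so that their expectations vanish. Since $P,\dot P$ are only bounded and $Y,\xi,\z$ carry just $L^2$-integrability, a direct application of expectation is not immediately licit. The remedy is routine localization: set $\t_n=\inf\{t\ges s:|Y(t)|\ges n\}\wedge T$, establish the identities on $[s,t\wedge\t_n]$ where all the integrands become bounded and the stochastic integrals are genuinely square-integrable, and then pass to the limit as $n\to\i$ by dominated convergence, using $\dbE\sup_{s\les r\les T}|Y(r)|^2<\i$, which follows from the SDE representation $dY=\dot Y\,dr+\xi\,dW$ together with Doob's maximal and BDG inequalities.
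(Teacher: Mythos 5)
Your argument is correct and in essence the same as the paper's: both proofs rest on the It\^o product rule, the vanishing of the covariation $[W,M]$ by independence of the two filtrations (with $W$ continuous), and a truncation step that upgrades the local martingales to true martingales before expectations are taken. The only real difference is the truncation device: the paper composes $Y$ with a bounded $C^2$ function $f_\l$ equal to the identity on $[-\l,\l]$ and lets $\l\to\i$, whereas you localize in time with stopping times; the two are interchangeable here and buy the same thing. One small repair to your localization: the stopping time $\t_n=\inf\{t\ges s:|Y(t)|\ges n\}\wedge T$ controls only $|Y|$, so the integrand $\wt Y^\top\xi$ of the $dW$-integral coming from $dY$ is \emph{not} bounded on $[s,\t_n]$. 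Either stop on $|Y(t)|+|\wt Y(t)|\ges n$ instead, or observe directly that $\dbE\sup_{s\les r\les T}|\wt Y(r)|^2<\i$ (by the same Doob/BDG argument you invoke for $Y$) combined with the BDG inequality in $L^1$ makes $\int\wt Y^\top\xi\,dW$ a true martingale without any stopping. With that adjustment the proof is complete.
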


\it Proof. \rm Without loss of generality, we only show the case for $k=1$.
Let $f_\l:\dbR\mapsto \dbR $ be twice continuously differentiable  such that
$f_\l(y)=y$ for $|y|\leq \l$, $f_\l(y)=\l+1$ for $|y|\geq \l+1$ and
$|f'_\l(\cdot)|,~|f''(\cdot)|$ are bounded uniformly in $\l$. Then applying It\^o's formula on $t\mapsto \lan f_\l(Y_t),\wt Y_t\ran$, we have
$$\ba{ll}\ad d\lan f_\l(Y(t)),\wt Y(t)\ran= [f_\l'(Y)\dot Y \wt Y+f(Y)\dot{\wt Y}+\frac12 f_\l''(Y)\xi^2+f_\l'(Y)\xi\wt\xi]dt\\
\ns\ad\qq+f_\l(Y(t))\wt\z(t) dM(t)+[f_\l'(Y(t))\xi(t)Y(t)+f_\l(Y(t))\wt\xi(t)]dW(t). \ea $$
By the selection of $f_\l$, all the local martingales in above equation turn out be martingales. Therefore we have
$$\dbE \lan f_\l(Y(t)),\wt Y(t)\ran-\dbE \lan f_\l(Y(s)),\wt Y(s)\ran=\dbE\int_s^t[f_\l'(Y)\dot Y \wt Y+f(Y)\dot{\wt Y}+\frac12 f_\l''(Y)\xi^2+f_\l'(Y)\xi\wt\xi]dr.$$
Letting $\l\rightarrow\infty$, by dominant convergence theorem, \eqref{itoRC} holds. The proof for \eqref{itoRC-2} is similar.
\endpf

\section{Some Lengthy Calculations}\label{ABC}

In this appendix, we would like to carry out some lengthy and routine calculations, for the convenience of the readers. To be general enough, let us take
\bel{B-u=ThX+v}\left\{\2n\ba{ll}
\ds u_1(t)=\Th_1(t)X_1(t)+v_1(t),\\
\ns\ds u_2(t)=\Th_2(t)X_2(t)+v_2(t),\ea\right.\qq t\in[s,T],\ee
where
$$\Th_i(\cd)\in L^\i_{\dbF^M}(s,T;\dbR^{m\times n}),\q v_1(\cd)\in L^2_{\dbF^M}(s,T;\dbR^m)^\perp,\q v_2(\cd)\in L^2_{\dbF^M}(s,T;\dbR^m).$$
Under \rf{B-u=ThX+v}, our state equation \rf{SDE1} reads
\bel{B-closed1}\ba{ll}
\ns\ds dX=\Big\{A(X_1+X_2)+\bar AX_2+B(\Th_1X_1+\Th_2X_2+v_1+v_2)+\bar B(\Th_2X_2+v_2)+b\Big\}dt\\
\ns\ds\qq\q+\Big\{C(X_1+X_2)+\bar CX_2+D(\Th_1X_1+\Th_2X_2+v_1+v_2)+\bar D(\Th_2X_2+v_2)+\si\Big\}dW(t)\\
\ns\ds\qq=\Big\{(A+B\Th_1)X_1+[A+\bar A+(B+\bar B)\Th_2]X_2+
Bv_1+(B+\bar B)v_2+b\Big\}dt\\
\ns\ds\qq\q+\Big\{(C+D\Th_1)X_1+[C+\bar C+(D+\bar D)\Th_2]X_2+Dv_1+(D+\bar D)v_2+\si\Big\}dW(t)\\
\ns\ds\qq=\(A_1^{\Th_1}X_1\1n+\1n A_2^{\Th_2}X_2\1n+\1n
B_1v_1+B_2v_2+b\)dt\1n+\1n\(C_1^{\Th_1}X_1+C_2^{\Th_2}X_2+D_1v_1+D_2v_2+\si
\)dW(t),\ea\ee
where the initial state $X(s)=\xi$, and
\bel{B-A_1}\left\{\2n\ba{ll}
\ns\ds A_1(t)=A(t),\q A_2(t)=A(t)+\bar A(t),\q C_1(t)=C(t),\q C_2(t)=C(t)+\bar C(t),\\
\ns\ds B_1(t)=B(t),\q B_2(t)=B(t)+\bar B(t),\q D_1(t)=D(t),\q D_2(t)=D(t)+\bar D(t),\ea\right.\ee
and
\bel{B-A^Th}A_i^{\Th_1}(t)=A_i(t)+B_i(t)\Th_i(t),\q C_i^{\Th_1}(t)=C_i(t)+D_i(t)\Th_i(t),\q i=1,2.\ee
Applying $\Pi\equiv\Pi_2$ to \rf{B-closed1}, we can get an equation for $X_2$. Then by noticing $X_1=X-X_2$, we obtain an equation for $X_1$. Thus, the state equation can be equivalently written as:
\bel{B-closed3}\left\{\2n\ba{ll}
\ns\ds dX_1=(A_1^{\Th_1}X_1+B_1v_1+b_1)dt+(C_1^{\Th_1}X_1+C_2^{\Th_2}X_2+D_1v_1
+D_2v_2+\si)dW(t),\\
\ns\ds dX_2=(A_2^{\Th_2}X_2+B_2v_2+b_2)dt.\ea\right.\ee
Correspondingly, the cost functional reads
\bel{B-cost2}\ba{ll}
\ns\ds J(s,\xi;\BTh(\cd),v(\cd)):=J(s,\xi;\Th_1(\cd)X_1(\cd)+\Th_2(\cd)X_2(\cd)+v(\cd))\\
\ns\ds=\frac12\dbE\Big\{\int_s^T\[\lan QX,X\ran+2\lan SX,\Th_1X_1+\Th_2X_2+v\ran+\lan R(\Th_1X_1+\Th_2X_2+v),\Th_1X_1+\Th_2X_2+v\ran\\
\ns\ds\qq\qq+\lan\bar QX_2,X_2\ran+2\lan\bar SX_2,\Th_2X_2+v_2\ran+\lan\bar R(\Th_2X_2+v_2),\Th_2X_2+v_2\ran\\
\ns\ds\qq\qq+2\lan q,X\ran+2\lan\bar q,X_2\ran+2\lan r,\Th_1X_1+\Th_2X_2+v\ran+2\lan\bar r,\Th_2X_2+v_2\ran\]dt\\
\ns\ds\qq\qq+\lan GX(T),X(T)\ran+\lan\bar GX_2(T),X_2(T)\ran+2\lan g,X(T)\ran+2\lan\bar g,X_2(T)\ran\Big\}\\
\ns\ds=\frac12\dbE\Big\{\int_s^T\[\lan QX_1,X_1\ran+\lan QX_2,X_2\ran+2\lan SX_1,\Th_1X_1+v_1\ran+2\lan SX_2,\Th_2X_2+v_2\ran\\
\ns\ds\qq\qq+\lan R(\Th_1X_1+v_1),\Th_1X_1+v_1\ran+\lan R(\Th_2X_2+v_2),\Th_2X_2+v_2\ran+\lan\bar QX_2,X_2\ran\\
\ns\ds\qq\qq+2\lan\bar SX_2,\Th_2X_2+v_2\ran+\lan\bar R(\Th_2X_2+v_2),\Th_2X_2+v_2\ran+2\lan q_1,X_1\ran+2\lan\Pi_2[q],X_2\ran\\
\ns\ds\qq\qq+2\lan\bar q,X_2\ran+2\lan r_1,\Th_1X_1+v_1\ran+2\lan \Pi_2[r],\Th_2X_2+v_2\ran+2\lan\bar r,\Th_2X_2+v_2\ran\]dt\\
\ns\ds\qq\qq+\lan GX_1(T),X_1(T)\ran+\lan(G+\bar G)X_2(T),X_2(T)\ran+2\lan g_1,X_1(T)\ran+2\lan\Pi_2[g]+\bar g,X_2(T)\ran\Big\}\\
\ns\ds=\frac12\dbE\Big\{\int_s^T\[\lan(Q+\Th_1^\top S+S^\top\Th_1+\Th_1^\top R\Th_1)X_1,X_1\ran+2\lan(S+R\Th_1)X_1,v_1\ran+\lan Rv_1,v_1\ran\\
\ns\ds\qq\qq+2\lan q_1+\Th_1^\top r_1,X_1\ran+2\lan r_1,v_1\ran+2\lan\Pi_2[q]+\bar q+\Th_2^\top(\Pi_2[r]+\bar r),X_2\ran+2\lan\Pi_2[r]+\bar r,v_2\ran\\
\ns\ds\qq\qq+\lan[Q+\bar Q+\Th_2^\top(S+\bar S)+(S+\bar S)^\top\Th_2+\Th_2^\top(R+\bar R)\Th_2]X_2,X_2\ran\\
\ns\ds\qq\qq+2\lan[(S+\bar S)+(R+\bar R)\Th_2]X_2,v_2\ran+\lan(R+\bar R)v_2,v_2\ran\]dt\\
\ns\ds\qq\qq+\lan GX_1(T),X_1(T)\ran+\lan(G+\bar G)X_2(T),X_2(T)\ran+2\lan g,X_1(T)\ran+2\lan \Pi_2[g]+\bar g,X_2(T)\ran\Big\}\\
\ns\ds=\frac12\dbE\Big\{\int_s^T\[\lan Q_1^{\Th_1}X_1,X_1\ran+2\lan S_1^{\Th_1} X_1,v_1\ran+\lan R_1v_1,v_1\ran+2\lan q_1^{\Th_1},X_1\ran+2\lan r_1,v_1\ran\\
\ns\ds\qq\qq\qq+\lan Q_2^{\Th_2}X_2,X_2\ran+2\lan S_2^{\Th_2}X_2,v_2\ran+\lan R_2v_2,v_2\ran+2\lan q_2^{\Th_2},X_2\ran+2\lan r_2,v_2\ran\]dt\\
\ns\ds\qq\qq\qq+\lan G_1X_1(T),X_1(T)\ran+\!2\lan g_1,X_1(T)\ran+\lan G_2X_2(T),X_2(T)\ran+2\lan g_2,X_2(T)\ran\Big\},\ea\ee
where
\bel{B-QRS}\ba{ll}
\ns\ds Q_1(t)=Q(t),\q
Q_2(t)=Q(t)+\bar Q(t),\q S_1(t)=S(t),\q S_2(t)=S(t)+\bar S(t),\\
\ns\ds R_1(t)=R(t),\qq R_2(t)=R(t)+\bar R(t),\\
\ns\ds q_1(t)=\Pi_1[q](t),\q q_2(t)=\Pi_2[q](t)+\bar q(t),\q r_1(t)=\Pi_1[r](t),\q r_2(t)=\Pi_2[r](t)+\bar r(t),\\
\ns\ds G_1=G,\q G_2=G+\bar G,\q g_1=\Pi_1[g],\q g_2=\Pi_2[g]+\bar g,\ea\ee
and
\bel{B-Q^Th}\ba{ll}
\ns\ds Q_i^{\Th_i}(t)=Q_i(t)+\Th_i(t)^\top S_i(t)+S_i(t)^\top\Th_i(t)+\Th_i(t)^\top R_i(t)\Th_i(t),\\
\ns\ds S_i^{\Th_i}(t)=S_i(t)+R_i(t)\Th_i(t),\qq q_i^{\Th_i}(t)=q_i(t)+\Th_i(t)^\top r_i(t).\ea\qq i=1,2.\ee
In particular, for the homogenous case, one has
\bel{B-J^0(Th)}\ba{ll}
\ns\ds J^0(s,\xi;\BTh(\cd),v(\cd))=\frac12\dbE\Big\{\int_s^T\[\lan Q_1^{\Th_1}X_1,X_1\ran+2\lan S_1^{\Th_1} X_1,v_1\ran+\lan R_1v_1,v_1\ran\\
\ns\ds\qq\qq\qq\qq\qq\qq+\lan Q_2^{\Th_2}X_2,X_2\ran+2\lan S_2^{\Th_2}X_2,v_2\ran+\lan R_2v_2,v_2\ran\]dt\\
\ns\ds\qq\qq\qq\qq\qq\qq+\lan G_1X_1(T),X_1(T)\ran+\lan G_2X_2(T),X_2(T)\ran\Big\}.\ea\ee
Note that by taking $\Th_1(\cd)=\Th_2(\cd)=0$, we recover the original state equation and cost functional (with $u_i(\cd)=v_i(\cd)$).

Next, let $(P_i(\cd),\L^M_i(\cd))$ be the predictable solution to the following BSDEs:
\bel{B-P_i}\left\{\2n\ba{ll}
\ds dP_i(t)=\G_i(t)dt+\L_i^M(t)dM(t),\\
\ns\ds P_i(T)=G_i,\ea\right.\ee
for some undetermined $\G_1(\cd),\G_2(\cd)\in L^2_{\dbF^M}(s,T;\dbS^n)$. Let $(\eta_1(\cd),\z_1(\cd),\z_1^M(\cd),\eta_2(\cd),\z_2^M(\cd))$ be the predictable solution to the following:
\bel{B-eta_1}\left\{\2n\ba{ll}
\ds d\eta_1(t)=\g_1(t)dt+\z_1(t)dW(t)+\z_1^M(t)dM(t),\\
\ns\ds d\eta_2(t)=\g_2(t)dt+\z_2^M(t)dM(t),\\
\ns\ds\eta_1(T)=g_1,\qq\eta_2(T)=g_2,\ea\right.\ee
for some undermined $\g_1(\cd)\in L^2_{\dbF^M}(s,T;\dbR^n)^\perp$, and $\g_2(\cd)\in L^2_{\dbF^M}(s,T;\dbR^n)$. Then, by It\^o's formula  (suppressing $t$),
$$\ba{ll}
\ns\ds d(P_1X_1)=\[\G_1X_1+P_1(A_1X_1+B_1u_1+b_1)\]dt+P_1(C_1X_1+C_2X_2+D_1u_1+D_2u_2+\si_1+\si_2)dW\\
\ns\ds\qq\qq\qq+\L_1^MX_1dM.\ea$$
Thus,
$$\ba{ll}
\ns\ds d\lan P_1X_1,X_1\ran=\[\lan\G_1X_1+P_1(A_1X_1+B_1u_1+b_1),X_1\ran+\lan P_1X_1,A_1X_1+B_1u_1+b_1\ran\\
\ns\ds\qq\qq+\lan P_1(C_1X_1+C_2X_2+D_1u_1+D_2u_2+\si_1+\si_2),C_1X_1+C_2X_2+D_1u_1+D_2u_2+\si_1+\si_2\ran\]dt\\
\ns\ds\qq\qq+\{\cds\}dW+\{\cds\}dM,\ea$$
where $\{\cds\}$ stands for something which are irrelevant below. Also,
$$\ba{ll}
\ns\ds d\lan\eta_1,X_1\ran=\[\lan\g_1,X_1\ran+\lan\eta_1,A_1X_1+B_1u_1+b_1\ran+\lan\z_1,
C_1X_1+C_2X_2+D_1u_1+D_2u_2+\si_1+\si_2\ran\]dt\\
\ns\ds\qq\qq+\{\cds\}dW+\{\cds\}dM.\ea$$
Consequently,
$$\ba{ll}
\ns\ds\dbE\[\lan G_1X_1(T),X_1(T)\ran+2\lan g_1,X_1(T)\ran\]=\dbE\[\lan P_1(T)X_1(T),X_1(T)\ran+2\lan\eta_1(T),X_1(T)\ran\]\\
\ns\ds=\dbE\Big\{\lan P_1(s)\xi_1,\xi_1\ran+2\lan\eta_1(s),\xi_1\ran+\int_s^T\[\lan(\G_1+P_1A_1+A_1^\top P_1+C_1^\top P_1C_1)X_1,X_1\ran\\
\ns\ds\qq+\lan C_2^\top P_1C_2X_2,X_2\ran+2\lan(B_1^\top P_1+D_1^\top P_1C_1)X_1,u_1\ran
+2\lan D_2^\top P_1C_2X_2,u_2\ran\\
\ns\ds\qq+\lan D_1^\top P_1D_1u_1,u_1\ran+\lan D_2^\top P_1D_2u_2,u_2\ran\\
\ns\ds\qq+2\lan P_1b_1+C_1^\top P_1\si_1+\g_1+A_1^\top\eta_1+C_1^\top\Pi_1[\z_1],X_1\ran+2\lan C_2^\top P_1\si_2+C_2^\top\Pi_2[\z_1],X_2\ran\\
\ns\ds\qq+2\lan D_1^\top P_1\si_1+B_1^\top\eta_1+D_1^\top\Pi_1[\z_1],u_1\ran+2\lan D_2^\top P_1\si_2+D_2^\top\Pi_2[\z_1],u_2\ran\\
\ns\ds\qq+\lan P_1\si_1,\si_1\ran+\lan P_1\si_2,\si_2\ran+2\lan\eta_1,b_1\ran+2\lan\Pi_1[\z_1],\si_1\ran+2\lan\Pi_2[\z_1],\si_2\ran\]dt
\Big\}.\ea$$
Likewise, which is a little different,
$$\ba{ll}
\ns\ds\dbE\[\lan G_2X_2(T),X_2(T)\ran+2\lan g_2,X_2(T)\ran\]=\dbE\[\lan P_2(T)X_2(T),X_2(T)\ran+2\lan\eta_2(T),X_2(T)\ran\]\\
\ns\ds=\dbE\Big\{\lan P_2(s)\xi_2,\xi_2\ran+2\lan\eta_2(s),\xi_2\ran+\int_s^T\[\lan(\G_2+P_2A_2+A_2^\top P_2)X_2,X_2\ran\\
\ns\ds\qq+2\lan B_2^\top P_2X_2,u_2\ran+2\lan P_2b_2+\g_2+A_2^\top\eta_2,X_2\ran+2\lan B_2^\top\eta_2,u_2\ran+2\lan\eta_2,b_2\ran\]dt.\ea$$
Hence,
\bel{B-cost3}\ba{ll}
\ns\ds J(s,\xi_1,\xi_2;u_1(\cd),u_2(\cd))\equiv J(s,\xi;u(\cd))\\
\ns\ds=\frac12\sum_{i=1}^2\dbE\[\int_s^T\(\lan Q_iX_i,X_i\ran\! +\!2\lan S_iX_i, u_i\ran\!+\!\lan R_iu_i,u_i\ran+2\lan q_i,X_i\ran+2\lan r_i,u_i\ran\)dt\\
\ns\ds\qq\qq\qq\qq+\lan G_iX_i(T), X_i(T)\ran+ 2\lan g_i,X_i(T)\ran\]\\
\ns\ds=\frac12\sum_{i=1}^2\dbE\Big\{\lan P_i(s)\xi_i,\xi_i\ran+2\lan\eta_i(s),\xi_i\ran\\
\ns\ds\qq+\int_s^T\3n\[\lan(\G_i\1n+\1n P_iA_i\1n+\1n A_i^\top P_i\1n+\1n C_i^\top P_1C_i\1n+\1n Q_i)X_i,X_i\ran\1n+\1n2\lan(B_i^\top P_i\1n+\1n D_i^\top P_1C_i\1n+\1n S_i)X_i,u_i\ran\\
\ns\ds\qq+\lan(R_i+D_i^\top P_1D_i)u_i,u_i\ran+2\lan\g_i+A_i^\top\eta_i+C_i^\top\Pi_i[\z_1]+P_ib_i+C_i^\top P_1\si_i+q_i,X_i\ran\\
\ns\ds\qq+2\lan B_i^\top\eta_i+D_i^\top\Pi_i[\z_1]+D_i^\top P_1\si_i+r_i,u_i\ran
\\
\ns\ds\qq+2\lan\eta_i,b_i\ran+2\lan\Pi_i[\z_1],\si_i\ran
+\lan P_1\si_i,\si_i\ran\]dt\Big\}\\
\ns\ds=\frac12\sum_{i=1}^2\dbE\Big\{\lan P_i(s)\xi_i,\xi_i\ran+2\lan\eta_i(s),\xi_i\ran\\
\ns\ds\qq+\int_s^T\[\lan[\G_i+\cQ_i(P_i,P_1)]X_i,X_i\ran+2\lan\cS_i(P_i,P_1)X_i+\Br_i,u_i\ran
+\lan\cR_i(P_1)u_i,u_i\ran\\
\ns\ds\qq\qq\qq+2\lan\g_i+\Bq_i,X_i\ran+2\lan\eta_i,b_i\ran
+2\lan\Pi_i[\z_1],\si_i\ran+\lan P_1\si_i,\si_i\ran\]dt\Big\},\ea\ee
where
\bel{B-Bq}\left\{\2n\ba{ll}
\ns\ds\Bq_i=A_i^\top\eta_i+C_i^\top\Pi_i[\z_1]+P_ib_i+C_i^\top P_1\si_i+q_i,\\
\ns\ds\Br_i=B_i^\top\eta_i+D_i^\top\Pi_i[\z_1]+D_i^\top P_1\si_i+r_i.\ea\right.\qq i=1,2.\ee
Now, for any $\BTh(\cd)=(\Th_1(\cd).\Th_2(\cd))$, by the same calculation with $(A_i,C_i,Q_i,S_i)$ replaced by $(A_i^{\Th_i},C_i^{\Th_i},Q_i^{\Th_i},S_i^{\Th_i})$, and $u_i(\cd)$ replaced by $v_i(\cd)$, only for the homogeneous case (thus, $X_i$ becomes $X_i^0$), one has
\bel{cost1*a}\ba{ll}
\ns\ds J^0(s,\xi;\BTh(\cd),v(\cd))\\
\ns\ds=\frac12\sum_{i=1}^2\dbE\Big\{\lan P_i(s)\xi_i,\xi_i\ran+\2n\int_s^T\3n\[\lan(\G_i+P_iA_i^{\Th_i}+(A_i^{\Th_i})^\top P_i\1n+\1n(C_i^{\Th_i})^\top P_1C_i^{\Th_i}\1n+\1n Q_i^{\Th_i})X_i^0,X_i^0\ran\\
\ns\ds\qq+2\lan(B_i^\top P_i+D_i^\top P_1C_i^{\Th_i}+S_i^{\Th_i})X_i^0,v_i\ran+\lan(R_i+D_i^TP_1D_i)v_i,v_i\ran\]dt\Big\}.\ea\ee
Thus, if we choose
\bel{B-G=}\G_i=-\(P_iA_i^{\Th_i}+(A_i^{\Th_i})^\top P_i\1n+\1n(C_i^{\Th_i})^\top P_1C_i^{\Th_i}\1n+\1n Q_i^{\Th_i}\),\qq i=1,2,\ee
i.e., equation \rf{B-P_i} reads
\bel{B-P_i*}\left\{\2n\ba{ll}
\ds dP_i(t)=-\(P_iA_i^{\Th_i}+(A_i^{\Th_i})^\top P_i\1n+\1n(C_i^{\Th_i})^\top P_1C_i^{\Th_i}\1n+\1n Q_i^{\Th_i}\)dt+\L_i^M(t)dM(t),\\
\ns\ds P_i(T)=G_i,\ea\right.\ee
then
\bel{cost1*d}\ba{ll}
\ns\ds J^0(s,\xi;\BTh(\cd),v(\cd))\\
\ns\ds=\frac12\sum_{i=1}^2\dbE\Big\{\lan P_i(s)\xi_i,\xi_i\ran+2\int_s^T\lan(B_i^\top P_i+D_i^\top P_1C_i^{\Th_i}+S_i^{\Th_i})X_i^0,v_i\ran+\lan(R_i+D_i^\top P_1D_i)v_i,v_i\ran\]dt\Big\}\\
\ns\ds=\frac12\sum_{i=1}^2\dbE\Big\{\lan P_i(s)\xi_i,\xi_i\ran+2\int_s^T\lan[\cS_i(P_i)+\cR_i(P_1)\Th_i]X_i^0,v_i\ran+\lan\cR_i(P_1)v_i,v_i\ran\]dt\Big\}.\ea\ee
Further, if we can further achieve
\bel{B-BP}0=B_i^\top P_i+D_i^\top P_1C_i^{\Th_i}+S_i^{\Th_i}=\cS_i(P_1)+\cR_i(P_1)\Th_i,\qq i=1,2,\ee
then
\bel{B-J^0}J^0(s,\xi;\BTh(\cd),v(\cd))=\frac12\sum_{i=1}^2\dbE\Big\{\lan P_i(s)\xi_i,\xi_i\ran+\int_s^T\lan\cR_i(P_1)v_i,v_i\ran dt\Big\}.\ee
We see that \rf{B-BP} means
$$\ba{ll}
\ns\ds\sR\big(\cS_i(P_i)\big)\subseteq\sR\big(R_i(P_1)\big),\\
\ns\ds\Th_i=-\cR_i(P_1)^\dag\cS_i(P_i)+\big[I-\cR_i(P_1)^\dag\cR_i(P_1)\big]\Th_{0i},
\qq i=1,2.\ea$$
for some $\Th_{0i}$. This then implies
$$\ba{ll}
\ns\ds P_iA_i^{\Th_i}+(A_i^{\Th_i})^\top P_i\1n+\1n(C_i^{\Th_i})^\top P_1C_i^{\Th_i}\1n+\1n Q_i^{\Th_i}\\
\ns\ds=P_i(A_i+B_i\Th_i)+(A_i+B_i\Th_i)^\top P_i\1n+\1n(C_i+D_i\Th_i)^\top P_1(C_i+D_i\Th_i)\1n+\1n Q_i+\Th_i^\top S_i+S_i^\top\Th_i+\Th_i^\top R_i\Th_i\\
\ns\ds=P_iA_i\1n+\1n A_i^\top P_i\1n+\1n C_i^\top P_1C_i\1n+\1n Q_i\1n+\1n(P_iB_i\1n+\1n C_iP_1D_i\1n+\1n S_i^\top)\Th_i\1n+\1n\Th_i^\top(B_i^\top P_i\1n+\1n D_i^\top P_1C_i\1n+\1nS_i)\1n+\1n\Th_i^\top(R_i\1n+\1n D_i^\top P_1D_i)\Th_i\\
\ns\ds=\cQ_i(P_i)+\cS_i(P_i)^\top\Th_1+\Th_i^\top\cS_i(P_i)+\Th_i\cR_i(P_1)\Th_i
=\cQ_i(P_i)-\cS_i(P_i)\cR_i(P_1)^\dag\cS_i(P_i).\ea$$
Hence, \rf{B-P_i*} further becomes BSDRE \rf{Riccati1}. This means if $(P_i(\cd),\L_i^M(\cd))$ is a predictable solution of \rf{Riccati1} (with the range condition), then \rf{B-J^0} holds.

\end{document}